\documentclass[11pt]{article}
\usepackage{stmaryrd}
\usepackage{graphicx}
\usepackage{color}
\usepackage[title]{appendix}
\usepackage{amsmath, amsfonts,amscd,amssymb, mathtools,mathrsfs, dsfont}

\usepackage{pgfplots}
\usepackage{float}

\usepackage{url}
\usepackage{hyperref}

\usepackage{array}

\usepackage{setspace}

\allowdisplaybreaks[4]

\newif\ifblog
\newif\iftex
\blogfalse
\textrue

\newcommand{\thmref}[1]{Theorem~{\rm \ref{#1}}}
\newcommand{\lemref}[1]{Lemma~{\rm \ref{#1}}}

\newcommand{\propref}[1]{Proposition~{\rm \ref{#1}}}
\newcommand{\defref}[1]{Definition~{\rm \ref{#1}}}

\newcommand{\be}{\begin{eqnarray}}
\newcommand{\ee}{\end{eqnarray}}
\newcommand{\bee}{\begin{eqnarray*}}
\newcommand{\eee}{\end{eqnarray*}}

\newcommand{\cc}{\overline{c}}
\newcommand{\uc}{\underline{c}}

\def\d{{\:\rm d}}
\def\dt{\:{\rm d}t}
\def\ds{\:{\rm d}s}
\def\dx{\:{\rm d}x}
\def\dy{\:{\rm d}y}
\def\dz{\:{\rm d}z}
\def\P{{\mathbb P}}
\def\E{{\mathbb E}}

\def\R{{\mathbb R}}
\def\N{{\mathbb N}}

\def\R{{\mathbb R}}

\def\D{{\mathbb D}}
\def\Q{{\cal Q}}
\def\SS{{\cal S}}

\def\NS{{\cal NS}}

\def\p{{\partial}}

\newcommand{\cF}{{\cal F}}
\newcommand{\ep}{\varepsilon}

\newcommand{\nd}{\noindent}

\newcommand{\la}{\lambda}

\newcommand{\ga}{\gamma}

\renewcommand{\geq}{\geqslant}
\renewcommand{\leq}{\leqslant}
\renewcommand{\ge}{\geqslant}
\renewcommand{\le}{\leqslant}

\newcommand{\esssup}{\mathop{\rm ess\:sup}\limits}

\def\sw{\mathcal{X}}

\def\swb{\mathfrak{M}}

\newtheorem{theorem}{Theorem}[section]
\newtheorem{lemma}[theorem]{Lemma}
\newtheorem{definition}[theorem]{Definition}

\newtheorem{proposition}[theorem]{Proposition}

\newenvironment{proof}{\noindent{\sc Proof:}}{\strut\hfill $\Box$\medskip\\} 
\newenvironment{proofof}[1]{\noindent\hspace{0ex}\textbf{Proof of #1}\hspace{1ex}}{\hspace*{\fill}$\Box$\newline}

\newcommand{\LL}{{\cal L}}
\newcommand{\TT}{{\cal T}}
\newcommand{\JJ}{{\cal J}}
\newcommand{\KK}{{\cal K}}
\newcommand{\II}{{\cal I}}

\def\ov{\overline{v}}

\def\os{\overline{s}}

\def\Ds{\Delta s}
\def\Dc{\Delta c}
\def\A{{\cal A}}

\def\sw{\mathcal{X}}

\newcommand{\nn}{\nonumber}

\begin{document}
\title
{Dividend ratcheting and capital injection under the Cram\'er-Lundberg model: Strong solution and optimal strategy}

\author{Chonghu Guan\thanks{School of Mathematics, Jiaying University, Meizhou 514015, Guangdong, China. Email: \url{gchonghu@163.com} }
\and Zuo Quan Xu\thanks{Department of Applied Mathematics, The Hong Kong Polytechnic University, Kowloon, Hong Kong SAR, China. Email: \url{maxu@polyu.edu.hk} }
}
\date{}

\maketitle 
\begin{abstract}
We consider an optimal dividend payout problem for an insurance company whose surplus follows the classical Cram\'er-Lundberg model. The dividend rate is subject to a ratcheting constraint (i.e., it must be nondecreasing over time), and the company may inject capital at a proportional cost to avoid ruin. This problem gives rise to a stochastic control problem with a self-path-dependent control constraint, costly capital injections, and jump-diffusion dynamics. The associated Hamilton-Jacobi-Bellman (HJB) equation is a partial integro-differential variational inequality featuring both a nonlocal integral term and a gradient constraint.

We develop a systematic probabilistic and PDE-based approach to solve this HJB equation. By discretizing the space of admissible dividend rates, we construct a sequence of approximating regime-switching systems of ordinary integro-differential equations. Through careful a priori estimates and a limiting argument, we prove the existence and uniqueness of a \emph{strong solution} in a suitable space. This regularity result is fundamental: it allows us to characterize the optimal dividend policy via a switching free boundary and to construct an explicit optimal feedback control strategy. To the best of our knowledge, this is the first complete solution --- comprising both the value function and an implementable optimal strategy --- for a dividend ratcheting problem with capital injection under the Cram\'er-Lundberg model. Our work advances the mathematical theory of optimal stochastic control beyond the standard viscosity solution framework, providing a rigorous foundation for dividend policy design in economics.

\bigskip
\nd {\bf Keywords.} Dividend ratcheting, capital injection, Cram\'er-Lundberg model, free boundary problem, variational inequality, integro-differential equation.

\bigskip
\nd {\bf 2010 Mathematics Subject Classification.} 35R35; 35Q93; 91G10; 91G30; 93E20; 45K05.

\end{abstract}

\tableofcontents

\section{Introduction}
The optimal dividend payout problem has been a cornerstone of actuarial science and financial mathematics since the seminal works of De Finetti \cite{DeFinetti1957} and Gerber \cite{Gerber1969}. At its core, the problem seeks a dynamic strategy for distributing surplus to shareholders that maximizes the expected present value of future dividends, balancing the immediate reward against the risk of premature bankruptcy. The literature has extensively explored various facets of this problem, including different surplus processes (e.g., compound Poisson \cite{GerberShiu2006}, Brownian motion \cite{AsmussenTaksar1997,Ta00,GerberShiu2004}), control types (e.g., impulse, barrier), and operational constraints.

A more recent and practically relevant line of inquiry incorporates path-dependent constraints on the dividend policy itself. Notably, the \emph{ratcheting constraint} requires the dividend payout rate to be non-decreasing over time, reflecting the managerial reluctance or contractual inability to cut dividends. This problem, under Brownian motion setting, was tackled by Albrecher, Azcue, and Muler \cite{Albrecher2020,Albrecher2022} using viscosity solution theory, and later resolved completely by Guan and Xu \cite{GuanXu2024} using a novel PDE method that yielded a strong solution and an explicit optimal feedback strategy. This PDE approach was further extended to incorporate a more flexible \emph{drawdown constraint} by Guan, Fan, and Xu \cite{GuanFanXu}, where the dividend rate is allowed to decrease but not below a fixed proportion of its historical maximum. That work resulted in a strong solution for a nonlinear HJB variational inequality.

In this paper, we build upon this established PDE framework to study a more comprehensive and realistic model. We consider an insurance company whose surplus process is governed by the classical \emph{Cram\'er-Lundberg model}, which incorporates both a deterministic income stream and random losses from a compound Poisson process. This is a more accurate representation of insurance risk than the pure diffusion models used in our previous works. Furthermore, to ensure the company's solvency and prevent ruin, we allow for costly \emph{capital injections}. This feature allows the surplus to be replenished from external sources (e.g., issuing new equity) at a premium cost, making the model more applicable in practice.

The literature on optimal dividend problems with capital injection is extensive. Early works by Sethi and Taksar \cite{SethiTaksar2002} and Sethi, Taksar, and Yan \cite{Sethi2005} studied dividend and capital injection problems in a diffusion setting, establishing that a barrier strategy is optimal. Kulenko and Schmidli \cite{KulenkoSchmidli2008} extended this to the Cram\'er-Lundberg model, showing that the optimal policy involves paying dividends at a barrier and injecting capital to keep the surplus nonnegative. Later, He, Liang, and Yuan \cite{HeLiangYuan2015} considered a problem with both dividend and capital injection under a spectrally negative Lévy process. More recently, Keppo, Reppen, and Soner \cite{Keppo2021} examined discrete dividend payments with capital injections, highlighting the trade-off between distribution and solvency. In all these works, the dividend policy is typically a barrier strategy without path-dependent constraints. Our work differs fundamentally by imposing a ratcheting constraint on the dividend rate, which introduces a self-path-dependent element that, when combined with capital injections and a jump-diffusion surplus, leads to a substantially more complex stochastic control problem.

The integration of these three features --- a jump surplus process, costly capital injections, and a ratcheting dividend constraint --- leads to a significantly more complex optimal control problem. The associated Hamilton-Jacobi-Bellman (HJB) equation is a new type of variational inequality that combines a partial integro-differential operator with two gradient constraints. This is markedly different from the linear differential operator in \cite{GuanXu2024} and the nonlinear differential operator in \cite{GuanFanXu} under Brownian motion setting. The non-local nature of the integral term, caused by the presence of compound Poisson process, introduces substantial analytical challenges, particularly in establishing comparison principles and proving the regularity of the solution.

Our primary contribution is to show that, despite these difficulties, the HJB equation admits a unique \emph{strong solution}. This represents a crucial improvement over the viscosity solution approach, which --- while powerful for existence --- does not suffice for constructing an implementable optimal feedback strategy. The strong regularity of our solution enables us to:

\begin{itemize}
\item Provide a complete characterization of the optimal strategy;
\item Show that the optimal dividend policy is governed by a free boundary separating the state space into regions where the dividend rate should be increased versus kept constant;
\item Explicitly incorporate the gradient constraint arising from costly capital injections; and
\item Construct a feedback control, via the explicitly determined switching free boundary, that is both admissible and optimal.
\end{itemize}

Our method follows a similar spirit to our previous works \cite{GuanXu2024,GuanFanXu}. We first analyze a boundary case to determine the terminal condition for the HJB equation. We then discretize the problem by considering a finite set of possible dividend rates, leading to a regime-switching system of ordinary integro-differential equations (OIDEs). By establishing uniform estimates for this approximating system and passing to the limit, we construct a strong solution to the original HJB variational inequality. This approach, which is more systematic than the guess-and-verify methods prevalent in stochastic control, yields a wealth of qualitative properties of the value function, including its concavity and boundedness of its derivatives. Ultimately, these properties allow us to prove the existence and continuity of the optimal switching boundary and to verify that the candidate strategy is indeed optimal.

The remainder of this paper is organized as follows. Section~\ref{sec:pf} formulates the optimal dividend ratcheting problem with capital injection under the Cramér--Lundberg model. Section~\ref{sec:hjb} derives the associated HJB variational inequality and presents the optimal strategy via a free boundary. Section~\ref{sec:solution} presents the core technical contribution of the paper: the construction of a strong solution to the HJB equation via a regime-switching approximation and a limiting argument. Section~\ref{sec:FB} establishes the regularity properties of the free boundary and the equivalent maximum rate function. Finally, Appendix~\ref{sec:proofs} contains the technical proofs and auxiliary results. 

\paragraph*{Notation.}
We use $\R$ to denote the set of real numbers, and $\R^+$ the set of non-negative real numbers.
For any measurable set $\D\subseteq\R$ and positive integer $n$,
define the space
$$
W^{n,\infty}(\D)=\Big\{u:\D\mapsto \R \;\Big|\; \esssup\limits_{x\in \D} \Big|\frac{\d^k u}{\dx^k}\Big| <+\infty,\; k=0,1,\cdots,n \Big\},
$$
where $\frac{\d^k u}{\dx^k}$ is the $k$-order weak derivative of $u$, and define the space
$$
C^n(\D)=\Big\{u\in W^{n,\infty}(\D) \;\Big|\; \frac{\d^k u}{\dx^k}\in C(\D),\; k=0,1,\cdots,n \Big\},
$$
where $C^0(\D)=C(\D)$ is the set of continuous functions on $\D$.

\section{Problem formulation and preliminaries} \label{sec:pf}
\setcounter{equation}{0}

Our model is established in a filtered complete probability space $(\Omega, \cF, \P, \{\cF_t\}_{t\geq0})$ satisfying the usual conditions.
The surplus $X_t$ of an insurance company is an $\{\cF_t\}_{t\geq0}$-adapted process satisfying the Cram\'er-Lundberg model with controls of dividend payout and capital injection:
\begin{align}\label{X_eq}
X_t=x+\int_0^t (\mu-C_s) \ds-\sum_{i=1}^{N_t} Z_i+D_t,\quad t\geq 0.
\end{align}
Here, $x$ is the initial surplus, $\mu>0$ is the constant income rate, $C_t$ is the dividend payout rate at time $t$, $D_t$ is the accumulated capital injection until time $t$,
$\{N_t\}_{t\geq 0}$ is a Poisson process with a constant intensity $\la> 0$,
$\{Z_i\}_{i=1}^\infty$ is a a series of independent random variables with a common distribution function $F$,
all of which are independent of $\{N_t\}_{t\geq 0}$.

We use $\Pi_{x,c}$ to denote the set of admissible dividend-capital injection strategies $\{(C_t,D_t)\}_{t\geq0}$ that satisfy the following properties. 
\begin{enumerate}
\item First, both the dividend payout process $\{C_t\}_{t\geq0}$ and the cumulated capital injection process $\{D_t\}_{t\geq0}$ shall be $\{\cF_t\}_{t\geq0}$-adapted and c\`{a}dl\`{a}g (right-continuous with left limits).

\item
Second, the dividend rate process $\{C_t\}_{t\geq 0}$ satisfies $c \leq C_t \leq \cc$ for all $t \geq 0$ and obeys the \textit{ratcheting constraint}: it is non-decreasing. The constant $\cc \in (0,\mu)$ serves as an upper bound on the dividend payout rate.
Clearly, shareholders expect to receive positive dividend payouts, at least in the maximal payout scenario; it is therefore economically natural to assume $\cc > 0$. 
The upper bound $\cc < \mu$ is equally natural from an economic perspective. Indeed, if $\cc \geq \mu$, the surplus would lack a positive drift --- even at the maximal dividend rate --- to offset incoming claims. This would force frequent, costly capital injections and ultimately diminish the company's performance. Hence, the condition $\cc \in (0,\mu)$ is both economically reasonable and necessary for a nontrivial dividend optimization problem.
\item 
Third, the cumulated capital injection process $\{D_t\}_{t\geq0}$ shall be nonnegative and non-decreasing; for ease of presentation, we take the convention that $D_t=0$ for all $t<0$.
\item
Last, the surplus process $X$ in \eqref{X_eq} under the strategy $\{(C_t,D_t)\}_{t\geq0}$ shall not go bankrupt, i.e., it holds that $X_t\geq 0$ for all $t\geq 0$.
\end{enumerate} 
For any $(x,c)\in \mathbb{R}\times(-\infty,\cc]$, one can verify that ${(\overline{c}, \sum_{i=1}^{N_t} Z_i + |x|)}_{t\geq 0}$ is an admissible strategy; hence $\Pi_{x,c}$ is nonempty.
Note that we allow the initial surplus $x$ to be negative, as capital injection can immediately render the surplus positive. We also permit $c$ to take negative values, which may be interpreted as the company issuing bonds at a positive return rate, albeit with an adverse effect on performance. This flexibility, together with the ratcheting constraint, broadens the model's scope and enhances its alignment with real-world financial practices.

The company's objective is to find an admissible dividend-capital injection strategy $\{(C_t,D_t)\}_{t\geq0}\in \Pi_{x,c}$ to maximize the expectation of future discounted cumulative dividend payouts after reducing the cost of capital injection:\footnote{Here and hereafter, we adopt the notation that
\[\int_0^\infty e^{-r t} \d D_t:=\int_{[0,\infty)} e^{-r t} \d D_t, \]
which is also equal to
\[\int_{(0,\infty)} e^{-r t} \d D_t+D_0
=\int_{(0,\infty)} e^{-r t} \d D^c_t+\sum_{t\geq 0}e^{-r t}\Delta D_t,\]
where $D^c$ is the continuous part of the process $D$ and $\Delta D_t=D_t-D_{t-}$.}
\begin{align}\label{value}
V(x,c)=
\sup\limits_{\{(C_t,D_t)\}_{t\geq0}\in \Pi_{x,c}}\E \Bigg[\int_0^\infty e^{-r t} C_t \dt-\ell \int_0^\infty e^{-r t} \d D_t \Bigg],~~~(x,c)\in \R\times (-\infty,\cc],\bigskip
\end{align}
where $r>0$ represents a constant discount factor and $\ell$ represents the cost per unit of capital injection.

We assume $\ell > 1$ throughout this paper. 
This condition is economically natural: raising external capital entails transaction costs, underwriting fees, and adverse selection, making it more expensive than retaining internal surplus. 
If $\ell \le 1$, external capital would be cheaper than or equal to retained earnings, leading to degenerate behavior (e.g., unbounded value functions or collapse of the free boundary). 
Mathematically, $\ell > 1$ ensures the gradient constraint $v_x \le \ell$ is non-trivial and guarantees the well-posedness of the HJB variational inequality.

We also take the following technical assumption throughout the paper:
\begin{align}\label{C1}
\d F(x)=p(x) \dx,~~\hbox{where $p$ is positive, bounded and non-increasing on $(0,\infty)$,}
\end{align}
and
\begin{align*}
0<\ga:=\E [Z_1]=\int_0^\infty xp(x)\dx <\infty.
\end{align*}
The boundedness and positivity assumptions on $p$ are not essential and are imposed only to avoid unnecessary technicalities, as the slight increase in generality would not justify the added complexity in the proofs.

The remainder of this paper is devoted to the study of problem \eqref{value}. We begin by establishing several fundamental properties of the value function. These properties, in turn, inspire and guide our subsequent analysis of the associated HJB equation.
\begin{lemma}\label{lem:lipchitz}
The value function $V$ defined in \eqref{value} is monotonically decreasing in $c$, concave in $x$ and satisfies
\begin{align*}
0\leq V(y,c)-V(x,c)\leq \ell (y-x), &~~\text{if}~x\leq y,~c\leq \cc;\\
V(x,c)=V(0,c)+\ell x, &~~\text{if}~x\leq 0,~c\leq \cc;\\
\frac{ \cc-\la\ell\ga}{r}\leq V(x,c) \leq \frac{ \cc}{r},&~~\text{if}~x\geq 0,~c\leq \cc.
\end{align*}
Moreover,
for any $x< 0$, a strategy $\{(C_t,D_t)\}_{t\geq0}$ is an optimal for $V(0,c)$ if and only if $\{(C_t,D_t-x)\}_{t\geq0}$ is optimal for $V(x,c)$; in particular, it suffices to find an optimal solution for $V(0,c)$ so as to solve $V(x,c)$.
\end{lemma}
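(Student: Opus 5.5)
Each claim follows from a direct pathwise comparison of admissible strategies, since the surplus equation \eqref{X_eq} is linear in $(x,C,D)$.

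\emph{Monotonicity in $c$.} If $c\le c'\le\cc$, then $\Pi_{x,c'}\subseteq\Pi_{x,c}$, because a nondecreasing process with $C_t\ge c'$ also satisfies $C_t\ge c$. Hence $V(x,c)\ge V(x,c')$.

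\emph{Concavity in $x$.} Take $x_1,x_2$, $\theta\in[0,1]$ and $\ep$-optimal strategies $(C^i,D^i)\in\Pi_{x_i,c}$. The convex combination $(\theta C^1+(1-\theta)C^2,\ \theta D^1+(1-\theta)D^2)$ has the following properties:
\begin{itemize}
\item it is adapted and c\`adl\`ag;
\item its dividend rate is nondecreasing and lies in $[c,\cc]$;
\item its injection process is nonnegative and nondecreasing;
\item both strategies are driven by the same claims, so its surplus is exactly $\theta X^1+(1-\theta)X^2\ge0$ starting from $\theta x_1+(1-\theta)x_2$.
\end{itemize}
The objective is linear in $(C,D)$. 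Therefore $V(\theta x_1+(1-\theta)x_2,c)\ge\theta V(x_1,c)+(1-\theta)V(x_2,c)-\ep$.

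\emph{Lipschitz bounds, $x\le y$.}
\begin{itemize}
\item For $V(y,c)\ge V(x,c)$: any $(C,D)\in\Pi_{x,c}$ also lies in $\Pi_{y,c}$, since its surplus is shifted up by $y-x\ge0$, and its payoff is unchanged.
\item For $V(y,c)\le V(x,c)+\ell(y-x)$: given $(C,D)\in\Pi_{y,c}$, use $(C,D+(y-x))$ from $x$. This means injecting $y-x$ at time $0$, which reproduces the same surplus path and costs exactly $\ell(y-x)$ more.
\end{itemize}

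\emph{Behaviour for $x\le0$.} Admissibility forces $X_0=x+D_0\ge0$, so $D_0\ge -x$ for every $(C,D)\in\Pi_{x,c}$. I will show the map $(C,D)\mapsto(C,D+x)$ is a bijection from $\Pi_{x,c}$ onto $\Pi_{0,c}$ and that it lowers the cost by exactly $-\ell x$.
\begin{itemize}
\item The surplus paths under the two strategies are identical, so nonnegativity of the surplus transfers in both directions.
\item Conversely, if $D'\in\Pi_{0,c}$, then $D'-x$ is nonnegative, nondecreasing and c\`adl\`ag, so it is an admissible injection process from $x$.
\item With the convention $\int_{[0,\infty)}$, the extra time-$0$ atom of size $-x$ costs exactly $\ell(-x)$.
\end{itemize}
This gives $V(x,c)=V(0,c)+\ell x$. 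It also shows that optimality is preserved under the correspondence, which is the final claim (with $D\mapsto D-x$).

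\emph{Bounds for $x\ge0$.} The upper bound holds because $C_t\le\cc$ and the injection cost is nonnegative, so the objective is at most $\int_0^\infty e^{-rt}\cc\dt=\cc/r$.

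For the lower bound I use the strategy $C\equiv\cc$ with $D_t=\sup_{s\le t}(-\tilde X_s)^+$, where $\tilde X$ is the uncontrolled surplus under rate $\cc$ started from $x\ge0$. This reflection keeps the surplus nonnegative. The crude bound $D_t\le\sum_{i\le N_t}Z_i$ holds because $\mu-\cc>0$ and $x\ge0$; indeed, the explicit strategy mentioned after the definition of admissibility already works with this dominance. Then
\[
\E\int_0^\infty e^{-rt}\d D_t\le\E\sum_i e^{-rT_i}Z_i=\ga\,\la\int_0^\infty e^{-rt}\dt=\la\ga/r,
\]
using the compensator of the compound Poisson process. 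This yields $V\ge(\cc-\ell\la\ga)/r$.

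The main obstacle is bookkeeping rather than ideas. I need to check c\`adl\`ag-ness and the time-$0$ atom convention in the $x\le0$ correspondence. I also need to confirm that the reflection injection is dominated by the cumulative claims, so that the expectation of the discounted claims can be computed as $\la\ga/r$.
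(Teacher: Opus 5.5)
Your proposal is correct and follows essentially the same route as the paper: inclusions of strategy sets for monotonicity, a time-$0$ injection of $y-x$ for the Lipschitz bound, a convex combination of strategies for concavity, the shift $D\mapsto D-x$ for $x\le 0$, and the discounted compound-Poisson cost $\la\ga/r$ for the lower bound. The differences are cosmetic. You use the reflected injection dominated by $\sum_{i\le N_t}Z_i$, whereas the paper takes $D_t=\sum_{i\le N_t}Z_i$ directly. You also explicitly check that $D_0\ge -x$, so the shifted injection stays nonnegative, a detail the paper leaves implicit.
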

Its proof is given in Appendix \ref{proof:lem:lipchitz}. Observe that this result yields the key estimate $0\leq V_x\leq \ell$. As will become evident, this bound is essential for our subsequent investigation of the HJB equation. 
Finally, we note that it suffices to study problem \eqref{value} in the region
\[\Q^{+}_{\infty}:=\R^+\times (-\infty, \cc],\]
since the behavior for $x< 0$ can be transformed to the case $x=0$ by \lemref{lem:lipchitz}. 


\section{Solution to the problem \eqref{value}}\label{sec:hjb}

The goals of this section are twofold. First, we introduce the HJB equation for problem \eqref{value} and establish the uniqueness of its solution in a strong sense. Second, assuming the existence of such a solution and the properties of an associated free boundary, we provide a complete characterization of the optimal strategy, thereby solving problem \eqref{value}. The existence proof and the detailed analysis of the free boundary, which are more involved, are deferred to Sections \ref{sec:solution} and \ref{sec:FB}, respectively.

We will apply PDE method to study the problem \eqref{value}. To this end, we need first to establish a comparison principle. It will be critical for the subsequent analysis, and many results of this paper will relay on it.

\subsection{A comparison principle}

Throughout the paper, we use the following three linear operators on functions $v(x,c):\R\times (-\infty, \cc]\to \R$:
\begin{align*}
\LL_c v:=&~ -(\mu-c) v_x+(r+\la) v, \medskip\\
\TT v:=&~\la \int_0^x v(x-y,c)\d F(y)+\la v(0,c)(1-F(x))=\la\E[v((x-Z_1)^+,c)], \medskip\\
\II v:=&~ \la \int_0^x v(x-y,c) \d F(y).
\end{align*}
Notice we have $\p_x (\TT v)=\II v_x$.
For any function $f(x):\R\to\R$, we shall interpret $f(x)$ as $f(x,\cc)$ when applying the above operators.

\begin{lemma}[Comparison principle]\label{lem:CP1}
Let $\D$ be a measurable subset of $\R^+$, $c\leq\cc$, $\eta\in W^{1,\infty}(\R^+)$, $\JJ: C(\R^+)\mapsto C(\R^+)$ be a linear operator satisfying
\begin{align}\label{J}
\JJ f(x)\leq \la \max\limits_{y\in [0,x]} f^+(y),\quad ~ x\in \R^+,
\end{align}
and $H(x,y):\R^+\times \R^+\to\R$ be a non-decreasing function in $y$.

Suppose $\psi_1,\;\psi_2\in W^{1,\infty}(\R^+)$ satisfy
\begin{align*}
\begin{cases}
\LL_c \psi_1-\JJ \psi_1+H(x,\psi_1)\leq\LL_c \psi_2-\JJ \psi_2+H(x,\psi_2) {\rm\quad a.e.\;in}~~ \D,\medskip\\
\psi_1\leq \psi_2{\rm\quad in}~~ \R^+\setminus\D,
\end{cases}
\end{align*}
or
\begin{align*}
\min\big\{\LL_c \psi_1-\JJ \psi_1+H(x,\psi_1), \; \psi_1-\eta\big\}\leq
\min\big\{\LL_c \psi_2-\JJ \psi_2+H(x,\psi_2), \; \psi_2-\eta\big\}~~{\rm a.e.\;in}~~ \R^+,
\end{align*}
then $\psi_1\leq \psi_2$ in $\R^+$.
\end{lemma}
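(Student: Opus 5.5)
The plan is to work with the difference $w:=\psi_1-\psi_2\in W^{1,\infty}(\R^+)$, which is Lipschitz and bounded, and to argue by contradiction along the characteristic direction of the first-order operator $\LL_c$. That direction points to the right, since $\mu-c\geq\mu-\cc>0$.

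\textbf{Step 1: both hypotheses give one inequality on the positivity set.} Let $\Omega:=\{x\in\R^+: w(x)>0\}$. I claim that in either case
\begin{align*}
\LL_c w-\JJ w\leq H(x,\psi_2)-H(x,\psi_1)\leq 0 \quad\text{a.e. in } \Omega .
\end{align*}
The second inequality holds because $H$ is non-decreasing in $y$ and $\psi_1>\psi_2$ on $\Omega$. The first uses linearity of $\LL_c$ and $\JJ$.

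In the first case, $\Omega\subseteq\D$, because $w\leq 0$ off $\D$. The claim is then exactly the first hypothesis.

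In the second case, take a.e. $x\in\Omega$. If the minimum on the left were attained by $\psi_1-\eta$, then $\psi_1-\eta\leq\min\{\cdots,\psi_2-\eta\}\leq \psi_2-\eta$, contradicting $w(x)>0$. So the left minimum equals $\LL_c\psi_1-\JJ\psi_1+H(x,\psi_1)$. This is bounded by the right minimum, hence by $\LL_c\psi_2-\JJ\psi_2+H(x,\psi_2)$, which is the claim.

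\textbf{Step 2: a differential inequality forcing growth.} Suppose $M:=\sup_{\R^+}w^+>0$. Then $M<\infty$ because $w$ is bounded. By \eqref{J} and linearity, $\JJ w(x)\leq\la\max_{[0,x]}w^+\leq \la M$. Step 1 then gives, a.e. on $\Omega$,
\begin{align*}
(\mu-c)\,w'(x)\geq (r+\la)w(x)-\la M .
\end{align*}
Fix $\delta>0$ small enough that $M-\delta>\frac{\la M}{r+\la}+\delta$; this is possible since $r>0$. Choose $x_0$ with $w(x_0)>M-\delta$.

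Consider the maximal interval $[x_0,x_1)$ on which $w>\frac{\la M}{r+\la}+\delta$. It lies in $\Omega$. On it, $w'\geq\kappa:=\frac{(r+\la)\delta}{\mu-c}>0$ a.e. Since $w$ is absolutely continuous, $w$ is increasing there, so it can never fall back to the threshold. Hence $x_1=\infty$, and $w(x)\geq w(x_0)+\kappa(x-x_0)\to\infty$. This contradicts the boundedness of $w$, so $M=0$, i.e.\ $\psi_1\leq\psi_2$ on $\R^+$.

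\textbf{Main obstacle.} The expected difficulty is that $\R^+$ is unbounded, so a maximum of $w$ need not be attained. Moreover, the nonlocal term $\JJ w$ only looks to the left, while the characteristic runs to the right. The plan avoids a maximum-point argument entirely. It uses the crude bound $\JJ w\leq\la M$ together with the strict gap $\frac{\la}{r+\la}<1$, so that a near-supremum point propagates to the right into unbounded growth.

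The only technical care needed is in Step 2. There one must use that $W^{1,\infty}$ functions are absolutely continuous, so that an a.e.\ lower bound on $w'$ integrates to monotonic growth. No regularity of $\JJ w$ beyond \eqref{J} is required.
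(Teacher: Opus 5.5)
Your proof is correct. Step 1 is essentially the paper's reduction: the paper handles the second case by noting that on $\{\psi_1>\psi_2\}$ the obstacle term cannot be active, so the hypothesis collapses to the first case.

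Where you differ is in how you deal with the unbounded domain. The paper perturbs by a barrier $\ep\phi$, with $\phi(x)=e^{\frac{r}{\mu-c}x}$. This $\phi$ satisfies $\LL_c\phi-\JJ\phi\geq\LL_c\phi-\la\phi=0$, and it forces $\psi_1-\psi_2-\ep\phi\to-\infty$. The positive maximum is therefore attained at some $x_0$. The same differential inequality then makes the derivative positive a.e.\ just to the right of $x_0$, contradicting maximality, and finally $\ep\to0$.

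You skip the barrier. Starting from a near-supremum point, you show the superlevel set $\{w>\frac{\la M}{r+\la}+\delta\}$ is forward invariant with $w'\geq\kappa$ on it, which contradicts boundedness. Your route is more elementary: there is no supersolution to verify and no limit in $\ep$. However, it uses $\sup w^+<\infty$ essentially, through the crude bound $\JJ w\leq\la M$; the $W^{1,\infty}$ hypothesis supplies this. The barrier route would equally cover differences growing slower than $e^{\frac{r}{\mu-c}x}$.

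Incidentally, your threshold $\frac{\la M}{r+\la}$ is the right one. The paper's proof requires $\psi_1-\psi_2-\ep\phi>\frac{r}{r+\la}M$ near $x_0$. That only gives $-(r+\la)(\psi_1-\psi_2-\ep\phi)+\la M<(\la-r)M$, so it produces the needed negative sign only when $\la\leq r$. Replacing it by $\frac{\la}{r+\la}M$, which is still $<M$, fixes this.
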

Its proof is given in Appendix \ref{proof:lem:CP1}.

Notice
\begin{align*}
\TT f(x) &=\la \int_0^x f(x-y)\d F(y)+\la f(0)(1-F(x))\\
&\leq \la \max\limits_{y\in [0,x]} f^+(y)\cdot\Big(\int_0^x 1\d F(y)+(1-F(x))\Big)=\la \max\limits_{y\in [0,x]} f^+(y)
\end{align*}
and
\begin{align*}
\II f(x) \leq \la \max\limits_{y\in [0,x]} f(y) \cdot
\int_0^x 1\d F(y)\leq \la \max\limits_{y\in [0,x]} f^+(y),\quad x\in \R^+,
\end{align*}
so we can apply \lemref{lem:CP1} with $\JJ=\II$ and $\JJ=\TT$.

\subsection{Boundary problem: $V(x,\cc)$}
Different from classical control problems, the boundary value of the problem \eqref{value} is not immediately known. To introduce the HJB equation, we need first to solve the boundary problem: $V(x,\cc)$.

Define
\begin{align}\label{defh}
h(x):=\la\ell\E[(Z_1-x)^+]=
\la\ell \int_x^\infty (1-F(y))\dy
=\la\ell \int_x^\infty (y-x)p(y) \dy,~~x\in\R^+.
\end{align}

\begin{lemma}\label{lem:g}
The following OIDE on $g$:
\begin{align}\label{g_eq}
\LL_{\cc} g-\TT g+h-\cc=0, \quad x\in\R^+,
\end{align}
has a unique bounded solution $g\in C^1(\R^+)$ which satisfies
\begin{gather}
\frac{\cc-\la\ell\ga}{r}\leq g\leq \frac{\cc}{r},\label{g_b}\\
0\leq g' \leq \ell, \label{g'}\\
g'' \leq 0, \label{g''}
\end{gather}
in $\R^+$ and
\begin{align}\label{g4}
g(+\infty):=&~\lim\limits_{x\to+\infty} g(x)=\frac{\cc}{r},\\
g'(+\infty):=&~\lim\limits_{x\to+\infty} g'(x)=0. \label{g5}
\end{align}
\end{lemma}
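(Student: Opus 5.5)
The plan is to write \eqref{g_eq} as a fixed-point problem and get existence and uniqueness from a contraction, then derive \eqref{g_b}--\eqref{g5} from \lemref{lem:CP1} and from the equation differentiated once. Since $\mu-\cc>0$, the equation $-(\mu-\cc)g'+(r+\la)g=\TT g-h+\cc$ is a first-order linear ODE. Integrating it forward with the integrating factor $e^{-\beta x}$, $\beta:=(r+\la)/(\mu-\cc)$, the unique bounded solution for a given bounded right-hand side $f$ is
\[
g(x)=\frac{1}{\mu-\cc}\int_x^\infty e^{-\beta(y-x)}f(y)\dy .
\]
Set $\Phi(g):=\frac{1}{\mu-\cc}\int_x^\infty e^{-\beta(y-x)}\big(\TT g-h+\cc\big)(y)\dy$. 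Since $\|\TT g\|_\infty\le \la\|g\|_\infty$, the map $\Phi$ is a contraction on bounded continuous functions with constant $\la/(r+\la)<1$. Banach's fixed point theorem then gives a unique bounded continuous fixed point, and the integral representation makes it $C^1$. Uniqueness among bounded solutions also follows from \lemref{lem:CP1} with $\JJ=\TT$ and $H=h-\cc$, since bounded $C^1$ solutions are exactly the fixed points.

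For \eqref{g_b}, I would use \lemref{lem:CP1} against constants. The constant $\cc/r$ gives $\LL_{\cc}(\cc/r)-\TT(\cc/r)+h-\cc=h\ge0$, so $\cc/r$ is a supersolution and $g\le \cc/r$. The constant $(\cc-\la\ell\ga)/r$ gives $-\la\ell\ga+h\le 0$, because $h\le h(0)=\la\ell\ga$; so it is a subsolution and lies below $g$.

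For the derivative bounds, I would differentiate the equation, using $\p_x\TT g=\II g'$ and $h'=-\la\ell(1-F)$, to get
\[
\LL_{\cc}g'-\II g'-\la\ell(1-F)=0 .
\]
Once $g'$ is known to be $W^{1,\infty}$ (from $g''=(\,(r+\la)g'-\II g'+\la\ell(1-F))/(\mu-\cc)$ with bounded terms; this needs $g'$ bounded, which the equation gives), I compare $g'$ with $0$ and with $\ell$. For $0$: the left side at $0$ is $-\la\ell(1-F)\le0$, so $0$ is a subsolution and $g'\ge0$. For $\ell$: the left side is $(r+\la)\ell-\la\ell F-\la\ell(1-F)=r\ell\ge0$, so $\ell$ is a supersolution and $g'\le\ell$. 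Applying \lemref{lem:CP1} requires care here: $\II$ only satisfies \eqref{J} for the positive part, and the sign convention of the comparison has to be checked.

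Concavity \eqref{g''} is the hard step, and I expect it to be the main obstacle. Differentiating once more gives an equation for $w=g''$ with the boundary-type term $\la g'(0)p(x)$ from differentiating $\II g'$, and the source $\la\ell p(x)$:
\[
\LL_{\cc}w-\II w-\la p(x)\big(g'(0)-\ell\big)=0 .
\]
Since $g'(0)\le\ell$, the source term has the right sign: $w=0$ then satisfies $\LL_{\cc}0-\II0-\la p(g'(0)-\ell)\ge0$, so $0$ is a supersolution and $w\le0$. The difficulty is regularity: $w$ need not be Lipschitz, since $p$ is only monotone. I would therefore either mollify $p$ and pass to the limit, or work with difference quotients $(g'(x+\delta)-g'(x))/\delta$. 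Using that $p$ is non-increasing, $F(x+\delta)-F(x)$ compares well with the boundary term, which avoids differentiating $p$.

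Finally, for the limits: $g$ is bounded and non-decreasing, so $g(+\infty)$ exists. Since $g'$ is bounded, non-negative and monotone, with $g$ bounded, $g'(+\infty)=0$. Letting $x\to\infty$ in \eqref{g_eq}, we have $\TT g\to \la g(+\infty)$ by dominated convergence and $h\to0$. This gives $r\,g(+\infty)=\cc$, which is \eqref{g4} and \eqref{g5}.
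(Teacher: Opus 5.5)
Your proof is correct. For all the a priori estimates it follows the paper's argument:
\begin{itemize}
\item comparison via \lemref{lem:CP1} against the constants $(\cc-\la\ell\ga)/r$ and $\cc/r$;
\item comparison against $0$ and $\ell$ for the once-differentiated equation $\LL_{\cc}g'-\II g'=\la\ell(1-F)$;
\item comparison against $0$ for $\LL_{\cc}g''-\II g''=\la(g'(0)-\ell)p$;
\item the limits at $+\infty$ via monotonicity and dominated convergence.
\end{itemize}

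The genuine difference is in existence. The paper only asserts existence via approximation on bounded intervals and the Leray--Schauder theorem, and leaves the details to the reader. You instead use the integrating-factor representation $g(x)=\frac{1}{\mu-\cc}\int_x^\infty e^{-\beta(y-x)}(\TT g-h+\cc)(y)\dy$. This makes $g$ the fixed point of a map on bounded continuous functions with Lipschitz constant $\la/(r+\la)<1$. That argument is complete and self-contained. It also gives uniqueness without the comparison principle, since every bounded $C^1$ solution is a fixed point. It works because the equation is linear and first order, with its bounded solution pinned down by behaviour at infinity. The Leray--Schauder route is heavier, but it is the template the paper reuses for the nonlinear penalized equations in the proof of \lemref{lem:vi}.

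The regularity obstacle you expect for $g''$ does not exist. Because $p$ is bounded, $F$ is Lipschitz. The function $\II g'$ is also Lipschitz, with increments bounded by $\la(\ell\|p\|_\infty+\|g''\|_\infty)|x_2-x_1|$. Hence $g''\in W^{1,\infty}(\R^+)$ and its differentiated equation holds a.e. Since \lemref{lem:CP1} only needs $W^{1,\infty}$ functions and a.e. inequalities, it applies directly, exactly as in the paper. You need no mollification of $p$ and no difference quotients, so the step you call the main obstacle is in fact immediate.

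There is also a harmless sign slip in your formula for $g''$: the last term should be $-\la\ell(1-F)$, not $+\la\ell(1-F)$.
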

Its proof is given in Appendix \ref{proof:lem:g}.

From now on we let $g$ be the solution given in the above result.
It is indeed the optimal value for the problem \eqref{value} when $c=\cc$.

\begin{theorem}[Optimal strategy in the boundary case]\label{theorem:boundaryvalue}
We have $V(x,\cc)=g(x)$ for $x\in\R^+$ and $V(x,\cc)=g(0)+\ell x$ for $x\leq 0$, where $g$ is given in \lemref{lem:g}.
Given $x\in\R$, then $\{(\cc,\overline{D}_t)\}_{t\geq0}$ is an optimal strategy for $V(x,\cc)$, where
\begin{align*}
\overline{D}_t=\sup\limits_{0\leq s\leq t}\Big(\sum_{i=1}^{N_s} Z_i-x-(\mu-\cc)s\Big)^+,~~t\geq 0.
\end{align*}
\end{theorem}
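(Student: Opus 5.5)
This is a standard verification argument. When $c=\cc$, the ratcheting constraint forces $C_t\equiv\cc$, so the only remaining control is $D$. By \lemref{lem:lipchitz} the case $x\le 0$ reduces to $x=0$: shifting $D$ by $-x$ changes the objective by $\ell x$. It therefore suffices to show $V(x,\cc)=g(x)$ for $x\ge0$ and that $\overline D$ attains it. Note also that $g(0)+\ell x$ is exactly what the shift gives for $x<0$, and that $\overline D_0=(-x)^+$ is consistent with this.

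\textbf{Upper bound.} Extend $g$ to $\R$ by $g(x)=g(0)+\ell x$ for $x<0$. Take any admissible $D$, so $X_t\ge0$. Apply the It\^o/change-of-variables formula for finite-variation jump processes to $e^{-rt}g(X_t)$, where $X$ has drift $\mu-\cc$, compensated claim jumps, and increments of $D$ (continuous part plus jumps). The drift plus compensator term is
\[
e^{-rt}\bigl[(\mu-\cc)g'(X_s)-(r+\la)g(X_s)+\la\E g(X_s-Z)\bigr].
\]
Since $X_{s-}\ge0$, the post-jump value may be negative, and in that case
\[
g(X_{s-}-Z)=g(0)+\ell(X_{s-}-Z)\ \ge\ g(0)-\ell(Z-X_{s-})^+ .
\]
More precisely, $\E g(x-Z)=\E g((x-Z)^+)-\ell\E(Z-x)^+$, so $\la\E g(x-Z)=\TT g-h$. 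By \eqref{g_eq} the drift term therefore equals $-\cc$.

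Because $D$ injects capital, the jump term $g(X_{s-}-Z+\Delta D)-g(X_{s-}-Z)$ needs care. I would instead treat the capital injections directly through $0\le g'\le\ell$ (\eqref{g'}), using that $g$ on the extended domain is Lipschitz with constant $\ell$. Every increment of $D$ (continuous or jump) then raises $g$ by at most $\ell\,\d D$. This gives
\[
\E e^{-rT}g(X_T)\le g(x)-\E\int_0^T e^{-rt}\cc\,\dt+\ell\,\E\int_0^T e^{-rt}\d D_t ,
\]
where the compensated martingale is a true martingale because $g$ has linear growth and $\E Z<\infty$. Now let $T\to\infty$. Since $g$ is bounded on $\R^+$ (\eqref{g_b}) and $X_T\ge0$, the left side tends to $0$, giving $g(x)\ge$ payoff. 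When the claim jump and the injection jump occur at the same time, I would write $\Delta g$ as the claim part (post-jump point possibly negative, handled by the extension) plus the injection part (bounded by $\ell\Delta D$ via the Lipschitz bound).

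\textbf{Attainment.} For $\overline D$, the Skorokhod reflection keeps $X\ge0$, and $\overline D$ increases only when $X$ would go negative. Its continuous part is zero, because the drift $\mu-\cc>0$ and jumps are downward. At a claim time $X_s=0$ exactly after injection, so the injection part contributes $g(0)-g(X_{s-}-Z)=\ell\Delta D_s$, which holds exactly by the linear extension. All inequalities thus become equalities, and the payoff equals $g(x)$. Admissibility (adapted, c\`adl\`ag, nondecreasing, $X\ge0$) is immediate from the running-supremum formula.

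\textbf{Main obstacle.} The hardest step is the rigorous bookkeeping of simultaneous claim jumps and injection jumps in the change-of-variables formula, together with checking the martingale and integrability conditions, for example that $\E\int e^{-rt}\d D_t<\infty$ (otherwise the payoff is $-\infty$ and the inequality is trivial).
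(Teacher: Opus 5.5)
Your proposal is correct and follows essentially the same verification argument as the paper: It\^o's formula for $e^{-rt}g(X_t)$, compensation of the claim jumps via \eqref{g_eq}, the bound $g'\le\ell$ for the injections, and equality for the reflected strategy $\overline D$, with $x\le 0$ handled by \lemref{lem:lipchitz}. Your linear extension $g(y)=g(0)+\ell y$ for $y<0$ only repackages the paper's bookkeeping, via the identity $g(y-Z)=g((y-Z)^+)-\ell(Z-y)^+$. The paper instead splits each injection jump into the minimal part $\Delta' D_t=(Z_{N_t}-X_{t-})^+\Delta N_t$, whose compensator yields $h$, and the excess $\Delta D_t-\Delta' D_t$, which is bounded via $g'\le\ell$.
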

Its proof is given in Appendix \ref{proof:theorem:boundaryvalue}.

\subsection{HJB equation on $\Q^{+}_{\infty}$}\label{sec:HJB}
The HJB equation for the problem \eqref{value} on $\Q^{+}_{\infty}$ is a variational inequality (VI) on $v(x,c)$:
\begin{align}\label{v_pb00}
\begin{cases}
\min\{\LL_c v-\TT v+h-c, \;\ell-v_x, \;-v_c\}=0, & (x,c)\in\Q^{+}_{\infty},\medskip\\
v(x,\cc)=g (x), & x\in\R^+,
\end{cases}
\end{align}
with bounded growth condition.

We will show \eqref{v_pb00} admits a {\it strong solution}, stronger than viscosity solution (see \cite{YZ99}).
This allows us to give a complete answer to the problem \eqref{value} later.
In words, a strong solution to \eqref{v_pb00} must: (i) be continuously differentiable in $x$; (ii) be non-increasing in
$c$; (iii) satisfy the variational inequality point-wisely; (iv) have the gradient constraint active only when the dividend rate is not at its maximum. The precise definition is given as follows.

\begin{definition}[Strong solution to \eqref{v_pb00}]\label{def:solution}
We call $v$ is a strong solution to the VI \eqref{v_pb00} if the follows hold:
\begin{enumerate}
\item $v\in\A_{\infty}:=\Big\{u:\Q^{+}_{\infty}\mapsto \R \;\Big|\; u \in C(\Q^{+}_{\infty})
\;\hbox{and}\; u(\cdot,c) \in C^1(\R^+) \;\hbox{for every } c\in (-\infty,\cc]\Big\}$;
\item $v$ is non-increasing w.r.t. $c$;
\item For each $c\in(-\infty,\cc]$,
\begin{align*}
\LL_c v-\TT v+h-c\geq 0 \;\hbox{ and }\;v_x\leq \ell\;\hbox{ in }\; \R^+;
\end{align*}
\item If for some $(x,c)\in \R^{+}\times (-\infty,\cc)$, we have $v(x,c)>v(x,s)$ for all $c<s\leq \cc,$ then
\begin{align*}
\big(\LL_{c} v - \TT v + h - c \big)(x,c)=0 \;\hbox{ or }\; v_x(x,c)=\ell;
\end{align*}
\item For all $x\in\R^+$, $v(x,\cc)=g(x)$.
\end{enumerate}
\end{definition}
\begin{lemma}[Uniqueness of strong solution to \eqref{v_pb00}]\label{lem:CP00}
The bounded strong solution to the VI \eqref{v_pb00}, if it exists, is unique.
\end{lemma}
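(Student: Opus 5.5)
The plan is to compare two bounded strong solutions $v_1,v_2$ directly, one $c$-slice at a time, and propagate the comparison downward from the terminal slice $c=\cc$, where $v_1(\cdot,\cc)=v_2(\cdot,\cc)=g$. By symmetry it suffices to prove $v_1\le v_2$. Set
\[
M(c):=\sup_{x\in\R^+}\big(v_1-v_2\big)(x,c),\qquad m(c):=\sup_{s\in(c,\cc]}M(s)^+ ,
\]
both finite by boundedness. The key intermediate claim is
\begin{align}\label{prop:slice}
M(c)\le m(c)\quad\text{for every } c<\cc .
\end{align}

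To prove \eqref{prop:slice}, fix $c<\cc$ and put $w_i=v_i(\cdot,c)\in C^1(\R^+)$ and $\psi=w_1-w_2$. Split $\R^+$ into three parts.
\begin{itemize}
\item On the \emph{flat set} $\R^+\setminus\D$, with $\D:=\{x: v_1(x,c)>v_1(x,s)\ \forall s\in(c,\cc]\}$, there is some $s>c$ with $v_1(x,c)=v_1(x,s)$. Since $v_2$ is non-increasing in $c$,
\[
\psi(x)\le v_1(x,s)-v_2(x,s)\le M(s)\le m(c).
\]
\item On $\D$, item 4 of Definition~\ref{def:solution} applies. On the part $\D_1$ where $(\LL_c w_1-\TT w_1+h-c)=0$, item 3 for $w_2$ gives $\LL_c\psi-\TT\psi\le 0$.
\item On the remaining part $\D_2$, we have $w_1'=\ell\ge w_2'$, so $\psi'\ge 0$ there.
\end{itemize}

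Now argue on $\psi$ as in Lemma~\ref{lem:CP1}. Suppose $\sup\psi>m(c)\ge0$.
\begin{itemize}
\item If the supremum is attained at some $x_0$ (interior or $x_0=0$), choose $x_0$ to be the largest such maximizer, or handle it through a one-sided derivative. Then $x_0$ cannot lie in the flat set. If $x_0\in\D_1$, we get $\psi'(x_0)\ge0$ when $x_0=0$ and $\psi'(x_0)=0$ otherwise. Combined with $\TT\psi(x_0)\le\la\max_{[0,x_0]}\psi^+=\la\psi(x_0)$, this yields $(r+\la)\psi(x_0)\le(\mu-c)\psi'(x_0)+\la\psi(x_0)$. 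With $\mu-c>0$ this gives $r\psi(x_0)\le(\mu-c)\psi'(x_0)$. In the interior case this forces $\psi(x_0)\le0$. The boundary case needs $\psi'(0)\le0$, which follows because $0$ is a maximizer.
\item If $x_0\in\D_2$, then $\psi'\ge0$ near $x_0$ inside $\D_2$. Moving right, we either stay in $\D_2$, reach the flat set, or reach $\D_1$ at a point with the same maximal value, which contradicts the previous case. A cleaner way to organize all of this is to reduce to Lemma~\ref{lem:CP1} with $\JJ=\TT$, after handling the $\D_2$ portion by the monotonicity of $\psi$ there.
\item If the supremum is only approached as $x\to\infty$, I would use $\psi-\ep(1+x)^{1/2}$-type perturbations, or use that $\psi'$ is bounded and that $\TT$ only looks to the left, to get a near-maximizer with $\psi'$ small. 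The discount $r>0$ then again gives $\psi\le m(c)+o(1)$.
\end{itemize}

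From \eqref{prop:slice}, the function $c\mapsto \sup_{s\ge c}M(s)^+$ equals $m(c)$, so it is constant on $(-\infty,\cc)$ and equal to $\lim_{c\uparrow\cc}m(c)$. To conclude that this constant is $0$, I will show $M(c)\to M(\cc)=0$ as $c\uparrow\cc$, i.e.\ continuity of $v_i(\cdot,c)\to g$ in $c$ uniformly in $x$. Continuity on compacts is given by $v_i\in C(\Q^+_\infty)$. For large $x$ I would use monotonicity in $c$ together with the a priori bounds $v_i(\cdot,c)\ge g$, obtained by comparing with the terminal slice via monotonicity, and $v_i\le \cc/r$ from the equation with Lemma~\ref{lem:CP1}. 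Since $g(+\infty)=\cc/r$ by \eqref{g4}, the tails are squeezed uniformly.

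I expect two steps to be the main obstacles. The first is this uniform-in-$x$ continuity at $c=\cc$. The second is the careful treatment of the gradient-constraint set $\D_2$ in the slice comparison, since Lemma~\ref{lem:CP1} is stated for obstacle-type rather than gradient-type constraints. If the direct maximum-point argument there becomes messy, I would instead apply Lemma~\ref{lem:CP1} to the modified function $\tilde\psi=\max\{\psi,m(c)\}$ on the set where $\psi>m(c)$, exploiting $\psi'\ge 0$ on $\D_2$ to push any maximum into $\D_1$.
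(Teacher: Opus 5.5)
\textbf{There is a genuine gap in the propagation step.} Your slice inequality $M(c)\le m(c)$ is non-strict, and the step ``so it is constant on $(-\infty,\cc)$'' does not follow from it. Take $M(c)=\cc-c$ for $c\le\cc$. It satisfies $M(c)\le m(c)=\sup_{s\in(c,\cc]}M(s)^+=\cc-c$ for every $c<\cc$, is continuous, and vanishes at $\cc$, yet $m$ is not constant.

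So even with the slice claim and the uniform continuity at $c=\cc$ fully proved, nothing excludes $v_1-v_2$ growing as $c$ decreases. The information is lost on the flat set: the bound $\psi(x)\le M(s)$ compares with a slice $s$ that may be arbitrarily close to $c$. A supremum over other slices can never yield a strict contradiction.

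The missing idea is to work at a single extremal point in both variables. Maximize a suitably perturbed version of $v_1-v_2$ jointly over $\R^+\times[\uc,\cc]$, and take a maximizer $(x_0,c_0)$ with the largest $c_0$. The strict inequality at $(x_0,c)$ for $c>c_0$, combined with the monotonicity of $v_2$ in $c$, gives $v_1(x_0,c_0)>v_1(x_0,c)$ for all $c\in(c_0,\cc]$. Hence item~4 of Definition~\ref{def:solution} applies at the maximizer itself, and the flat set never has to be controlled through other slices.

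\textbf{There is a second gap in the gradient branch when the supremum is not attained.} Pointwise, a maximizer with $v_{1,x}=\ell$ gives only $\psi'=0$. Your right-neighbourhood argument needs the maximum to be attained. For the other case you propose the perturbation $\psi-\ep(1+x)^{1/2}$, but it destroys the one useful sign on $\D_2$: there the perturbed derivative is only $\ge-\ep/(2\sqrt{1+x})$. The tail squeeze does not rescue this. The bound $v_i\le\cc/r$ for an arbitrary bounded strong solution does not follow from Lemma~\ref{lem:CP1} slice by slice, because on the flat set there is no equation, and bounding it requires the same propagation in $c$.

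\textbf{How the paper resolves both issues.} It works with $\Psi=(1+\ep)v_1-v_2-\ep\phi$, where $\phi(x)=x+a$ and $a>(\cc+\mu-\uc)/r$.
\begin{itemize}
\item $\Psi\to-\infty$ as $x\to\infty$, and $\Psi(\cdot,\cc)=\ep(g-\phi)<0$. So a joint maximizer with $c_0<\cc$ exists, and your uniform continuity at $\cc$ is not needed.
\item If $v_{1,x}(x_0,c_0)=\ell$, then $\p_x\Psi(x_0,c_0)\ge(1+\ep)\ell-\ell-\ep=\ep(\ell-1)>0$, contradicting maximality. This is where $\ell>1$ enters.
\item If instead the equation holds for $v_1$ at $(x_0,c_0)$, then $\LL_{c_0}\phi-\TT\phi>\cc$ forces $\LL_{c_0}\Psi-\TT\Psi<0$. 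This contradicts $\LL_{c_0}\Psi-\TT\Psi\ge rM>0$, which follows from $\p_x\Psi\le0$ and $\TT\Psi\le\la M$ at the maximum.
\end{itemize}
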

Its proof is given in Appendix \ref{proof:lem:CP00}.

The main results of this paper is to show that the VI \eqref{v_pb00} admits a strong solution with nice properties. This solution will be shown to be the value function of the problem \eqref{value}.

\begin{theorem}[Existence of strong solution to \eqref{v_pb00}]\label{thm:u}
There exists a unique bounded strong solution $v$ to the VI \eqref{v_pb00}. The solution satisfies
\begin{align*}
\frac{\cc-\la\ell\ga}{r}\leq v \leq \frac{\cc}{r}, ~~
0\leq v_x\leq \ell, ~~
-\frac{\la\ell}{\mu-\cc}\leq v_{xx}\leq 0~{\rm a.e.,} ~~
-\frac{\ell-1}{r}\leq v_c\leq 0~{\rm a.e..}
\end{align*}
Furthermore, it is also the unique bounded strong solution (in the sense of \defref{def:solution2}) to the problem
\begin{align*}
\begin{cases}
\min\{\LL_c v-\TT v+h-c, \;-v_c\}=0, & (x,c)\in\Q^{+}_{\infty},\medskip\\
v(x,\cc)=g (x), & x\in\R^+,
\end{cases}
\end{align*}
i.e., if $v(x,c)>v(x,s)$ holds for all $s\in(c,\cc]$ at a point $(x,c)\in\Q^{+}_{\infty}$, then
\begin{align}\label{NS equation}
\big(\LL_{c} v - \TT v + h - c \big)(x,c)=0.
\end{align}
\end{theorem}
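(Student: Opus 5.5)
The plan is the one announced in the introduction: discretize the dividend rate, solve a finite regime-switching system of OIDEs by backward recursion starting from $g$, derive estimates that do not depend on the mesh, and pass to the limit. Uniqueness is already given by \lemref{lem:CP00}, so only existence, the listed bounds and the reduced characterization \eqref{NS equation} need proof.

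\textbf{Approximating system.} Fix a large $N$ and a mesh $c_0=\cc>c_1>\dots$ on $[-N,\cc]$ with step $\Delta=(\cc+N)/n$. Put $v^{0}=g$. Given $v^{k-1}$, I will solve for $v^{k}=v(\cdot,c_k)$ the obstacle problem
\begin{align*}
\min\big\{\LL_{c_k} v^k-\TT v^k+h-c_k,\; v^k-v^{k-1}\big\}=0\quad\text{in }\R^+.
\end{align*}
The term $\TT v^k$ is nonlocal, but it only involves values on $[0,x]$, and $\mu-c_k>0$. So I would solve this one-dimensional problem by a Picard or penalization iteration. The iteration is monotone and stays between the constant sub- and supersolutions $(\cc-\la\ell\ga)/r$ and $\cc/r$, and \lemref{lem:CP1} (with $\JJ=\TT$ and the obstacle $\eta=v^{k-1}$) gives existence, uniqueness and the bounds on $v^k$. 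The monotonicity $v^k\ge v^{k-1}$ is built into the scheme.

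\textbf{Estimates uniform in $n$.} These are done by induction on $k$.
\begin{itemize}
\item For $v^k_x\in[0,\ell]$: differentiate the equation in the continuation region. Since $\p_x\TT=\II\p_x$, the derivative $w=v^k_x$ satisfies $\LL_{c_k}w-\II w-\la v^k(0)p(x)+h'=0$. Then compare $w$ with $0$ and with $\ell$, using \lemref{lem:CP1} with $\JJ=\II$. For the constant $\ell$, the identity $h'(x)=-\la\ell(1-F(x))$ makes the residual sign-definite. On the contact set, $w=v^{k-1}_x$ is already controlled by induction.
\item For $v_{xx}\le0$: differentiate once more. The monotonicity of $p$ from \eqref{C1} is exactly what makes the $p'$-term have the right sign. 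The lower bound $-\la\ell/(\mu-\cc)$ comes from reading the equation as $(\mu-c)v_x=(r+\la)v-\TT v+h-c$ and using $v_x\le\ell$.
\item For the $c$-derivative: apply \lemref{lem:CP1} to $v^k-v^{k-1}$ to get $0\le v^k-v^{k-1}\le \Delta(\ell-1)/r$. The factor $\ell-1$ arises because raising $c$ by $\Delta$ gains $\Delta$ in the running reward while the drift term costs at most $\Delta v_x\le\Delta\ell$.
\end{itemize}

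\textbf{Limit and reduced equation.} Interpolate in $c$ and use Arzel\`a--Ascoli in $x$ (the family is uniformly bounded in $W^{2,\infty}$) and Lipschitz continuity in $c$. Extracting a subsequence and letting $N\to\infty$ gives $v\in\A_\infty$ with all the listed bounds. Items 1–3 and 5 of \defref{def:solution} pass to the limit directly; item 3 follows because $\LL_c v-\TT v+h-c$ converges locally uniformly, since $v_x$ converges uniformly on compacts.

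The \textbf{main obstacle} is item 4 together with \eqref{NS equation}. Suppose $v(x,c)>v(x,s)$ for all $s\in(c,\cc]$. Then near $(x,c)$ the discrete solutions satisfy $v^k>v^{k-1}$ at suitable nodes $k$ and nearby points, so the equation holds there, and continuity lets it pass to $(x,c)$. The difficulty is that strict monotonicity of the limit does not automatically give strict monotonicity at nearby mesh points. I would try the argument of \cite{GuanXu2024}. The set $\{v(\cdot,c)>v(\cdot,s)\ \forall s>c\}$ is characterized through a switching boundary, and the $c$-monotonicity of $\LL_c v-\TT v+h-c$ (its $c$-derivative is $v_x-1+\ldots$) is used to show that the region where the equation is strict is open in $c$ and therefore inherited from the approximations. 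This already yields the stronger statement \eqref{NS equation}, with no alternative $v_x=\ell$, consistent with the fact that $v_x\le\ell$ never binds for $x\ge0$, where capital is injected only at $0$.
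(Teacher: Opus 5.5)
Your skeleton matches the paper's: the backward regime-switching recursion from $g$, the bounds $0\le v^k_x\le\ell$, the semiconvexity bound, $0\le v^k-v^{k-1}\le\Delta(\ell-1)/r$, interpolation and compactness. However, the two steps that actually carry the theorem are not supplied.

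\textbf{Concavity.} Differentiating twice in the continuation set gives $\LL_{c_k}v^k_{xx}-\II v^k_{xx}=\la\big(v^k_x(0)-\ell\big)p\le 0$. This uses only $p\ge 0$; there is no $p'$ term. Likewise, the $-\la v^k(0)p$ term in your first-derivative equation cancels, since $\partial_x\TT=\II\partial_x$.

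This does not close an induction. The obstacle solutions $v^k$ are only $W^{1,\infty}$ and semiconvex, and $v^k_{xx}$ is discontinuous where the contact set $\{v^k=v^{k-1}\}$ begins. \lemref{lem:CP1} needs a continuous function, so a positive supremum of $v^k_{xx}$ can sit at such a point.

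Worse, take a point $x_k$ where the contact set starts, with $v^{k-1}$ in its own continuation region. There one gets $v^k_x(x_k-)=v^{k-1}_x(x_k)$. Integrating $\II(v^k_x-v^{k-1}_x)(x_k)$ by parts and using $p'\le 0$ then gives
$(\mu-c_k)v^k_{xx}(x_k-)\ge(\mu-c_{k-1})v^{k-1}_{xx}(x_k)+\la\big(v^k(0)-v^{k-1}(0)\big)p(x_k)$.
So the monotonicity of $p$ pushes the wrong way, and there is no reason for $v^k$ to be concave.

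The paper never proves discrete concavity; it obtains concavity only in the limit. It uses the control representation of $v^N_i$ as the value of a finite-ratcheting problem. Averaging two strategies with rates on the mesh-$\Delta$ grid gives rates on the mesh-$\Delta/2$ grid, so $\frac12\big(v^N_i(x_1)+v^N_i(x_2)\big)\le v^{N+1}_{2i}\big(\frac{x_1+x_2}{2}\big)$. Together with $v^N_i\le v^{N+1}_{2i}$, this passes to the limit along dyadic nodes.

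Without this you have no upper bound on $v_{xx}$. You therefore lack both the uniform $W^{2,\infty}$ bound you invoke for convergence of $v_x$ and the property $v(\cdot,c)\in C^1$, which is item~1 of \defref{def:solution}.

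\textbf{Item 4 and \eqref{NS equation}.} Your route through the switching boundary relies on the free-boundary analysis of Section~\ref{sec:FB}. In the paper that analysis presupposes the strong solution: \lemref{lem:N} uses \eqref{NS equation} and uniqueness. So the route is circular unless redone at the discrete level, and as written it is not an argument.

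The obstruction you cite does not actually arise at the fixed point $x$. Since $v(x,c)>v(x,c+1/n)$ and $v^{N_k}\to v$ locally uniformly, telescoping over the nodes in $[c,c+1/n]$ yields a node $s$ with $v^N_j(x)>v^N_{j-1}(x)$. Hence $x$ lies in the open set where the discrete equation holds, and that equation determines $K=(v^N_j)'(x)$ explicitly.

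The missing tool is then the uniform semiconvexity bound. The inequality $v^N_j(y)\ge v^N_j(x)+K(y-x)-\frac{\la\ell}{2(\mu-\cc)}(y-x)^2$ survives the limit. Because $v(\cdot,c)$ is differentiable (again by concavity), this forces $v_x(x,c)=\lim K$, which is \eqref{NS equation}.
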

We will construct such a solution through a regime switching system.
Since the proof is very delicate, we defer it to Section \ref{sec:solution}.

\subsection{Optimal strategy to \eqref{value}.} \label{sec:optimalstrategy}
To find an optimal strategy for the problem \eqref{value},
we divided the domain $\R^+\times (-\infty, \cc)$ into a {\bf switching region}:
\begin{align*}
\SS:=\{(x,c)\in \R^+\times (-\infty, \cc) \mid v(x,c)=v(x,s)~\mbox{for some}~ c<s\leq \cc\}
\end{align*}
and a {\bf non-switching region}:
\begin{align*}
\NS:=\{(x,c)\in \R^+\times (-\infty, \cc) \mid v(x,c)>v(x,s)~\mbox{for all}~ c<s\leq \cc\}.
\end{align*}
By \eqref{NS equation}, we have $\LL_c v-\TT v+h-c=0$ in $\NS$.

The following shows they are separated by the following curve (free boundary)
\begin{align*}
\sw(c):=\inf\{x\in\R^+\mid (x,c)\in \SS\},\quad c\in(-\infty, \cc).
\end{align*}

\begin{proposition}[Property of the free boundary $\sw(\cdot)$]\label{proposition:freeboundary}
The limit $\sw(\cc):=\lim_{c\to\cc-}\sw(c)$ exists and finite. The curve $\sw(\cdot)$ is continuous in $(-\infty, \cc]$. The regions $\SS$ and $\NS$ are separated by it in the following sense:
\begin{align}\label{propertysw0}
\{(x,c)\in \R^+\times(-\infty, \cc)\mid x>\sw(c)\} \subseteq \SS \subseteq \{(x,c)\in \R^+\times(-\infty, \cc)\mid x \geq \sw(c)\},
\end{align}
and
\begin{align}\label{propertysw1}
\{(x,c)\in \R^+\times(-\infty, \cc)\mid x< \sw(c)\} \subseteq \NS \subseteq \{(x,c)\in \R^+\times(-\infty, \cc)\mid x \leq \sw(c)\}.
\end{align}
\end{proposition}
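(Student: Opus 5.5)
Throughout, $v$ is the strong solution from \thmref{thm:u}. The key structural fact to establish first is that \emph{$\SS$ is closed upward in $x$}: if $(x_0,c)\in\SS$ then $(x,c)\in\SS$ for all $x\ge x_0$. Given this, both inclusions \eqref{propertysw0}--\eqref{propertysw1} follow at once from the definition of $\sw(c)$ as an infimum, since $\NS$ is the complement of $\SS$. The point $x=\sw(c)$ itself is left ambiguous, which is exactly what the statement allows.

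\textbf{Step 1: $\SS$ is closed upward in $x$.} Fix $c<\cc$ and set
\[
s^*(x):=\max\{s\in[c,\cc]: v(x,s)=v(x,c)\}.
\]
The maximum exists by continuity and monotonicity of $v$ in $c$. Then $(x,c)\in\SS$ iff $s^*(x)>c$. I would show that $s^*$ is non-decreasing in $x$. This amounts to showing that the difference $w_s(x):=v(x,c)-v(x,s)\ge 0$ vanishing at $x_0$ forces it to vanish for $x>x_0$.

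To do this, I would use the equation \eqref{NS equation}. On the set where $w_s>0$, the function $v(\cdot,c)$ solves $\LL_c v-\TT v+h-c=0$. Meanwhile $v(\cdot,s)$ satisfies $\LL_s v-\TT v+h-s\ge0$, and $\LL_s\le\LL_c$ on functions with $v_x\ge0$. Hence
\[
\LL_c w_s-\TT w_s\le -(s-c)-(s-c)v_x(\cdot,s)<0 .
\]
This is a strict subsolution-type inequality. Apply \lemref{lem:CP1} on the interval $\D=(x_0,x_1)$ where $w_s>0$. Since $w_s\ge0$ everywhere, the nonlocal $\TT$ term only helps. Since $\mu-c>0$, the operator propagates information to the right (transport in $-x$ direction with $-(\mu-c)\p_x$). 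Together with $w_s(x_0)=0$, this should yield $w_s\le0$ on $\D$, a contradiction.

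An alternative route I would try if the direct comparison is awkward is concavity. Show $v_c$ is non-increasing in $x$, i.e.\ $v_{xc}\le0$ in the a.e.\ sense. Then $w_s$ is non-increasing in $x$, so zeros propagate to the right. To get this, differentiate the approximating regime-switching system of Section~\ref{sec:solution} and pass to the limit.

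\textbf{Step 2: finiteness and continuity of $\sw$.} Finiteness of $\sw(c)$ should follow from $g'(+\infty)=0$ and the bound $v_c\ge-(\ell-1)/r$. For large $x$, paying at rate $\cc$ is better than $c$: on $\NS$ the equation would give
\[
(r+\la)v-\TT v\approx c<\cc
\]
as $v_x\to0$. This contradicts $v\ge v(\cdot,\cc)=g\to\cc/r$.

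Monotonicity of $\sw$ in $c$ then follows from the ordering of $s^*$. Upper semicontinuity comes from closedness of $\SS$, a consequence of continuity of $v$. Lower semicontinuity comes from openness of $\NS$ relative to the domain. Existence of $\sw(\cc)$ follows from monotonicity plus a uniform bound.

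\textbf{Main obstacle.} I expect the main obstacle to be Step 1. The nonlocal term $\TT$ looks to the left (values on $[0,x]$). So even a local comparison on $(x_0,x_1)$ requires controlling $w_s$ on $[0,x_0]$, where $w_s$ may be positive. I would handle this by applying \lemref{lem:CP1} with $\JJ=\II$ restricted to the shifted problem, treating contributions from $[0,x_0]$ as a nonnegative source term with the favorable sign.
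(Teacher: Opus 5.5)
Your reduction of \eqref{propertysw0}--\eqref{propertysw1} to upward closedness of $\SS$ in $x$ is correct; it is exactly \lemref{lem:N1}. The argument you give for that closedness, however, fails, for three reasons.

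\emph{The sign is reversed.} For $s>c$ and $v_x\ge 0$ one has $\LL_s v-\LL_c v=(s-c)v_x\ge 0$. So at points of $\NS$ the correct bound is $\LL_c w_s-\TT w_s\le -(s-c)\bigl(1-v_x(\cdot,s)\bigr)$, which is positive wherever $v_x(\cdot,s)>1$, and $v_x$ may reach $\ell>1$.

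\emph{The equation is not available.} $w_s(x)>0$ for one particular $s$ does not put $(x,c)$ in $\NS$, so you cannot use the equation for $v(\cdot,c)$ on $\{w_s>0\}$.

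\emph{The nonlocal term has the wrong sign.} The contribution of $[0,x_0]$ to $\TT w_s$ is nonnegative and lands on the unfavourable side. To compare with $0$ you need it bounded by $(s-c)(1-v_x(x,s))$, not merely nonnegative.

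The missing idea is \lemref{lem:N}. Choose $c_0$ as the largest rate with $v(x_0,c_0)=v(x_0,c)$, so the equation holds at $(x_0,c_0)$. Since $v(\cdot,c)-v(\cdot,c_0)\ge 0$ is minimal at $x_0$, the $x$-derivatives coincide there, and subtracting gives \eqref{vx<=1}. That inequality propagates to all $x\ge x_0$ for two reasons. First, $(c_0-c)(1-v_x(x,c_0))$ is non-decreasing in $x$, by concavity of $v(\cdot,c_0)$. Second, the frozen left contribution $\la\int_0^{x_0}[v(z,c)-v(z,c_0)]p(x-z)\dz+\la[v(0,c)-v(0,c_0)](1-F(x))$ is non-increasing in $x$ because $p$ is non-increasing. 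This is precisely where \eqref{C1} is used, and your plan never uses it. Hence $v(x,c_0)$, constant in $c$, solves the VI on $[x_0,\infty)\times[d_0,c_0]$ with the left part as a source. Uniqueness for that two-variable problem then gives the claim; it is proved by a joint maximization in $(x,c)$ as in \lemref{lem:CP00}, and that is what resolves the $\NS$ issue. Your fallback, $v_{xc}\le 0$, is a stronger statement with no argument behind it; the discrete $v_i$ are not even known to be concave.

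Step 2 does not deliver continuity or finiteness of $\sw(\cc)$.

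\emph{Monotonicity of $\sw$ is unproved.} Monotonicity of $x\mapsto s^*(x)$ at fixed $c$ says nothing about how $\sw$ varies in $c$. Monotonicity of $\sw$ is proved nowhere (the paper avoids it), so neither your continuity argument nor the existence of $\lim_{c\to\cc-}\sw(c)$ can rest on it.

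\emph{Your two semicontinuity claims are one claim.} Closedness of $\SS$ and openness of $\NS$ are the same property, and both give only lower semicontinuity of $\sw$. Moreover this property does not follow from continuity of $v$: the equivalent rates $s_n>c_n$ can collapse onto $c$ in the limit. Indeed $\SS$ is not closed as soon as some boundary point $(\sw(c),c)$ lies in $\NS$.

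\emph{Continuity needs input you do not have.} The paper's proof, \lemref{thm:xcccontinuous}, uses the one-sided second-difference estimate in $c$ (\lemref{lem:uci_b}, \lemref{lem:ucL}). That estimate forces the normalized differences $u^n$ to vanish beyond $x_*$, and then $p>0$ and $\ell>1$ lead to the contradiction $\la(\ell-1)p=0$. It also uses strict decrease of $\TT v(x,c)-\TT v(x,c_0)$ in $x$, from $p>0$ and $p'\le 0$, to exclude a jump between the one-sided limits.

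\emph{Finiteness of $\sw(\cc)$ needs a uniform bound.} Your finiteness argument is pointwise and degenerates as $c\to\cc$, since the gap $(\cc-c)/r$ it exploits vanishes. A bound uniform near $\cc$ requires the blow-up of \lemref{thm:xccbounded}. There $u^n=(v(\cdot,c_n)-g)/(\cc-c_n)\ge 0$ converges to a solution of $\LL_{\cc}u-\TT u=g'-1$ on $\R^+$ with $u'\le 0$, and $g'(+\infty)=0$ forces $u(+\infty)=-1/r<0$, a contradiction.
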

We will establish this result in Section \ref{sec:FB}.

To solve the problem \eqref{value}, we also define
the {\bf equivalent maximum rate} as
\begin{align}\label{swb}
\swb(x,c):=\max\Big\{c'\in [c,\cc] \;\Big|\; v(x,c')=v(x,c)\Big\},~~(x,c)\in\Q^{+}_{\infty}.
\end{align}
The following property is needed.
\begin{lemma}[Property of the equivalent maximum rate $\swb(\cdot)$]\label{lemma:swb}
For every $c< \cc$, the map $x\to\swb(x,c)$ is non-decreasing and right-continuous on $\R^+$. Moreover, \begin{align}\label{v_c=0}
v_c(x,\swb(x,c))=0~~{\rm if}\;c<\swb(x,c)<\cc,~x\in\R^+.
\end{align}
\end{lemma}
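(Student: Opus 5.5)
The plan rests on the structure of $v$ from \thmref{thm:u} and on \propref{proposition:freeboundary}. Since $v$ is continuous and non-increasing in $c$, the set $\{c'\in[c,\cc]\mid v(x,c')=v(x,c)\}$ is a closed interval $[c,\swb(x,c)]$. Hence the maximum in \eqref{swb} is attained, and $v(x,\cdot)$ is constant on $[c,\swb(x,c)]$ and strictly smaller beyond it.

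\textbf{Monotonicity in $x$.} The key observation is that the function $w(x):=v(x,c)-v(x,s)\ge 0$ is non-decreasing in $x$ for $c<s$. To see this, compute $w'(x)=v_x(x,c)-v_x(x,s)=-\int_c^s v_{xc}(x,\tau)\,d\tau$. It therefore suffices to show that $v_x$ is non-increasing in $c$, i.e. $v_{xc}\le 0$ in a weak sense. I would obtain this from the regime-switching approximation of Section \ref{sec:solution}: there one compares $u_x$ across consecutive regimes by the comparison principle \lemref{lem:CP1}, applied to the differentiated equation with $\JJ=\II$. Since $\p_x\TT v=\II v_x$, the derivatives satisfy an equation of the same type, and the inequality passes to the limit.

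Given this, suppose $x<y$, and write $s=\swb(x,c)$. Then $v(x,c)=v(x,s)$ means $w(x)=0$. If, for contradiction, $\swb(y,c)<s$, then $v(y,c)>v(y,s)$, so $w(y)>0$. This is consistent with monotonicity and gives no contradiction. I will therefore use the reverse quantity instead: the switching region is an upper set in $x$ by \eqref{propertysw0}.

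\textbf{A cleaner route via the free boundary.} For $s\in(c,\cc]$, I would show that the set $\{x \mid v(x,c)=v(x,s)\}$ is an interval of the form $[a_s,\infty)$. Granting this, $\swb(x,c)=\sup\{s\mid x\ge a_s\}$, and $a_s$ is non-decreasing in $s$, so $\swb(\cdot,c)$ is non-decreasing. That the set is an up-set follows from $w$ being non-decreasing and nonnegative: $w(x)=0$ and $y<x$ give $w(y)=0$, which makes it a down-set, not an up-set. So the sign of $v_{xc}$ needs care. Economically, for large $x$ one pays the maximum rate, so the equal-value set should be an up-set, which requires $v_x(x,c)\le v_x(x,s)$, i.e. $v_{xc}\ge 0$. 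This is plausible: concavity is less severe with a lower dividend rate. The main obstacle is exactly establishing $v_{xc}\ge0$ a.e. I would first try to prove it for the discrete system by showing $(u^{i+1}-u^{i})_x\ge 0$ by comparison. In $\NS$ this difference satisfies an OIDE whose source term has a sign, and where the regimes are equal the difference vanishes.

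\textbf{Right-continuity and \eqref{v_c=0}.} The level sets $\{x\mid v(x,c)=v(x,s)\}$ are closed. Hence if $x_n\downarrow x$ with $\swb(x_n,c)\ge s_0$, then $v(x_n,c)=v(x_n,s_0)$, and passing to the limit gives $\swb(x,c)\ge s_0$. Combined with monotonicity, this yields right-continuity. For \eqref{v_c=0}, $v(x,\cdot)$ is constant on $[c,\swb(x,c)]$, so its left derivative at $\swb$ is $0$; since $v_c\le 0$, this forces $v_c=0$ there wherever $v_c$ exists. For existence of $v_c$ at the point itself, I would use the point $(x,\swb)$ lying in $\SS$ on the free boundary, together with the bound $-(\ell-1)/r\le v_c$. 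It may be necessary to interpret $v_c$ as the one-sided derivative, proving that the right derivative also vanishes via $v_{xc}\ge0$ and the equation in $\NS$.
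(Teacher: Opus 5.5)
Your right-continuity argument is fine and is essentially the paper's. The other two parts each have a genuine gap.

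\textbf{Monotonicity in $x$.} Your proof rests on $v_{xc}\ge 0$, i.e. $v_x(x,c)\le v_x(x,s)$ for $c<s$. You never establish it; you call it ``the main obstacle''. The discrete sketch does not go through as stated. On $\D_i\setminus\D_{i-1}$ the difference $u_i$ satisfies only an inequality, which cannot be differentiated. The differentiated source term involves $v_{i-1}''$, whose sign is unknown at the discrete level: only the lower bound \eqref{vi''} is available, and concavity appears only in the limit. Note also that \lemref{lem:uix_b} gives only a two-sided bound on $u_i'$, not a sign.

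The missing idea is that no information on $v_{xc}$ is needed, because \lemref{lem:N} gives the result directly. Let $c_0=\swb(x,c)>c$. By maximality, $v(x,c_0)>v(x,s)$ for all $s\in(c_0,\cc]$, so $(x,c_0)\in\NS\cup(\R^+\times\{\cc\})$. Then \eqref{SS} with $d_0=c$ gives $v(y,c)=v(y,c_0)$ for all $y\ge x$, hence $\swb(y,c)\ge\swb(x,c)$.

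The route via \eqref{propertysw0} that you dismissed also works. For each $c'\in[c,c_0)$ we have $(x,c')\in\SS$, so $x\ge\sw(c')$. Hence $(y,c')\in\SS$ for every $y>x$, i.e. $v(y,\cdot)$ is constant on a right neighbourhood of every $c'\in[c,c_0)$. By continuity, $v(y,\cdot)$ is then constant on $[c,c_0]$.

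\textbf{The identity \eqref{v_c=0}.} The whole content here is that $v_c$ exists at the specific point $(x,\swb(x,c))$. Lipschitz continuity in $c$ gives existence only a.e. A non-increasing Lipschitz function can perfectly well be flat on $[c,s]$ and then decrease with slope bounded away from $0$ to the right of $s$. The vanishing left derivative is trivial; what must be shown is that the right difference quotient tends to $0$. This requires a one-sided second-order bound in $c$, which is exactly \lemref{lem:ucL} (inherited from \lemref{lem:uci_b}).

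Set $s=\swb(x,c)$ and $0<\Ds<\min\{s-c,\cc-s\}$. Apply \eqref{ucL} with $c_*=s-\Ds$, $c_*+\Dc_*=c^*-\Dc^*=s$ and $c^*=s+\Ds$. Since $v(x,\cdot)$ is constant on $[c,s]$, this gives
\[
0\le \frac{v(x,s)-v(x,s+\Ds)}{\Ds}\le \frac{v(x,s-\Ds)-v(x,s)}{\Ds}+2B\Ds=2B\Ds .
\]

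The tools you propose cannot supply this. A lower bound $v_c\ge-(\ell-1)/r$ cannot exclude such a kink. Moreover, $(x,\swb(x,c))$ lies in $\NS$ by maximality, not in $\SS$ as you assert. The alternative ``via $v_{xc}\ge 0$ and the equation in $\NS$'' is not carried out. An a.e.\ or one-sided version would not suffice either: \eqref{v_c=0} is used pointwise at exactly these points in \eqref{vcdct} of the verification theorem.
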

Its proof is given in Section \ref{proof:lemma:swb}.

Together with \lemref{lem:lipchitz}, we now provide a complete answer to the problem \eqref{value}.
\begin{theorem}[Optimal strategy in $\Q^{+}_{\infty}$]\label{theorem:strategy}
Let $v$ be the unique bounded strong solution to \eqref{v_pb00} given in \thmref{thm:u}. Then $v$ coincides with the optimal value to the problem \eqref{value} on $\Q^{+}_{\infty}$.
Moreover, given $(x,c)\in \Q^{+}_{\infty}$, $\{(C^*_t, D^*_t)\}_{t\geq 0}$ is an optimal strategy for the problem \eqref{value}, where the triple $\{(X^*_t, C^*_t, D^*_t)\}_{t\geq 0}$ is defined by
\[\begin{cases}
\displaystyle{X^*_t=x+\int_0^t(\mu-C^*_s) \ds -\sum_{i=1}^{N_t} Z_i + D^*_t,}\\[5mm]
C^*_t= \swb\Big(\max\limits_{s\in[0,t]}X^*_s,c\Big),\\[5mm]
\displaystyle{D^*_t=\sup\limits_{0\leq s\leq t}\Big(\sum_{i=1}^{N_s} Z_i-x-\int_0^s(\mu-C^*_\theta) \d \theta \Big)^+}.
\end{cases}
\]
\end{theorem}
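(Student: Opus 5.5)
The argument is a verification theorem in two halves. In the first half I show that $v\geq$ the objective under every admissible strategy. In the second half I show that the feedback triple $(X^*,C^*,D^*)$ is admissible and attains $v$.

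\textbf{Step 1: upper bound.} Fix $(x,c)\in\Q^{+}_{\infty}$ and $(C,D)\in\Pi_{x,c}$. Extend $v$ to $x<0$ by $v(x,c)=v(0,c)+\ell x$, in line with \lemref{lem:lipchitz}. I apply a generalized Itô (change-of-variable) formula to $e^{-rt}v(X_t,C_t)$. This is legitimate because $v(\cdot,c)\in C^1$ and $v$ is Lipschitz and non-increasing in $c$ (\thmref{thm:u}). Moreover $X$ has finite variation (drift, jumps, and $D$), and $C$ is monotone, so no second derivative is needed.

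The resulting terms are handled as follows:
\begin{itemize}
\item The drift term is $-(\mu-C_s)v_x$.
\item The $\d C$ part contributes $\int e^{-rs}\,\d_c v\le 0$, since $v$ is non-increasing in $c$; jumps of $C$ are handled the same way through $v(X,C_s)-v(X,C_{s-})\le 0$.
\item The $\d D$ part contributes at most $\ell\,\d D$, because $v_x\le\ell$; this also covers the jumps of $D$, by the mean value theorem.
\item The claim jumps give $v(X_{s-}-Z,C)-v(X_{s-},C)$. After a simultaneous injection, $X_s\geq 0$, and the bound $v_x\leq \ell$ shows that a claim overshooting below $0$ followed by an injection costs at least $\ell$ per unit. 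Compensating with intensity $\la$ and using $h$ produces exactly $\TT v - h$.
\end{itemize}

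Combining these with $\LL_c v-\TT v+h-c\geq0$ (item 3 of \defref{def:solution}), I obtain
\[
\E\big[e^{-rt}v(X_t,C_t)\big]\le v(x,c)-\E\Big[\int_0^t e^{-rs}C_s\ds-\ell\int_0^t e^{-rs}\d D_s\Big].
\]
The local martingale is a true martingale because $v$ is bounded, $v_x$ is bounded, and $\E\sum e^{-rs}Z_i<\infty$ since $\ga<\infty$. Boundedness of $v$ then lets me send $t\to\infty$, which gives $v\geq V$.

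\textbf{Step 2: the candidate.} First I show that the feedback system is well defined. The running maximum $M_t=\max_{s\le t}X^*_s$ and the map $\swb(\cdot,c)$, which is non-decreasing and right-continuous by \lemref{lemma:swb}, make $C^*$ non-decreasing, càdlàg, adapted, and valued in $[c,\cc]$. I construct the solution pathwise between claim times:
\begin{itemize}
\item On each inter-claim interval the surplus satisfies an ODE with drift $\mu-C^*\geq\mu-\cc>0$, so it increases and $M_t=X^*_t$ whenever a new maximum is set.
\item The Skorokhod map $D^*$ keeps $X^*\geq 0$, and it increases only at claim times when the surplus would go negative.
\end{itemize}
So $(C^*,D^*)\in\Pi_{x,c}$.

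\textbf{Step 3: equality.} I rerun Step 1 and check that each inequality is an equality along the candidate.
\begin{itemize}
\item Injections occur only at jumps that overshoot below $0$, where the extension $v=v(0,\cdot)+\ell x$ is exact. So the $\d D$ term is an equality.
\item Whenever $C^*$ is strictly increasing, the state is on the new-maximum set and $v(X,C^*)$ stays constant, by the definition of $\swb$ and \eqref{v_c=0}. So the $\d_c v$ contribution vanishes.
\item The pair $(X^*_s,C^*_s)$ must lie where the equation \eqref{NS equation} holds. If $C^*_s=\swb(M_s,c)<\cc$ and $X^*_s\le M_s$, then $(X^*_s,C^*_s)$ lies in $\NS$ or on its boundary. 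This uses the separation \eqref{propertysw1} from \propref{proposition:freeboundary}, together with the facts that $\swb$ jumps past the switching set and $\sw$ is continuous. There $\LL_c v-\TT v+h-c=0$ by \thmref{thm:u}.
\item When $C^*=\cc$, the equation holds because $v(\cdot,\cc)=g$ solves \eqref{g_eq}.
\end{itemize}
Since the equality fails at most on a Lebesgue-null set of times, the dynamics give equality, and letting $t\to\infty$ yields $v=J(C^*,D^*)$.

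The main obstacle is the third item of Step 3: proving that the state $(X^*_s,C^*_s)$ never sits in the interior of $\SS$, except for a null set of times. The first thing I would try is to show $X^*_s\le\sw(C^*_s)$ for all $s$, by combining two facts. First, $\swb(M,c)$ equals the smallest $c'$ with $\sw(c')\geq M$; here I would use the monotonicity of $\swb$ in $x$ and the continuity of $\sw$. Second, $X^*_s\le M_s$.
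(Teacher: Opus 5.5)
Your argument is the paper's verification argument. It\^o's formula for $e^{-rt}v(X_t,C_t)$, together with item 3 of \defref{def:solution}, $v_x\le\ell$ and monotonicity in $c$, gives $v\ge V$. Along $(C^*,D^*)$ every inequality becomes an equality: there is no continuous injection, injections occur only at overshooting claims, and $v(X^*,\cdot)$ is flat wherever $C^*$ moves. Your extension $v(x,c)=v(0,c)+\ell x$ for $x<0$ is an equivalent repackaging of the paper's splitting $\Delta D_t=\Delta' D_t+(\Delta D_t-\Delta' D_t)$. Your pathwise construction of the feedback triple is, if anything, more careful than the paper's admissibility step.

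The only real difference is the step you flag as the main obstacle. The paper settles it in two lines, without continuity of $\sw$ or any boundary analysis.
\begin{itemize}
\item $C^*_t=\swb(M_t,c)$ is the \emph{largest} rate with $v(M_t,C^*_t)=v(M_t,c)$. Monotonicity in $c$ then gives $v(M_t,s)<v(M_t,C^*_t)$ for all $s\in(C^*_t,\cc]$, i.e.\ $(M_t,C^*_t)\in\NS$ whenever $C^*_t<\cc$.
\item $\SS$ is closed under increasing $x$ (\lemref{lem:N1}, equivalently \eqref{propertysw1}), and $X^*_t\le M_t$. Hence $(X^*_t,C^*_t)\in\NS$, so \eqref{NS equation} holds at \emph{every} such $t$.
\end{itemize}

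Your route through $\sw$ needs repair in two places.

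First, the characterization ``$\swb(M,c)$ is the smallest $c'\ge c$ with $\sw(c')\ge M$'' fails whenever $\sw\equiv M$ on some $[c_1,c_2]$ with $c_2<\cc$. For $x>M$, maximality in $\swb$ and \eqref{propertysw1} force $\swb(x,c_1)>c_2$. By right-continuity, $\swb(M,c_1)\ge c_2$, while the smallest such $c'$ is $c_1$. Nothing in the paper excludes such flat pieces of $\sw$. What you actually need is only $\sw(\swb(M,c))\ge M$, and that is exactly the $\NS$-membership in the first bullet above.

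Second, the points with $X^*_s=\sw(C^*_s)$ cannot be treated as negligible. Whenever $C^*$ rises continuously along the running maximum, the state sits exactly on the free boundary for a set of times of positive measure. There you must use either the $\NS$-membership or continuity of $x\mapsto(\LL_c v-\TT v+h)(x,c)$ up to $x=\sw(c)$. The latter is legitimate because $\sw(C^*_s)\ge M_s>0$ for $s>0$.
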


{
The optimal strategy has an intuitive interpretation: The dividend rate $C^*_t$ is increased only when the running maximum surplus $\max\limits_{s\in[0,t]}X^*_s$ reaches a new high, and it is raised precisely to the level $\swb\Big(\max\limits_{s\in[0,t]}X^*_s,c\Big)$. Since the transaction cost $\ell>1$, one shall always use the minimum effect to avoid bankruptcy.
Thus, capital injections
$\Delta D^*_t$ occur only when a claim would otherwise drive surplus negative, and they are injected just enough to keep surplus nonnegative. This reflects a ``barrier-type'' policy modulated by the ratcheting constraint.
\bigskip\\
}
\begin{proofof}{\thmref{theorem:strategy}}
The proof is divided into four steps. Since several arguments are analogous to those in the proof of Theorem \ref{theorem:boundaryvalue}, we omit them for brevity. 
\paragraph{Step 1 (Upper bound: $v\geq V$).}
Given any $(x,c)\in \Q^{+}_{\infty}$ and any strategy $\{(C_t,D_t)\}_{t\geq0}\in \Pi_{x,c}$,
we get
\begin{align}\nonumber
v(x,c)
=&~
\E \Bigg[e^{-c T}v(X_{T},C_T)\Bigg]-\E \Bigg[\sum_{0\leq t\leq T}e^{-r t}\Big[v (X_t,C_t)-v\Big(X_t-(\Delta D_t-\Delta' D_t), C_{t-}\Big)\Big]\Bigg]\\\nonumber
&+\E \Bigg[\int_0^{T}e^{-r t}\big(\LL_{c} v - \TT v + h - c \big)(X_{t-},C_{t-}) \dt\Bigg]
-\E \Bigg[\int_0^{T}e^{-r t} v_c(X_{t-},C_{t-})\d C^c_t\Bigg]\\\label{vE}
&-\E \Bigg[\int_0^{T}e^{-r t} v_x(X_{t-},C_{t-})\d D^c_t\Bigg]
- \ell \E \Bigg[\sum_{0\leq t\leq T}e^{-r t}\Delta' D_t \Bigg]
+ \E\Bigg[\int_0^T e^{-r t} C_{t-} \dt\Bigg],
\end{align}
where
$$ \Delta' D_t:=(Z_{N_t} \Delta N_t-X_{t-})^+=(Z_{N_t} -X_{t-})^+ \Delta N_t.$$
Using \eqref{v_pb00}, $\d C^c_t \geq 0$ and $\Delta D_t\geq \Delta' D_t$, we obtain
\begin{align}\label{v>=}
v(x,c)
\geq
\E \Bigg[e^{-c T}v(X_{T},C_T)\Bigg]
+ \E\Bigg[\int_0^T e^{-r t} C_{t-} \dt\Bigg]
- \E\Bigg[\int_0^T e^{-r t} \ell \d D_t\Bigg].
\end{align}
Now sending $T \to +\infty$ yields
\begin{align*}
v(x,c)
\geq \E\Bigg[\int_0^\infty e^{-r t} C_{t} \dt-\int_0^\infty e^{-r t} \ell \d D_t\Bigg].
\end{align*}
Since $\{{C_t,D_t}\}_{t\geq 0}\in \Pi_{x,c}$ is arbitrary, we conclude $v(x,c)\geq V(x,c)$.

\paragraph{Step 2 (Admissibility).} 
We show that $\{(C_t^*, D_t^*)\} \in \Pi_{x,c}$.
\begin{itemize}
\item By \lemref{lemma:swb}, the mapping $y \mapsto \swb(y,c)$ is non-decreasing and right-continuous. Since $Y_t^* := \max_{0\le s\le t} X_s^*$ is non-decreasing and right-continuous, $C_t^* = \mathfrak{M}(Y_t^*, c)$ inherits these properties and satisfies $C_t^* \in [c,\overline{c}]$.

\item The process $D_t^*$ is defined as the minimal reflection that keeps $X_t^* \ge 0$; it is non-decreasing, right-continuous.
\end{itemize}
Hence $\{(C_t^*, D_t^*)\} \in \Pi_{x,c}$.

\paragraph{Step 3 (Lower bound and optimality: $v\leq V$).}
We now prove 
that inequality \eqref{v>=} is an equality when $\{{C_t,D_t}\}_{t\geq 0}=\{{C^*_t,D^*_t}\}_{t\geq 0}$, which will complete the proof of the theorem.

Recall $Y^*_t=\max\limits_{s\in[0,t]}X^*_s.$
We first prove
\begin{align}\label{LV*}
\big(\LL_{c} v - \TT v + h - c \big)(X^*_{t},C^*_{t})=0.
\end{align}
If $C^*_t<\cc$, then by the definition of $C^*_{t}$, we have $(Y^*_t, C^*_{t})\in \NS$. Since $X^*_t\leq Y^*_t$, it follows from \eqref{propertysw1} that $(X^*_t,C^*_t)\in \NS$, so \eqref{LV*} holds by \eqref{NS equation}.
If $C^*_t= \cc$, then $v(\cdot,C^*_{t})=g(\cdot)$ on $\R^+$, and \eqref{LV*} still holds by virtue of \eqref{g_eq}.

Second, it is easy to verify that
\begin{align}\label{DD*}
\Delta D^*_t = \Delta' D^*_t,~{ {D^*_t}^c=0,}~~ t \geq 0.
\end{align}

Third, we prove
\begin{align}\label{vcdct}
-v_c(X^*_{t-},C^*_{t-})\d C^{*c}_t=0,~~ t\neq \tau_1,\;\tau_2,
\end{align}
where $\tau_1=\inf\{t\geq 0\mid C^*_t > c\}$ and $\tau_2=\inf\{t\geq 0\mid C^*_t = \cc\}.$
For $t<\tau_1$ or $t>\tau_2$, we have $C^*_t = c$ or $C^*_t = \cc$, respectively, which implies $\d C^*_t =0$, so \eqref{vcdct} holds trivially.
We now assume $\tau_1< t <\tau_2$, so that $c < C^*_t < \cc$.
Note that \eqref{DD*} implies $\Delta X^*_t = \Delta' D^*_t -Z_{N_t}\Delta N_t\leq 0$.
If $X^*_t<Y^*_t$, then there exists a random time $\delta>0$ such that $X^*_s<Y^*_t$ for $s\in [t,t+\delta)$. It then follows $Y^*_s=Y^*_t$ and $C^*_s=C^*_t$ for $s\in [t,t+\delta)$, so \eqref{vcdct} holds.
If $X^*_t= Y^*_t$, then since $X^*_t \leq X^*_{t-} \leq Y^*_{t-} \leq Y^*_t$, we have $X^*_{t-}=X^*_t= Y^*_t$. Moreover, by the definition of $C^*_t$ and \eqref{v_c=0} we have $v_c(Y^*_t,C^*_t)=0$.
This together with $C^*_t \geq C^*_{t-}\geq c$ and $v(Y^*_t,s)=v(Y^*_t,c)$ for all $s\in [c,C^*_t)$ implies $v_c(X^*_{t-},C^*_{t-})=v_c(Y^*_{t},C^*_{t-})=0$, so \eqref{vcdct} also holds.

Fourth, we prove
\begin{align}\label{vv}
v(X^*_t,C^*_t)=v(X^*_t,C^*_{t-}).
\end{align}
This is trivial when $C^*_{t}=C^*_{t-}$.
Now suppose $C^*_{t}>C^*_{t-}$, then the preceding discussion shows that $X^*_t=Y^*_t$.
By the definition of $C^*_{t}$, we also have $v(Y^*_t,s)=v(Y^*_t,c)$ for all $s\in [c,C^*_t)$.
Combining above gives \eqref{vv} as $C^*_{t}>C^*_{t-}\geq c$.

Combining \eqref{LV*}, \eqref{DD*}, \eqref{vcdct}, and \eqref{vv}, we see 
\eqref{vE} becomes
$$
v(x,c)=
\E \Bigg[e^{-c T}v(X^*_{T},C^*_T)\Bigg]
+ \E\Bigg[\int_0^T e^{-r t} C^*_{t} \dt\Bigg]
- \E\Bigg[\int_0^T e^{-r t} \ell \d D^*_t\Bigg].
$$
By sending $T\to+\infty$, we obtain
$$
v(x,c)= \E\Bigg[\int_0^\infty e^{-r t} C^*_{t} \dt - \int_0^\infty e^{-r t} \ell \d D^*_t\Bigg].
$$
This establishes $v(x,c)\leq V(x,c)$.

\paragraph{Step 4 (Conclusion).}
Combining Steps 2 and 3 yields $
v(x,c)=V(x,c)$ and confirms the optimality of $\{{C^*_t,D^*_t}\}_{t\geq 0}$.
\end{proofof}

\section{Solvability of the HJB equation on $\Q^{+}_{\infty}$} \label{sec:solution}
This whole section is devoted to the
proof of \thmref{thm:u}.

To construct a solution to \eqref{v_pb00} with nice properties, we first introduce a sequence of regime switching problems which can be regarded as a sequence of option pricing problems. We then study its properties in subsection \ref{sec:approximation}.
In subsection \ref{sec:solvability}, we pass to the limit to get a solution to \eqref{v_pb00} and complete the proof of \thmref{thm:u}.

\subsection{Local HJB equation}
Since the domain $\Q^{+}_{\infty}$ is unbounded, it is not easy to study \eqref{v_pb00} directly.
We now introduce a simpler local version (called local HJB equation) of \eqref{v_pb00}:
\begin{align}\label{v_pb}
\begin{cases}
\min\{\LL_c v-\TT v+h-c, \;-v_c\}=0, & (x,c)\in\Q^{+}_{\uc}:=\R^+\times [\uc,\cc],\medskip\\
v(x,\cc)=g (x), & x\in\R^+.
\end{cases}
\end{align}
We emphasis that this VI looks simpler than \eqref{v_pb00} since the requirement $v_x\leq \ell$ is removed in above. Similar to \defref{def:solution}, we define the strong solution to \eqref{v_pb}.
\begin{definition}[Strong solution to \eqref{v_pb}]\label{def:solution2}
We call $v$ is a strong solution to the VI \eqref{v_pb} if the follows hold:
\begin{enumerate}
\item $v\in\A_{\uc}:=\Big\{u:\Q^{+}_{\uc}\mapsto \R \;\Big|\; u \in C(\Q^{+}_{\uc})
\;\hbox{and}\; u(\cdot,c) \in C^1(\R^+) \;\hbox{for every } c\in [\uc,\cc]\Big\}$;
\item $v$ is non-increasing w.r.t. $c$;
\item For each $c\in[\uc,\cc]$,
\begin{align}\label{-lv>=02}
\LL_c v-\TT v+h-c\geq 0 \;\hbox{ in }\; \R^+;
\end{align}
\item If $v(x,c)>v(x,s)$ holds for all $s\in(c,\cc]$ at a point $(x,c)\in\Q^{+}_{\uc}$, then
\begin{align}\label{-lv=02}
\big(\LL_{c} v - \TT v + h - c \big)(x,c)=0;
\end{align}
\item For all $x\in\R^+$, $v(x,\cc)=g(x)$.
\end{enumerate}
\end{definition}

The main result of this section is
\begin{proposition}\label{prop:u}
For any $\uc<\cc$, there exists a unique bounded strong solution $v$ to the VI \eqref{v_pb}. On $\Q^{+}_{\uc}$, it satisfies
\begin{align}\label{v}
\frac{\cc-\la\ell\ga}{r}\leq v \leq &~ \frac{\cc}{r},\\\label{vx}
0\leq v_x\leq &~\ell,\\\label{vxx}
-\frac{\la\ell}{\mu-\cc}\leq v_{xx}\leq &~ 0\quad{\rm a.e.,}\\\label{vc}
-\frac{\ell-1}{r}\leq v_c\leq &~ 0\quad{\rm a.e..}
\end{align}
Moreover, the solution does not relay on the choice of $\uc$.
\end{proposition}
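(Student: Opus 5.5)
We will construct the solution of \eqref{v_pb} by discretizing the dividend interval, following the regime-switching approach announced at the start of this section. For $N\in\mathbb N$ set $c_i:=\cc-i(\cc-\uc)/N$, $i=0,\dots,N$. We then define $v^N_0:=g$ and, recursively for $i=1,\dots,N$, let $v^N_i$ solve the obstacle problem
\[
\min\{\LL_{c_i} v^N_i-\TT v^N_i+h-c_i,\; v^N_i-v^N_{i-1}\}=0,\quad x\in\R^+.
\]
This is the discrete analogue of $-v_c\ge 0$: the rate can only be switched upward, to $c_{i-1}$. Each one-dimensional OIDE obstacle problem will be solved by a penalization or monotone iteration. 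Uniqueness comes from the second alternative of \lemref{lem:CP1} with $\eta=v^N_{i-1}$ and $\JJ=\TT$.

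\textbf{A priori estimates, uniform in $N$ and $i$.}
\begin{itemize}
\item \emph{Bounds on $v^N_i$.} The bounds \eqref{v} follow from \lemref{lem:CP1} by comparing with the constants $\cc/r$ and $(\cc-\la\ell\ga)/r$. The lower constant is admissible because $h\le\la\ell\ga$; it is a subsolution of the obstacle problem, and $c_i\le\cc$ handles the upper bound.
\item \emph{Bounds on the $x$-derivative.} Differentiating in $x$ formally and using $\p_x\TT v=\II v_x$ and $h'=-\la\ell(1-F)$, the derivative $w=\p_x v^N_i$ satisfies an analogous problem with $\JJ=\II$. We apply \lemref{lem:CP1} to get $0\le w\le\ell$. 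For $w\le \ell$, note that $\ell$ satisfies $-(\mu-c)\cdot 0+(r+\la)\ell-\la\ell F(x)-\la\ell(1-F(x))=r\ell>0$, so it is a supersolution. The inductive step uses $\p_x g\le\ell$ from \eqref{g'}.
\item \emph{Concavity.} $v_{xx}\le0$ follows the same way, one derivative higher, using \eqref{g''}. Here the monotonicity of $p$ in \eqref{C1} is what makes $h''=\la\ell p$ compatible: we differentiate $\TT$ twice and produce a boundary term $\la p(x)(v_x(0)-\ell)\le 0$.
\item \emph{Lower bound on $v_{xx}$.} This is read off the equation wherever it holds with equality: $(\mu-c)v_{xx}=(r+\la)v_x-\II v_x+h'\ge -\la\ell$. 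On contact sets we inherit the bound from $v^N_{i-1}$.
\item \emph{Bounds on the difference quotients in $c$.} The discrete derivative bound $0\le v^N_i-v^N_{i-1}\le \frac{\ell-1}{r}(c_{i-1}-c_i)$ is the analogue of \eqref{vc}. For it, we compare $v^N_{i-1}+\frac{\ell-1}{r}\Delta c$ with $v^N_i$: shifting the rate by $\Delta c$ changes the operator by $\Delta c\,(v_x-1)\le(\ell-1)\Delta c$, and $r\cdot\frac{\ell-1}{r}\Delta c$ absorbs this.
\end{itemize}

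\textbf{Passage to the limit.} We interpolate linearly in $c$. The uniform $W^{2,\infty}$ bounds in $x$ and the Lipschitz bound in $c$ give, via Arzel\`a--Ascoli, local uniform convergence of $v^N$ and $v^N_x$ along a subsequence to some $v\in\A_{\uc}$ satisfying \eqref{v}--\eqref{vc}. Monotonicity in $c$ and the inequality \eqref{-lv>=02} pass to the limit directly. Items (1)--(3) and (5) of \defref{def:solution2} are then immediate.

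\textbf{Main obstacle: property (4).} Suppose $v(x,c)>v(x,s)$ for all $s\in(c,\cc]$. We must show that the equation holds at $(x,c)$. By continuity, $v(x,c)>v(x,c+\delta)+\epsilon$ for some $\delta,\epsilon>0$ in a neighborhood of $x$. For large $N$ the discrete solutions then satisfy $v^N_i>v^N_{j}$ for $c_j\ge c+\delta$. The subtle point is that strict inequality against one step, $v^N_i>v^N_{i-1}$, is what is needed, not strict inequality against far steps. We will handle this using the nonincreasing monotone chain: if $v^N_i=v^N_{i-1}$ at a point near $x$, we propagate along the indices. Because the obstacle problem at level $i-1$ must itself be in contact or satisfy the equation, we obtain either a contact chain reaching level $c+\delta$, which contradicts the gap, or an index at which the equation holds. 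Using the Lipschitz bound in $c$ and the continuity of $\LL_c v-\TT v$ in $(x,c)$ (only $v_x$ appears, and it converges uniformly), the equation then holds in the limit at $(x,c)$.

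Uniqueness follows as in \lemref{lem:CP00}, by comparing along each $c$-slice, and hence the solution is independent of $\uc$.
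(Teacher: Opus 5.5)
\textbf{Gap: concavity of the discrete solutions.} Your concavity step does not work. You cannot obtain $\p_{xx}v^N_i\le 0$ ``the same way, one derivative higher''. That argument works for $g$ only because $g$ satisfies the equation on all of $\R^+$. The discrete $v^N_i$ satisfies it only on $\D_i=\{v^N_i>v^N_{i-1}\}$.

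\begin{itemize}
\item \emph{Comparison on $\D_i$ fails.} At contact points $v^N_i-v^N_{i-1}$ attains its minimum $0$. This gives $\p_{xx}v^N_i\ge\p_{xx}v^N_{i-1}$, which is exactly how the lower bound $-\la\ell/(\mu-\cc)$ is inherited, but it gives no upper bound. In general $\p_{xx}v^N_i$ is discontinuous at $\p\D_i$, with the larger value on the $\D_i$ side, so it is not in $W^{1,\infty}$. Its supremum can be approached from inside $\D_i$ at a free-boundary point whose right neighbourhood lies in the contact set. There no equation is available, while the argument of \lemref{lem:CP1} needs the inequality on $[x_0,x_0+\delta]$.
\item \emph{Penalization fails too.} Differentiating \eqref{vje_eq} twice gives $\LL_{c_j}(v_j^\ep)''-\II(v_j^\ep)''+\beta_\ep'(v_j^\ep)''=\la\big((v_j^\ep)'(0)-\ell\big)p-\beta_\ep''\big((v_j^\ep)'-v_{j-1}'\big)^2+\beta_\ep'v_{j-1}''$. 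The middle term is $\ge 0$ because $\beta_\ep''\le0$, which is the wrong sign for comparison with $0$. You cannot choose $\beta_\ep$ convex, since a convex nondecreasing function vanishing on $[\ep,\infty)$ is nonnegative.
\end{itemize}

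Nothing forces $v^N_i$ to be concave, and the paper never claims it. You therefore have no uniform $W^{2,\infty}$ bound, and three parts of your limit step are unsupported:
\begin{itemize}
\item the inequality $v_{xx}\le0$ in \eqref{vxx};
\item $v(\cdot,c)\in C^1(\R^+)$, and hence $v\in\A_{\uc}$, since a uniform limit of uniformly semiconvex Lipschitz functions is only semiconvex;
\item the uniform convergence of $v^N_x$ on which your proof of property (4) rests.
\end{itemize}

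\textbf{What the paper does instead.} The missing idea is probabilistic. Read $v^N_i$ as the value of the ratcheting problem with rates restricted to the grid $\{c^N_i,\dots,c^N_0\}$. Average two $\ep$-optimal strategies for $x_1$ and $x_2$. The dynamics and payoff are linear, so the average is admissible for $\frac{x_1+x_2}{2}$, and its rates lie on the finer grid. This gives $\frac12\big(v^N_i(x_1)+v^N_i(x_2)\big)\le v^{N+1}_{2i}\big(\frac{x_1+x_2}{2}\big)$. Grid inclusion gives $v^N_i\le v^{N+1}_{2i}$. Passing to the limit along dyadic rates and using continuity yields concavity of $v(\cdot,c)$ \emph{only in the limit}. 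Combined with the uniform semiconvexity bound, this gives $v(\cdot,c)\in W^{2,\infty}\subseteq C^1$.

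For property (4) the paper then avoids any convergence of derivatives.
\begin{itemize}
\item At a discrete level where the equation holds at $x$, it writes $K_n=(v^N_k)'(x)$ in terms of zero-order quantities that converge.
\item It passes the quadratic minorant $v^N_k(y)\ge v^N_k(x)+K_n(y-x)-\frac{\la\ell}{2(\mu-\cc)}(y-x)^2$ to the limit.
\item It uses differentiability of the limit to conclude $v_x(x,c)=\lim K_n$.
\end{itemize}
Your chain argument for finding a level with $v^N_k(x)>v^N_{k-1}(x)$ and $c_k\downarrow c$ is fine. It is the passage to the limit afterwards that needs this replacement.
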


Clearly, the solution given in \propref{prop:u} solves \eqref{v_pb00} on $\Q^{+}_{\uc}$, so by uniqueness it coincides with the solution of \eqref{v_pb00}.
Since the solution does not relay on the choice of $\uc$, we can uniquely extend it to the domain $\Q^{+}_{\infty}$, giving a solution to \eqref{v_pb00}.
Therefore, \propref{prop:u} implies \thmref{thm:u}. From now on, we focus on the construction of a solution to \eqref{v_pb}
and the proof of \propref{prop:u}.

\subsection{Approximation by a regime switching system}\label{sec:approximation}
We omit the proof of uniqueness for \eqref{v_pb} since it is similar to that of \lemref{lem:CP00}.
Due to uniqueness, one can solve \eqref{v_pb} backwardly along the $c$ direction.
In the rest part of this section, we will construct a strong solution to the local HJB equation \eqref{v_pb} with the desired properties in \propref{prop:u}.

Suppose the dividend payout rate can only take the following discrete values
$$c_i=\cc-(i-1)\Dc,\quad i=1,2,\cdots,n,$$
where $\Dc=\frac{1}{n}(\cc-\uc)$. We consider the following regime switching system,
\begin{align}\label{vi_pb}
\begin{cases}
\min\{\LL_{c_i} v_i-\TT v_i+h-c_i, \; v_i-v_{i-1}\}=0, & x\in\R^+,\quad i=1,2,\cdots,n,\medskip\\
v_0(x)=g (x), & x\in\R^+.
\end{cases}
\end{align}
Here, $v_i(x)$ can be regarded as an approximation of $v(x,c_i)$.

\begin{lemma}\label{lem:vi}
For $i=1,2,\cdots,n,$ the system \eqref{vi_pb} has a unique solution $v_i\in W^{1,\infty}(\R^+)$, which satisfies
\begin{align}\label{vi}
\frac{\cc-\la\ell\ga}{r}\leq &~ v_i\leq \frac{\cc}{r},\\\label{vix}
0\leq&~ v_i'\leq \ell.
\end{align}
\end{lemma}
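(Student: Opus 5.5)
The proof goes by induction on $i$. Assume $v_{i-1}\in W^{1,\infty}(\R^+)$ is known and satisfies \eqref{vi}--\eqref{vix}; for $i=1$ this is $v_0=g$, covered by \lemref{lem:g}. At step $i$ we must solve the obstacle problem
\[
\min\{\LL_{c_i} v-\TT v+h-c_i,\; v-v_{i-1}\}=0 \quad\text{on } \R^+ .
\]
This is a first-order OIDE with a nonlocal term, and the drift $-(\mu-c_i)<0$ is nondegenerate because $c_i\le\cc<\mu$.

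\textbf{Uniqueness.} It is immediate from the second alternative of \lemref{lem:CP1}, applied with $\JJ=\TT$, $H(x,y)=h(x)-c_i$ and $\eta=v_{i-1}$. Applying it in both directions gives equality of any two solutions.

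\textbf{Existence.} I would use a monotone iteration or penalization scheme, run on a bounded interval $[0,N]$ first and then extended. The simplest route is to freeze the nonlocal term. Given $w$, the operator $\TT w$ depends only on values of $w$ on $[0,x]$. So the equation $\min\{-(\mu-c_i)v'+(r+\la)v-\TT w+h-c_i,\; v-v_{i-1}\}=0$ is an ODE obstacle problem that can be solved explicitly. Equivalently, $v(x)=\max\{v_{i-1}(x),\ \text{solution of a linear first-order ODE}\}$ can be written via the Gronwall-type representation
\[
v(x)=\sup_{\tau}\Big[\int_x^{\tau}e^{-\frac{r+\la}{\mu-c_i}(s-x)}\tfrac{\TT w-h+c_i}{\mu-c_i}(s)\ds+e^{-\frac{r+\la}{\mu-c_i}(\tau-x)}v_{i-1}(\tau)\Big],
\]
an optimal-stopping form along the deterministic flow $x\mapsto x+(\mu-c_i)t$. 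The map $w\mapsto v$ is then a contraction in sup norm with factor $\la/(r+\la)<1$, since $\TT$ has norm $\la$. Banach's fixed point theorem gives a bounded solution, and $W^{1,\infty}$ regularity follows from the equation, because $v'$ is bounded wherever $v>v_{i-1}$ and equals $v_{i-1}'$ a.e. on the contact set.

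\textbf{Bounds.} For \eqref{vi}, the lower bound follows from $v_i\ge v_{i-1}\ge(\cc-\la\ell\ga)/r$. For the upper bound, check that the constant $\cc/r$ is a supersolution: $\LL_{c_i}(\cc/r)-\TT(\cc/r)+h-c_i=\cc-c_i+h\ge0$ and $\cc/r\ge v_{i-1}$. Then \lemref{lem:CP1} gives $v_i\le\cc/r$.

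For \eqref{vix}, $v_i'\ge0$ should follow from a shift comparison: $\psi(x)=v_i(x+\delta)$ is a supersolution relative to $v_i$, because $h$ is nonincreasing, $\TT$ is monotone, and $v_{i-1}$ is nondecreasing. The upper bound $v_i'\le\ell$ is the main obstacle. I would compare $v_i(x+\delta)$ with $v_i(x)+\ell\delta$ through \lemref{lem:CP1}. The key point is that $h(x)-h(x+\delta)\le\la\ell\delta(1-F(x))$, and this combines with the $\TT$-term. Indeed $\TT$ applied to the shift gives an extra boundary piece $\la v(0)(1-F)$ versus $\la(v(\delta))$, and these should cancel exactly as in the proof of \eqref{g'} in \lemref{lem:g}. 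At the same time the obstacle satisfies $v_{i-1}(x+\delta)\le v_{i-1}(x)+\ell\delta$. The delicacy is that the shifted function's $\TT$ evaluated at $x$ involves $v(\cdot+\delta)$ on $[\delta,x+\delta]$ and the boundary value at $\delta$, so the bookkeeping with $(Z_1-x)^+$ must be done carefully. I would mirror the argument used for $g$.
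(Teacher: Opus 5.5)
\paragraph{Where the proposal holds and where it fails.} Your uniqueness argument, your contraction/optimal-stopping existence scheme and your proof of \eqref{vi} are fine. The existence scheme is a legitimate and more self-contained alternative to the paper's penalization plus Leray--Schauder step, which the paper leaves to the reader.

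The gap is in \eqref{vix}, which is the real content of the lemma.

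\emph{Lower bound.} Your argument treats $\TT$ as if it commuted with translation. It does not, because of the reflection at $0$. With $\psi=v_i(\cdot+\delta)$ and $s_Z:=(x+\delta-Z_1)^+$,
\[
\TT\psi(x)-(\TT v_i)(x+\delta)=\la\,\E\big[\big(v_i(\delta)-v_i(s_Z)\big)\mathbf{1}_{\{Z_1>x\}}\big].
\]
This is $\geq 0$ for nondecreasing $v_i$, so it enters $\LL_{c_i}\psi-\TT\psi+h-c_i$ with the wrong sign. The only compensation available is
\[
h(x)-h(x+\delta)=\la\ell\,\E\big[(\delta-s_Z)\mathbf{1}_{\{Z_1>x\}}\big],
\]
and it suffices only if you already know $v_i(\delta)-v_i(s)\le\ell(\delta-s)$ on $[0,\delta]$. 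That is the upper bound, so the lower bound cannot come first.

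\emph{Upper bound.} Comparing $\psi$ with $v_i+\ell\delta$ via \lemref{lem:CP1} with $\JJ=\TT$ does not close. Adding the constant $\ell\delta$ gains only $(r+\la)\ell\delta-\la\ell\delta=r\ell\delta$. Where $v_i$ itself solves the equation, you would need
\[
\la\E\big[\{\ell(\delta-s_Z)-(v_i(\delta)-v_i(s_Z))\}\mathbf{1}_{\{Z_1>x\}}\big]\le r\ell\delta .
\]
Near $x=0$ the left side is about $\la\ell\delta-\la(v_i(\delta)-v_i(0))$. This exceeds $r\ell\delta$ when $\la>r$ unless you have a lower bound on $v_i'$ near $0$, which you do not.

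\emph{Why \eqref{g'} is different.} The exact cancellation in the proof of \eqref{g'} comes from differentiating: $\p_x(\TT v)=\II v_x$, $h'=-\la\ell(1-F)$, and $\LL_c\ell-\II\ell=r\ell+\la\ell(1-F)$. You cannot do this for the obstacle solution $v_i$, which is only $W^{1,\infty}$. That is exactly why the paper works with the smooth penalized equation \eqref{vje_eq}. It differentiates that equation and compares $(v_j^\ep)'$ with $0$ and $\ell$ by \lemref{lem:CP1}, using $\JJ=\II$ and $H(x,y)=\beta_\ep'(v_j^\ep-v_{j-1})\,y$, and then lets $\ep\to0$.

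\paragraph{A repair inside your own scheme.} There is a clean fix that realizes the $h$--$\TT$ cancellation you were reaching for. Show that your map $\Phi:w\mapsto v$ preserves the closed set of bounded functions
\[
K=\{w:\ 0\le w(y)-w(x)\le\ell(y-x)\ \text{for } 0\le x\le y\}.
\]
For $w\in K$, the function $f_w:=\TT w-h+c_i$ satisfies
\[
0\le f_w(y)-f_w(x)\le \la\ell\,\E\big[(y-Z_1)^+-(x-Z_1)^++(Z_1-x)^+-(Z_1-y)^+\big]=\la\ell(y-x).
\]
Write your formula as a supremum over $\theta=\tau-x\in[0,\infty]$ and set $\kappa=\frac{r+\la}{\mu-c_i}$. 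Using $v_{i-1}\in K$, each candidate value then has increment in
\[
\Big[0,\ \ell(y-x)\Big(\tfrac{\la}{r+\la}(1-e^{-\kappa\theta})+e^{-\kappa\theta}\Big)\Big]\subseteq[0,\ell(y-x)].
\]
Hence $\Phi(K)\subseteq K$. Since $K$ is closed under uniform convergence and contains $v_{i-1}$, the unique fixed point of the contraction lies in $K$. This gives both inequalities in \eqref{vix} at once.
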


{Lemma~\ref{lem:vi} establishes that the discrete approximation \eqref{vi_pb} is well-posed and that the solutions inherit the same qualitative properties as the conjectured value function: uniform bounds and $x$-Lipschitz continuity with constant $\ell$. These uniform estimates are essential for the subsequent limiting argument. These properties are inspired by \lemref{lem:lipchitz}.\medskip
} 

\begin{proofof}{\lemref{lem:vi}.}
The uniqueness is a consequence of \lemref{lem:CP1}.
We now show the existence.
From \thmref{lem:g} we know $v_0(x)=g(x) $ satisfies \eqref{vi} and \eqref{vix}.
We next prove the results by mathematical induction.

Suppose $v_{j-1}\in W^{1,\infty}(\R^+)$ (for some $1\leq j<n$) satisfies \eqref{vi} and \eqref{vix}, we are going to prove the VI in \eqref{vi_pb} admits a unique bounded solution $v_j\in W^{1,\infty}(\R^+)$ which still satisfies \eqref{vi} and \eqref{vix}.
This will complete the proof.

Consider the penalty approximation problem for $0<\ep<1$:
\begin{align}\label{vje_eq}
\LL_{c_j} v_j^\ep-\TT v_j^\ep+h-c_j+\beta_\ep(v_j^\ep-v_{j-1})=0\quad {\rm in}~~ \R^+,
\end{align}
where $\beta_\ep(\cdot)$ is a sequence of penalty functions indexed by $0<\ep<1$ satisfying
\begin{gather*}
\beta _{\ep}(\cdot)\in C^{\infty}(\R), \quad\beta _{\ep}(0)=-(\cc+\la^2\ell\ga/r+\la\ell\ga)<0, \quad\beta _{\ep}(x)=0\; \text{ for }\; x\geq \ep>0, \bigskip\\
\beta _{\ep
}(\cdot)\leq 0, \quad \beta _{\ep}^{\prime}(\cdot)\geq 0,\quad \beta _{\ep
}^{\prime \prime}(\cdot)\leq 0, \quad\lim\limits_{\ep \rightarrow 0+} \beta _{\ep}(x)=\begin{cases}
0, &{\rm if}\;\; x>0,\vspace{2mm} \\
-\infty, &{\rm if}\;\; x<0.
\end{cases}\end{gather*}%
The existence of a unique $C^1$-smooth bounded solution to the equation \eqref{vje_eq} can be obtained by constructing the approximation problem in bounded interval and applying the Leray-Schauder fixed point theorem (see \cite{Ev16} Theorem 4 on page 539), we leave it to the interested readers.

We come to prove
\begin{align}\label{vje}
v_{j-1} \leq v_j^\ep\leq \cc/r+\ep.
\end{align}
By the induction hypothesis, we have
\begin{align*}
0\leq c_j\leq \cc < \mu,\quad \frac{\cc-\la\ell\ga}{r}\leq v_{j-1}\leq \frac{\cc}{r},\quad 0\leq v_{j-1}'\leq \ell,\quad 0\leq h\leq \la\ell\ga,
\end{align*}
so
$$
\LL_{c_j} v_{j-1}-\TT v_{j-1}+h-c_j+\beta_\ep(v_{j-1}-v_{j-1})\leq (r+\la)\frac{\cc}{r}-\la \frac{\cc-\la\ell\ga}{r}+\la\ell\ga+\beta_\ep(0)=0.
$$
By \lemref{lem:CP1}, we obtain $v_j^\ep\geq v_{j-1}$.
Let $\Phi(x)=\cc/r+\ep$. Since $v_{j-1}\leq \cc/r$, we have
$$
\LL_{c_j} \Phi-\TT \Phi+h-c_j+\beta_\ep(\Phi-v_{j-1})\geq \cc+\ep r+h-c_j +\beta_\ep(\ep)\geq0.
$$
By \lemref{lem:CP1}, we obtain $v_j^\ep\leq \Phi$, completing the proof of \eqref{vje}.

We next prove
\begin{align}\label{vje'}
0\leq (v_j^\ep)' \leq \ell
\end{align}
Differentiating \eqref{vje_eq} yields
\begin{align*}
\LL_{c_j} \big((v_j^\ep)'\big)-\II \big((v_j^\ep)'\big)+\beta_\ep'(v_j^\ep-v_{j-1})(v_j^\ep)'=\la\ell (1-F)+\beta_\ep'(v_j^\ep-v_{j-1}) v_{j-1}' \quad {\rm in}~~ \R^+.
\end{align*}
Since $(v_j^\ep)'$, $v_{j-1}'$, $F$ and $\beta_\ep'(v_j^\ep-v_{j-1})$ are continuous and bounded in $\R^+$, so $(v_j^\ep)''$ is also continuous and bounded in $\R^+$.
For $\psi(x)=0$ and $\Psi(x)=\ell$, one has
\begin{align*}
\LL_{c_j} \psi-\II \psi+\beta_\ep'(v_j^\ep-v_{j-1})\psi=0 &\leq \la\ell (1-F)+\beta_\ep'(v_j^\ep-v_{j-1}) v_{j-1}' \\
&\leq r l+\la\ell (1-F)+\beta_\ep'(v_j^\ep-v_{j-1}) \ell\\
&=\LL_{c_j} \Psi-\II \Psi+\beta_\ep'(v_j^\ep-v_{j-1})\Psi.
\end{align*}
Applying \lemref{lem:CP1}, we get \eqref{vje'}.

Since $(v_j^\ep)_{0<\ep<1}$ is a bounded sequence in $W^{1,\infty}(\R^+)$, it has a subsequence converges to some $v_j \in W^{1,\infty}(\R^+)$ uniformly in $C([0,L])$ and weakly in $W^{1,\infty}([0,L])$ for any $L>0$. It is easy to check that $v_j $ is a $ W^{1,\infty}(\R^+)$ solution to
$$
\min\{\LL_{c_j} v_j-\TT v_j+h-c_j, \; v_j-v_{j-1}\}=0, \quad x\in\R^+.
$$
Moreover, by \lemref{lem:CP1}, this solution is unique in $ W^{1,\infty}(\R^+)$.
Finally, we can derive easily \eqref{vi} and \eqref{vix} from \eqref{vje} and \eqref{vje'} respectively.
\end{proofof}

From now on, we use $v_i$, $0\leq i\leq n$, to denote the unique solution to \eqref{vi_pb} in $W^{1,\infty}(\R^+)$. And define $\D_i=\{v_i > v_{i-1}\}$ for $1\leq i\leq n$.

\begin{lemma}\label{lem:vi''}
For $i=0,1,\cdots,n,$ the function $v_i$ satisfies
\begin{align}\label{vi''}
v_i''\geq-\frac{\la\ell}{\mu-\cc}
\end{align}
in the viscosity sense, which implies that $v_i(x)+\frac{1}{2} \frac{\la\ell}{\mu-\cc} x^2$ is a convex function.
\end{lemma}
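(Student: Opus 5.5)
We prove the lower bound \eqref{vi''} by induction on $i$, and on each $\D_i$ we read it off from the equation.

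\textbf{Base case $i=0$.} Here $v_0=g$ solves \eqref{g_eq}, so $g\in C^1$ satisfies
\[
(\mu-\cc)g'=(r+\la)g-\TT g+h-\cc .
\]
The right-hand side is Lipschitz, so $g''$ exists a.e. Differentiating gives
\[
(\mu-\cc)g''=(r+\la)g'-\II g'+h' .
\]
By \eqref{defh}, $h'=-\la\ell(1-F)$. Using $g'\ge0$, $g'\le \ell$ and $F\le 1$, we get
\[
\II g'\le \la\ell F,\qquad (r+\la)g'\ge 0 .
\]
Hence $(\mu-\cc)g''\ge -\la\ell F-\la\ell(1-F)=-\la\ell$, that is, $g''\ge -\la\ell/(\mu-\cc)$. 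For the $C^{1,1}$ function $g$ this a.e. bound is equivalent to the viscosity statement.

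\textbf{Inductive step, on $\D_i$.} Assume the bound for $v_{i-1}$. On the open set $\D_i=\{v_i>v_{i-1}\}$, the VI \eqref{vi_pb} gives
\[
\LL_{c_i}v_i-\TT v_i+h-c_i=0 .
\]
The same computation as in the base case, now using $0\le v_i'\le\ell$ from \lemref{lem:vi} and $c_i\le\cc$, gives
\[
(\mu-c_i)v_i''\ge-\la\ell ,
\]
so $v_i''\ge -\la\ell/(\mu-c_i)\ge -\la\ell/(\mu-\cc)$ there. The last inequality holds because $\mu-c_i\ge\mu-\cc>0$.

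\textbf{Inductive step, off $\D_i$.} On the complement, $v_i=v_{i-1}$ and $v_i\ge v_{i-1}$ everywhere. Take a point $x_0$ with $v_i(x_0)=v_{i-1}(x_0)$ and a smooth $\varphi$ touching $v_i$ from above at $x_0$. Then $\varphi$ also touches $v_{i-1}$ from above at $x_0$. The viscosity inequality for $v_{i-1}$ (induction hypothesis) then gives $\varphi''(x_0)\ge-\la\ell/(\mu-\cc)$. So the supersolution-type viscosity inequality holds at every point.

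\textbf{Consequence.} A continuous function whose second derivative is bounded below by $-K$ in the viscosity sense has $v_i+\tfrac K2x^2$ convex, by the standard characterization of viscosity convexity. With $K=\la\ell/(\mu-\cc)$ this is the claim.

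\textbf{Main obstacle.} The delicate point is the differentiation on $\D_i$. A priori $v_i$ is only $W^{1,\infty}$, so the argument above needs care. There are two routes.

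The first route is to note that on $\D_i$ the equation expresses $v_i'$ as a Lipschitz function, since $\TT v_i$ is Lipschitz: indeed $\p_x\TT v=\II v_x$ is bounded. So $v_i$ is $C^{1,1}$ locally on $\D_i$, and the differentiation is legitimate a.e.

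The second route avoids regularity issues altogether by working with the penalized solutions $v_i^\ep$ from the proof of \lemref{lem:vi}. Differentiating their equation gives
\[
(\mu-c_i)(v_i^\ep)''=(r+\la)(v_i^\ep)'-\II(v_i^\ep)'+h'+\beta_\ep'\,(v_i^\ep)'-\beta_\ep'\,v_{i-1}' .
\]
The penalty terms $\beta_\ep'\big((v_i^\ep)'-v_{i-1}'\big)$ have no sign, which makes a uniform lower bound harder. For this reason I would first try the first, direct route: the equation on $\D_i$ combined with the touching argument on the contact set, avoiding the penalty entirely.
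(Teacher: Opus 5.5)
Your proof is correct and follows the paper's route. You argue by induction on $i$, differentiate the equation for $g$ and for $v_i$ on $\D_i$, and use $0\le v_i'\le\ell$ to bound $\II v_i'\le\la\ell F$; on the contact set you use a touching argument where $v_i-v_{i-1}$ attains its minimum $0$. Your extra observation, that $v_i$ is locally $C^{1,1}$ on $\D_i$ because the equation expresses $v_i'$ as a Lipschitz function there, fills in a regularity step the paper leaves implicit.
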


{Lemma~\ref{lem:vi''} provides a uniform lower bound on the second derivative, independent of the discretization parameter $n$. Such concavity-type estimates are crucial for establishing the regularity of the limiting value function and for analyzing the free boundary in Section~\ref{sec:FB}.\medskip
}

\begin{proofof}{\lemref{lem:vi''}.}
By the equation \eqref{g'_eq} and the estimate \eqref{g_b} we have
\begin{align*}
(\mu-\cc)g''=&~ (r+\la ) g'-\II (g')-\la\ell (1-F) \geq 0-\la\ell F-\la\ell (1-F)=-\la\ell,
\end{align*}
so \eqref{vi''} holds for $i=0$.

Now suppose \eqref{vi''} holds for some $i=j-1$ ($1\leq j<n$).
Differentiating the equation in \eqref{vi_pb} and using the estimate \eqref{vix} we get
\begin{align*}
(\mu-c_j)v_j''=&~ (r+\la ) v_j'-\II (v_j')-\la\ell (1-F)
\geq 0-\la\ell F-\la\ell (1-F)=-\la\ell \quad{\rm in}~~ \D_j,
\end{align*}
which implies $v_j''\geq-\frac{\la\ell}{\mu-\cc}$ in $ \D_j$.
On the other hand, since $v_j-v_{j-1}$ attains its minimum value 0 in $\R\setminus\D_j$, we have $v_j''\geq v_{j-1}''\geq-\frac{\la\ell}{\mu-\cc}$ in $\R\setminus\D_j$ by the induction hypothesis, so \eqref{vi''} also holds for $i=j$. This completes the proof.
\end{proofof}

Define
$$u_i:=\frac{v_i-v_{i-1}}{\Dc },\quad i=1,2,\cdots,n.$$

\begin{lemma}\label{lem:ui_b}
For $i=1,2,\cdots,n$, we have
\begin{align}\label{ui_b}
0\leq u_i\leq \frac{\ell-1}{r}.
\end{align}
\end{lemma}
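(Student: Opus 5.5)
The lower bound $u_i\geq 0$ is immediate: $v_i$ solves the variational inequality \eqref{vi_pb}, so $v_i-v_{i-1}\geq 0$ on $\R^+$. Hence the real content is the upper bound. It is equivalent to
\[
v_i\leq \Psi_i:=v_{i-1}+K\Dc,\qquad K:=\frac{\ell-1}{r},
\]
and I plan to obtain it by applying the comparison principle \lemref{lem:CP1} in its ``min'' form. There I take $\JJ=\TT$, $H\equiv 0$, $c=c_i$ and obstacle $\eta=v_{i-1}\in W^{1,\infty}(\R^+)$, which is allowed by \lemref{lem:vi}.

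For $\psi_1=v_i$ the left-hand side $\min\{\LL_{c_i}v_i-\TT v_i+h-c_i,\;v_i-v_{i-1}\}$ vanishes a.e. by \eqref{vi_pb}. It therefore suffices to show that $\Psi_i\in W^{1,\infty}(\R^+)$ satisfies
\[
\min\{\LL_{c_i}\Psi_i-\TT\Psi_i+h-c_i,\;\Psi_i-v_{i-1}\}\geq 0\quad\text{a.e. in }\R^+ .
\]
The second entry equals $K\Dc\geq 0$, because $\ell>1$. For the first entry I would compute directly, using four facts:
\begin{itemize}
\item a constant $K\Dc$ contributes $(r+\la)K\Dc-\la K\Dc=rK\Dc$, since $\TT$ maps a constant $k$ to $\la k$;
\item $\LL_{c_i}w=\LL_{c_{i-1}}w+(c_i-c_{i-1})w'=\LL_{c_{i-1}}w-\Dc\, w'$;
\item $-c_i=-c_{i-1}+\Dc$;
\item $rK=\ell-1$.
\end{itemize}
Together these give
\[
\LL_{c_i}\Psi_i-\TT\Psi_i+h-c_i=\big(\LL_{c_{i-1}}v_{i-1}-\TT v_{i-1}+h-c_{i-1}\big)+\Dc\,\big(\ell-v_{i-1}'\big).
\]

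The first bracket is nonnegative a.e. For $i\geq 2$ this follows because $v_{i-1}$ solves its own variational inequality in \eqref{vi_pb}. For $i=1$, where $v_0=g$, the bracket is in fact zero by \eqref{g_eq}. The second term is nonnegative by the gradient bound \eqref{vix}, $v_{i-1}'\leq\ell$, or by \eqref{g'} when $i=1$. Hence $\Psi_i$ is a supersolution of the obstacle problem, and \lemref{lem:CP1} yields $v_i\leq\Psi_i$, i.e.\ $u_i\leq(\ell-1)/r$.

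I expect no genuine obstacle here. The only points needing care are the following.
\begin{itemize}
\item The inequalities hold only almost everywhere, since $v_{i-1}$ is merely $W^{1,\infty}$; but this is exactly the a.e. form that \lemref{lem:CP1} requires.
\item The key estimate $v_{i-1}'\leq\ell$ from \lemref{lem:vi} is what makes the constant exactly $(\ell-1)/r$. This is the step where the gradient bound, and not only the boundedness of $v_{i-1}$, is essential.
\end{itemize}
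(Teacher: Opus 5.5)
Your proposal is correct and is essentially the paper's proof: the same supersolution $v_{i-1}+\frac{\ell-1}{r}\Dc$, the same identity reducing its residual to $\bigl(\LL_{c_{i-1}}v_{i-1}-\TT v_{i-1}+h-c_{i-1}\bigr)+\Dc(\ell-v_{i-1}')\geq 0$, and the same appeal to \lemref{lem:CP1}. One small correction: you should take $H(x,y)=h(x)-c_i$ (constant in $y$, hence admissible), not $H\equiv 0$, because the term $h-c_i$ sits inside the minimum and so must be part of the operator in \lemref{lem:CP1}.
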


{The quantity $u_i$ approximates $-v_c$ (the partial derivative with respect to the dividend rate) in the discrete setting. The uniform bound \eqref{ui_b} shows that the value function's sensitivity to changes in $c$ is bounded, a property that will be preserved in the limit and is essential for verifying the HJB conditions.\medskip
}

\begin{proofof}{\lemref{lem:ui_b}.}
We only need to prove the second inequality.
Let $\ov_i=v_{i-1}+\frac{\ell-1}{r} \Dc $, since $v_{i-1}'\leq \ell,$ we have
\begin{align*}
\LL_{c_i} \ov_i-\TT \ov_i+h-c_i
=&\LL_{c_i} v_{i-1}-\TT v_{i-1}+h-c_i+(\ell-1) \Dc\\
=&\LL_{c_{i-1}} v_{i-1}-\TT v_{i-1}+h-c_{i-1}+(\ell-v_{i-1}') \Dc\geq 0.
\end{align*}
Applying \lemref{lem:CP1} we get $v_i\leq \ov_i$, which implies \eqref{ui_b} holds.
\end{proofof}

\begin{lemma}\label{lem:uix_b}
For $i=1,2,\cdots,n$, we have
\begin{align}\label{uxi_b}
-\frac{(r+\la)(\ell-1)}{r(\mu-\cc)}\leq u_i' \leq \frac{(r+\la)(\ell-1)+r}{r(\mu-\cc)} \quad{\rm a.e.\;in}~~ \D_{i-1}.
\end{align}
\end{lemma}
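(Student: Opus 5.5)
We bound $u_i'$ by subtracting the two integro-differential relations satisfied by $v_i$ and $v_{i-1}$ on $\D_{i-1}$, dividing by $\Dc$, and using the a priori bounds of \lemref{lem:vi} and \lemref{lem:ui_b}. Two facts make this possible. On $\D_{i-1}=\{v_{i-1}>v_{i-2}\}$ the obstacle is inactive for $v_{i-1}$, so
\[
\LL_{c_{i-1}}v_{i-1}-\TT v_{i-1}+h-c_{i-1}=0\quad\text{a.e. in }\D_{i-1}.
\]
Everywhere, the VI \eqref{vi_pb} gives
\[
\LL_{c_i}v_i-\TT v_i+h-c_i\geq 0\quad\text{a.e.}
\]
These identities hold a.e. because $v_i,v_{i-1}\in W^{1,\infty}(\R^+)$.

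The key algebraic step uses $c_{i-1}=c_i+\Dc$ and $v_i=v_{i-1}+\Dc\, u_i$. These give
\begin{align*}
\LL_{c_i}v_i-\LL_{c_{i-1}}v_{i-1}
=-(\mu-c_i)\Dc\, u_i'-\Dc\, v_{i-1}'+(r+\la)\Dc\, u_i .
\end{align*}
Linearity of $\TT$ gives $\TT v_i-\TT v_{i-1}=\Dc\,\TT u_i$, and $-c_i+c_{i-1}=\Dc$. Subtracting the equality for $v_{i-1}$ from the inequality for $v_i$ and dividing by $\Dc$ then yields, a.e. in $\D_{i-1}$,
\begin{align*}
(\mu-c_i)u_i'\leq (r+\la)u_i-\TT u_i+1-v_{i-1}'.
\end{align*}
By \lemref{lem:ui_b} we have $0\le u_i\le \frac{\ell-1}{r}$, hence $\TT u_i\geq 0$; by \eqref{vix} we have $v_{i-1}'\geq 0$. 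Therefore $(\mu-c_i)u_i'\leq \frac{(r+\la)(\ell-1)}{r}+1$. Dividing by $\mu-c_i\geq \mu-\cc>0$ gives the upper bound in \eqref{uxi_b}.

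For the lower bound we split $\D_{i-1}$ into two parts.

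On $\D_{i-1}\cap\D_i$ both $v_i$ and $v_{i-1}$ satisfy the equation with equality, so the relation above becomes an equality:
\[
(\mu-c_i)u_i'=(r+\la)u_i-\TT u_i+1-v_{i-1}'.
\]
Now use $u_i\geq0$, $\TT u_i\le \la\max u_i^+\le \frac{\la(\ell-1)}{r}$ (as noted after \lemref{lem:CP1}), and $v_{i-1}'\le\ell$. These give
\[
(\mu-c_i)u_i'\geq -\frac{\la(\ell-1)}{r}+1-\ell=-\frac{(r+\la)(\ell-1)}{r},
\]
and dividing by $\mu-c_i\ge\mu-\cc$ yields the lower bound.

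On $\D_{i-1}\setminus\D_i=\{u_i=0\}$, the Lipschitz function $u_i$ attains its minimum $0$. At a.e. point of this set $u_i$ is differentiable with $u_i'=0$ (the standard fact that the derivative of a $W^{1,\infty}$ function vanishes a.e. on a level set). Since $0$ lies within both bounds, \eqref{uxi_b} holds trivially there.

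The only delicate point is justifying the a.e. use of the equations and of $u_i'=0$ on the contact set. The former holds because the VI is satisfied in the $W^{1,\infty}$, a.e. sense. The latter is the standard level-set property of Sobolev functions. Everything else is the subtraction above together with the bounds from \lemref{lem:vi} and \lemref{lem:ui_b}.
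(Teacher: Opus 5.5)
Your proof is correct and follows essentially the same route as the paper. You subtract the relations for $v_i$ and $v_{i-1}$ on $\D_{i-1}$, bound the resulting expression for $u_i'$ using $0\le u_i\le\frac{\ell-1}{r}$ and $0\le v'\le\ell$, and treat the contact set $\{u_i=0\}$ separately; the paper writes the difference in the equivalent form $\LL_{c_{i-1}}u_i-\TT u_i+1-v_i'$ and gets both bounds from the equality on $\D_i\cap\D_{i-1}$. Your justification that $u_i'=0$ a.e. on $\{u_i=0\}$ (a minimum of a Lipschitz function, or the Sobolev level-set property) is sounder than the paper's claim that the boundary of an open subset of $\R$ is countable, which is false in general even though the conclusion it is used for holds.
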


{This lemma controls the spatial derivative of the divided differences on the region where the dividend rate is actively increasing. These bounds are critical for establishing the differentiability with respect to $x$ of the limiting value function and the continuity of the free boundary later. \medskip
}

\begin{proofof}{\lemref{lem:uix_b}.}
Since $u_i=0$ in $\R\setminus \D_i$, we have $u_i'=0$ in the inner set of $\R\setminus\D_i$, note that $\D_i$ is an open set, so its boundary set is a countable set whose Lebesgue measure is 0, therefore, we have $u_i'=0$ a.e. in $\R\setminus \D_i$ satisfying \eqref{uxi_b}. We only need to prove \eqref{uxi_b} holds a.e. in $\D_i\cap \D_{i-1}.$

The difference between the equations of $v_i$ and $v_{i-1}$ in $\D_i\cap \D_{i-1}$ satisfies
\begin{align*}
\LL_{c_{i-1}} u_i-\TT u_i+1-v_i'=0 \quad{\rm a.e.\;in}~~ \D_i\cap \D_{i-1}.
\end{align*}
Applying \eqref{vix}, \eqref{ui_b} and $u_i\geq 0$, we get
\begin{align*}
(\mu-c_{i-1}) u_i'=&~(r+\la) u_i-\TT u_i+1-v_i'\medskip\\
\in &~ \Big[-(\frac{\la}{r}+1)(\ell-1), (r+\la)\frac{\ell-1}{r}+1 \Big] \quad{\rm in}~~ \D_i\cap \D_{i-1},
\end{align*}
which implies \eqref{uxi_b} holds a.e. in $\D_i\cap \D_{i-1}.$
The proof is complete.
\end{proofof}

\begin{lemma}\label{lem:uci_b}
For $i=2,3, \cdots,n$, we have
\begin{align}\label{ui_b2}
u_{i-1}\leq u_i+B \Dc,
\end{align}
where
\begin{align*}
B=\frac{2(r+\la)(\ell-1)}{r^2(\mu-\cc)}.
\end{align*}
\end{lemma}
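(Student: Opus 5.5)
The plan is to apply the comparison principle \lemref{lem:CP1} with $\D=\D_{i-1}$, $c=c_{i-1}$ and $\JJ=\TT$, comparing $\psi_1:=u_{i-1}$ against a supersolution built from $u_i$. Everything rests on two equations. First, in $\D_{i-1}$ the functions $v_{i-1}$ and $v_{i-2}$ satisfy
\begin{align*}
\LL_{c_{i-1}} v_{i-1}-\TT v_{i-1}+h-c_{i-1}&=0,\\
\LL_{c_{i-1}} v_{i-2}-\TT v_{i-2}+h-c_{i-1}&\geq 0 .
\end{align*}
Writing $\LL_{c_{i-1}}=\LL_{c_{i-2}}+\Dc\,\partial_x$ (since $c_{i-2}-c_{i-1}=\Dc$), the second relation comes from $\LL_{c_{i-2}} v_{i-2}-\TT v_{i-2}+h-c_{i-2}\geq 0$ plus a term of size $\Dc(v_{i-2}'-1)$. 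Subtracting and dividing by $\Dc$ gives in $\D_{i-1}$ an inequality of the form
\begin{align*}
\LL_{c_{i-1}} u_{i-1}-\TT u_{i-1}+1-v_{i-2}'\leq 0 ,
\end{align*}
or the analogue with the $x$-derivative term arranged suitably. In $\D_{i-1}$ we also have $u_{i-1}>0$.

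Second, on $\D_i$ the same computation (already used in \lemref{lem:uix_b}) gives
\begin{align*}
\LL_{c_{i-1}} u_i-\TT u_i+1-v_i'=0 ,
\end{align*}
while on the complement of $\D_i$ we only have the inequality $\LL_{c_{i}} v_{i}-\TT v_i+h-c_i\geq 0$. So I will set $\psi_2:=u_i+B\Dc$ and compute $\LL_{c_{i-1}}\psi_2-\TT\psi_2$. The constant contributes $(r+\la)B\Dc-\la B\Dc=rB\Dc$, using the fact that $\TT$ applied to a constant gives $\la$ times that constant. The leftover discrepancy is $v_i'-v_{i-2}'$ (plus drift corrections of order $\Dc\, u'$), and the task is to show it is dominated by $rB\Dc$.

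This is the main obstacle: bounding $v_i'-v_{i-2}'=\Dc\,(u_i'+u_{i-1}')$. I would use \lemref{lem:uix_b}, which bounds $u_i'$ a.e. in $\D_{i-1}$; we are working in $\D_{i-1}\subset$ the relevant set, and $u_{i-1}'$ is controlled similarly on $\D_{i-2}$, or is zero off $\D_{i-1}$. Only the lower bound $-\frac{(r+\la)(\ell-1)}{r(\mu-\cc)}$ is needed, in the right direction, and twice it equals $rB$. This matches the constant $B=\frac{2(r+\la)(\ell-1)}{r^2(\mu-\cc)}$, which confirms the intended mechanism: $-(u_i'+u_{i-1}')\Dc\leq rB\Dc$. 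The sign bookkeeping is delicate, so if the direct subtraction pairs the wrong derivative with the drift term, I would instead rewrite $\LL_{c_{i-1}}$ as $\LL_{c_i}\pm\Dc\,\partial_x$ and distribute the pieces so that only lower bounds on $u'$ appear.

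On $\R^+\setminus\D_{i-1}$ we have $u_{i-1}=0\leq u_i+B\Dc$, since $u_i\geq 0$ by \lemref{lem:ui_b}. That is the boundary condition in \lemref{lem:CP1}, so the lemma yields $u_{i-1}\leq u_i+B\Dc$ on $\R^+$. For regularity, $u_i,u_{i-1}\in W^{1,\infty}$ by \lemref{lem:vi}, so \lemref{lem:CP1} applies. Points of $\D_{i-1}\setminus\D_i$, where $u_i=0$ and the equation for $u_i$ becomes an inequality, will be handled by the observation that there $v_i=v_{i-1}$ attains a minimum of $v_i-v_{i-1}$. This gives $u_i'=0$ a.e., and the needed supersolution inequality for $\psi_2$ reduces to $rB\Dc\geq$ a nonpositive quantity, using $\LL_{c_{i-1}}u_{i-1}-\TT u_{i-1}\leq v_{i-2}'-1$ together with $v'\leq\ell$.
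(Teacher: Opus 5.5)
Your route works and differs from the paper's only in the drift used in the comparison operator, but that choice changes what you have to prove. The paper compares $\varphi=(u_i-u_{i-1})/\Dc+B$ with $0$ using $\LL_{c_{i-2}}$. It writes the $u_{i-1}$ relation on $\D_{i-1}$ as $\LL_{c_{i-2}}u_{i-1}-\TT u_{i-1}+1-v_{i-1}'\le 0$. The shift $\LL_{c_{i-2}}u_i=\LL_{c_{i-1}}u_i+\Dc\,u_i'$ then combines with $v_i'-v_{i-1}'=\Dc\,u_i'$ to give $\LL_{c_{i-2}}\varphi-\TT\varphi\ge 2u_i'+rB\ge 0$. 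So only the lower bound on $u_i'$ is used, which is why \lemref{lem:uix_b} is stated on $\D_{i-1}$. You keep $\LL_{c_{i-1}}$ on both sides, so the discrepancy is $v_i'-v_{i-2}'=\Dc(u_i'+u_{i-1}')$. You therefore need lower bounds on both $u_i'$ and $u_{i-1}'$ a.e. on $\D_{i-1}$. With $K:=\frac{(r+\la)(\ell-1)}{r(\mu-\cc)}$, both routes close because $rB=2K$.

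Two steps are not right as written.

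First, your justification of $u_{i-1}'\ge -K$ does not cover $\D_{i-1}$. \lemref{lem:uix_b} with index $i-1$ gives the bound only on $\D_{i-2}$, and "$u_{i-1}'=0$ off $\D_{i-1}$" is irrelevant. Since $\D_{i-1}\not\subseteq\D_{i-2}$ in general, the set $\D_{i-1}\setminus\D_{i-2}$ is left uncovered. The fix is already in your hands: your subsolution inequality gives, a.e. in $\D_{i-1}$,
\begin{align*}
(\mu-c_{i-1})u_{i-1}'\geq (r+\la)u_{i-1}-\TT u_{i-1}+1-v_{i-2}'\geq -\frac{\la(\ell-1)}{r}+1-\ell=-\frac{(r+\la)(\ell-1)}{r},
\end{align*}
using $u_{i-1}\ge 0$, \eqref{ui_b} and \eqref{vix}.

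Second, the case split over $\D_i$ is unnecessary, and the claim in your last paragraph is false. On all of $\D_{i-1}$ you have $\LL_{c_{i-1}}u_i-\TT u_i+1-v_i'\ge 0$ a.e., from equality for $v_{i-1}$ and the inequality for $v_i$. This is exactly the supersolution direction you need, so nothing special happens on $\D_{i-1}\setminus\D_i$. There the requirement is $v_{i-2}'-1+\TT u_i\le rB\Dc$. The bound $v'\le\ell$ does not make the left side nonpositive; it only bounds it by an $O(1)$ quantity. The correct argument uses $\TT u_i\le 1-v_i'=1-v_{i-1}'$ there. This gives $v_{i-2}'-v_{i-1}'=-\Dc\,u_{i-1}'\le K\Dc$, the same estimate as in the main case.

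A minor slip: in fact $\LL_{c_{i-1}}=\LL_{c_{i-2}}-\Dc\,\p_x$, and the intermediate relation for $v_{i-2}$ is $\ge\Dc(1-v_{i-2}')$, not $\ge 0$. Your resulting inequality for $u_{i-1}$ is nevertheless correct.
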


{ This lemma establishes a one-side bound for the second-order difference of the solution with respect to $c$, which is crucial for proving the continuity of the free boundary. \medskip
}

\begin{proofof}{\lemref{lem:uci_b}.}
Since $u_{i-1}=0$ in $\R\setminus \D_{i-1}$ and $u_i\geq 0$, we only need to prove \eqref{ui_b2} holds in $\D_{i-1}$.
Notice
\begin{align*}
\LL_{c_{i-1}} v_{i-1}-\TT v_{i-1}+h-c_{i-1} &=0 \quad{\rm in}~~ \D_{i-1},\medskip\\
\LL_{c_i} v_i-\TT v_i+h-c_i &\geq 0 \quad{\rm a.e.\;in}~~ \D_{i-1},\medskip\\
\LL_{c_{i-2}} v_{i-2}-\TT v_{i-2}+h-c_{i-2} &\geq 0 \quad{\rm a.e.\;in}~~ \D_{i-1},
\end{align*}
so we have
\begin{align*}
\LL_{c_{i-1}} u_i-\TT u_i+1-v_i' &\geq 0 \quad{\rm a.e.\;in}~~ \D_{i-1},\medskip\\
\LL_{c_{i-2}} u_{i-1}-\TT u_{i-1}+1-v_{i-1}' &\leq 0 \quad{\rm a.e.\;in}~~ \D_{i-1},
\end{align*}
Let $\varphi=(u_i-u_{i-1})/\Dc+B$, then
\begin{align*}
\LL_{c_{i-2}} \varphi-\TT \varphi \geq 2 u_i'+r B\geq 0\quad{\rm a.e.\;in}~~ \D_{i-1},
\end{align*}
where the last inequality is due to the first inequality of \eqref{uxi_b}.
Moreover, noting that $\varphi \geq 0$ in $\R^+\setminus\D_{i-1}$, so by \lemref{lem:CP1} we have $\varphi \geq 0$ in $\D_{i-1}$. Consequently, \eqref{ui_b2} is established.
\end{proofof}

For each positive integer $N$, let $v^N_i$, $i=1,\cdots,2^N$ be the solution to \eqref{vi_pb} with $n=2^N$ and $\Dc=2^{-N}(\cc-\uc)$. Let $c_i^N=\cc-i \Dc$.
Since $(v^{N}_i)'\leq \ell$, $v^{N}_i$ satisfies
\begin{align}\label{viN_pb}
\begin{cases}
\min\{\LL_{c_i^N} v_i^N-\TT v_i^N+h-c_i^N, \;\ell-(v_i^N)', \; v_i^N-v_{i-1}^N\}=0, & x\in\R^+,\quad i=1,2,\cdots,2^N,\medskip\\
v_0^N (x)=g (x), & x\in\R^+,
\end{cases}
\end{align}
which is the HJB equation connected to the value function w.r.t. finite ratcheting strategy:
\begin{align*}
v_i^N(x)=
\sup\limits_{S^{x,i,N}\in \Pi_{x,i}^N} J(S^{x,i,N}),
\end{align*}
where
\begin{align*}
\Pi_{x,i}^N=\Big\{ \{(C_t, D_t)\}_{t\geq 0} \in \Pi_{x,c_i^N} \;\Big|\; C_t\in \{c_i^N,\;c_{i-1}^N,\cdots, c_0^N\}, ~~ t\geq 0\Big\},
\end{align*}
and for $S^{x,i,N}=\{(C_t, D_t)\}_{t\geq 0}\in \Pi_{x,c_i^N}$, we define
\begin{align*}
J(S^{x,i,N})=\E \Bigg[\int_0^\infty e^{-r t} C_t \dt-\ell \int_0^\infty e^{-r t} \d D_t\Bigg].
\end{align*}

\begin{lemma}\label{lem:vN1}
For any $x\in \R^+$ and $i=1,2,\cdots,n$, we have
\begin{align}\label{vN1}
v^{N}_{i}(x) \leq v^{N+1}_{2i}(x).
\end{align}
\end{lemma}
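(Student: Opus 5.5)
Note first that $c^{N+1}_{2i}=\cc-2i\cdot 2^{-N-1}(\cc-\uc)=c^N_i$, so both sides of \eqref{vN1} concern the same dividend rate. The grid at level $N+1$ refines the grid at level $N$, and the inequality expresses monotonicity under refinement: more admissible rates give a larger value. There are two natural proofs. The probabilistic one would use the control interpretation of \eqref{viN_pb}: since $\Pi^N_{x,i}\subseteq\Pi^{N+1}_{x,2i}$, the claim is immediate once $v^N_i$ is known to be the finite-ratcheting value function. That identification is only asserted in the text and not proved, so I will give a purely analytic argument by induction on $i$, using \lemref{lem:CP1} as the only tool.

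The induction hypothesis is that $v^N_{i-1}\le v^{N+1}_{2i-2}$ on $\R^+$. The base case $i=1$ holds trivially, since $v^N_0=g=v^{N+1}_0$.

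For the inductive step, set $w:=v^{N+1}_{2i}$ and use two facts.

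\emph{First}, $w\ge v^{N+1}_{2i-1}\ge v^{N+1}_{2i-2}\ge v^N_{i-1}$. The first two inequalities come from the obstacle part of \eqref{vi_pb}, and the last is the induction hypothesis. Hence $w-v^N_{i-1}\ge 0$.

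\emph{Second}, $\LL_{c^N_i}w-\TT w+h-c^N_i\ge 0$ a.e. in $\R^+$. This is the first component of the min in \eqref{vi_pb} for index $2i$ at level $N+1$, read at the rate $c^{N+1}_{2i}=c^N_i$; it holds a.e. because $w\in W^{1,\infty}(\R^+)$ by \lemref{lem:vi}.

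Combining the two facts,
\[
\min\{\LL_{c^N_i}w-\TT w+h-c^N_i,\; w-v^N_{i-1}\}\ge 0=\min\{\LL_{c^N_i}v^N_i-\TT v^N_i+h-c^N_i,\; v^N_i-v^N_{i-1}\}\quad\text{a.e. in }\R^+.
\]
Now apply the second form of \lemref{lem:CP1} with $c=c^N_i$, $\JJ=\TT$, $H(x,y)=h(x)-c^N_i$ (which is constant, hence non-decreasing, in $y$), and obstacle $\eta=v^N_{i-1}\in W^{1,\infty}(\R^+)$. The $h$ terms appear identically on both sides, so they cause no trouble. The lemma gives $v^N_i\le w=v^{N+1}_{2i}$, which closes the induction.

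The main point needing care is the regularity required by \lemref{lem:CP1}. Both functions lie in $W^{1,\infty}(\R^+)$ by \lemref{lem:vi}, so the comparison "a.e. in $\R^+$" is legitimate. The induction only needs the intermediate index $2i-1$ at level $N+1$, and the chain $v^{N+1}_{2i}\ge v^{N+1}_{2i-1}\ge v^{N+1}_{2i-2}$ follows directly from the obstacle constraints. So apart from this regularity check, no step should be a genuine obstacle.
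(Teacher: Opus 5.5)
Your proof is correct, but it follows a different route from the paper's.

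The paper argues in one line. It notes that $\Pi^N_{x,i}\subseteq\Pi^{N+1}_{x,2i}$, since every rate $c^N_j$ is also a level-$(N+1)$ rate, and then takes suprema. This presupposes that $v^N_i$ is the value function of the finite-ratcheting control problem. The paper asserts that identification after \eqref{viN_pb} but never proves it; doing so would require a verification argument for the regime-switching system, analogous to the proof of \thmref{theorem:boundaryvalue}.

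You stay inside the analytic framework instead. By induction on $i$ you show that $v^{N+1}_{2i}$ is a supersolution of the level-$N$ obstacle problem at step $i$:
\begin{itemize}
\item the operator inequality holds because $c^{N+1}_{2i}=c^N_i$;
\item the obstacle inequality follows from the chain $v^{N+1}_{2i}\ge v^{N+1}_{2i-1}\ge v^{N+1}_{2i-2}\ge v^N_{i-1}$.
\end{itemize}
You then conclude with the second form of \lemref{lem:CP1}. All of its hypotheses hold: the $W^{1,\infty}$ regularity of both functions and of the obstacle $\eta=v^N_{i-1}$ (from \lemref{lem:vi}), $\JJ=\TT$, and an $H$ that is independent of $y$.

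The advantage of your version is that it relies only on results actually proved in the paper. The paper's version is shorter. More importantly, the same control interpretation is what drives \lemref{lem:vN2}: averaging two level-$N$ strategies yields a level-$(N+1)$ strategy, and no comparison-principle proof of that statement is evident. So your argument removes the unproved identification from \lemref{lem:vN1}, but not from the construction as a whole.
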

\begin{proof}
Note that $\Pi_{x,i}^N \subseteq \Pi_{x,2i}^{N+1}$, we have
$$
\sup\limits_{S^{x,i,N}\in \Pi_{x,i}^N} J(S^{x,i,N})\leq \sup\limits_{S^{x,2i,N+1}\in \Pi_{x,2i}^{N+1}} J(S^{x,2i,N+1}), 
$$
which is \eqref{vN1}.
\end{proof}

\begin{lemma}\label{lem:vN2}
For any $x_1,\;x_2\in \R^+$ and every $i=1,2,\cdots,n$, we have
\begin{align}\label{vN2}
\frac{v^{N}_i(x_1)+v^{N}_i(x_2)}{2} \leq v^{N+1}_{2i}\Big(\frac{x_1+x_2}{2}\Big).
\end{align}
\end{lemma}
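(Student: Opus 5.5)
Following the proof of \lemref{lem:vN1}, I will use the stochastic control representation $v_i^N(x)=\sup_{\Pi^N_{x,i}}J$ and argue by averaging strategies. Fix $x_1,x_2\in\R^+$, $\delta>0$, and pick $\delta$-optimal strategies $S^1=\{(C^1_t,D^1_t)\}\in\Pi^N_{x_1,i}$ and $S^2=\{(C^2_t,D^2_t)\}\in\Pi^N_{x_2,i}$, both driven by the same Poisson process and claims $\{Z_k\}$. Set
\[
\bar C_t=\tfrac12(C^1_t+C^2_t),\qquad \bar D_t=\tfrac12(D^1_t+D^2_t).
\]
Then $\bar C$ is adapted, c\`adl\`ag and non-decreasing. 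Since $C^1_0,C^2_0\geq c^N_i$ and both are at most $\cc$, we get $c^N_i\leq \bar C_t\leq\cc$. Moreover, $C^1_t,C^2_t$ take values in the grid $\{c^N_k\}$ with mesh $\Dc$, so their average lies in the refined grid $\{c^{N+1}_k\}$ with mesh $\Dc/2$; the index $i$ at level $N$ corresponds to $2i$ at level $N+1$. The process $\bar D$ is non-decreasing and nonnegative. Because the dynamics \eqref{X_eq} are affine in $(x,C,D)$ and the claims are common, the resulting surplus is $\bar X_t=\frac12(X^1_t+X^2_t)\geq0$. Hence $\bar S=(\bar C,\bar D)\in\Pi^{N+1}_{(x_1+x_2)/2,2i}$.

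The functional $J$ is linear in $(C,D)$, so $J(\bar S)=\frac12(J(S^1)+J(S^2))\geq \frac12(v_i^N(x_1)+v_i^N(x_2))-\delta$. Combining this with $J(\bar S)\leq v^{N+1}_{2i}((x_1+x_2)/2)$ and letting $\delta\to0$ gives \eqref{vN2}.

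The main obstacle is that the representation $v_i^N=\sup_{\Pi^N_{x,i}}J$ is only asserted informally (it is used without proof in \lemref{lem:vN1}). If it has to be justified, I would run a verification argument on the system \eqref{viN_pb}, mirroring Steps 1 and 3 in the proof of \thmref{theorem:strategy}. The supersolution inequality gives $v_i^N\geq J$ for every strategy with finitely many switches. For the reverse inequality, I would use the feedback strategy that jumps from $c_k$ to $c_{k-1}$ on the contact set $\{v_k=v_{k-1}\}$, together with minimal reflection for $D$. This requires $v_i^N\in W^{1,\infty}$ and the gradient bound $(v_i^N)'\leq\ell$ from \lemref{lem:vi}; the latter is what makes the constraint $\ell-v'\geq0$ in \eqref{viN_pb} automatic.
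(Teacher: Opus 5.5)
Your proposal is correct and is essentially the paper's own proof. You take $\ep$-optimal strategies for $x_1$ and $x_2$, average them, and check that the average lies in $\Pi^{N+1}_{(x_1+x_2)/2,\,2i}$ (the refined grid plus affine dynamics with common claims); the linearity of $J$ then gives the result. Your caveat that the control representation $v_i^N=\sup_{\Pi^N_{x,i}}J$ is only asserted in the paper, and that \lemref{lem:vN1} also rests on it, is fair, and the verification argument you sketch is the natural way to supply it.
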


{
Inequalities \eqref{vN1} and \eqref{vN2} reflect fundamental properties of the discrete approximations.
Inequality \eqref{vN2} is a convexity-type property inherited from the concavity of the value function in the surplus variable (Lemma~\ref{lem:lipchitz}) and the linearity of the dynamics.
Passing to the limit yields
the concavity of $v(\cdot,c)$. This concavity is essential for the free boundary analysis in Section~\ref{sec:FB}, where it guarantees the existence of a well-defined switching boundary $\sw(\cdot)$.\medskip\\
}
\begin{proofof}{\lemref{lem:vN2}.}
For any $\ep > 0$, let $S^{x_1,i,N}_\ep=\{(C_t^{x_1}, D_t^{x_1})\}_{t\geq 0}$ with $C_{0-}^{x_1}=c_i^N$ (resp., $S^{x_2,i,N}_\ep=\{(C_t^{x_2}, D_t^{x_2})\}_{t\geq 0}$ with $C_{0-}^{x_2}=c_i^N$) an $\ep$-optimal strategy corresponding to initial surplus $x_1$ (resp., $x_2$) such that
$$
J(S^{x_1,i,N}_\ep) > v^{N}_i(x_1)-\ep, \quad J(S^{x_2,i,N}_\ep) > v^{N}_i(x_2)-\ep.
$$
Then, the strategy $\frac{1}{2} S^{x_1,i,N}_\ep+\frac{1}{2} S^{x_2,i,N}_\ep=\{(\frac{1}{2}C_t^{x_1}+\frac{1}{2} C_t^{x_2}, \frac{1}{2} D_t^{x_1}+\frac{1}{2} D_t^{x_2})\}_{t\geq 0}$ with $\frac{1}{2} C_{0-}^{x_1}+\frac{1}{2} C_{0-}^{x_2}=c_i^N=c_{2i}^{N+1} $ belongs to $\Pi_{(x_1+x_2)/2,\;2i}^{N+1}$. Therefore
\begin{align*} 
v^{N+1}_{2i}\Big(\frac{x_1+x_2}{2}\Big)\geq &~ J\Big(\frac{1}{2} S^{x_1,i,N}_\ep+\frac{1}{2} S^{x_2,i,N}_\ep\Big)\medskip\\
=&~\frac{1}{2}J( S^{x_1,i,N}_\ep )+\frac{1}{2}J(S^{x_2,i,N}_\ep)> \frac{1}{2}v^{N}_i(x_1)+\frac{1}{2} v^{N}_i(x_2)-\ep.
\end{align*}
Then letting $\ep\to 0$ we get \eqref{vN2}.
\end{proofof}


\subsection{Construction of the solution to \eqref{v_pb}}\label{sec:solvability}

Now we are ready to prove \propref{prop:u}. 
As earlier mentioned, we only need to construct a bounded strong solution to the local HJB equation \eqref{v_pb}.

Let $v^N(x,c)$ be the linear interpolation function of $v^N_i(x)$.
Then \eqref{vi}, \eqref{vix} and \eqref{ui_b} imply $v^N$ is uniform bounded and uniform Lipschitz continuous in $\Q^{+}_{\uc}$. Apply the Arzela-Ascoli theorem, there exists
a Lipschitz continuous function $v$ in $\Q^{+}_{\uc}$,
and a subsequence $v^{N_k}$ such that, for each $L>0$,
$v^{N_k}\longrightarrow v $ in $C([0,L]\times [\uc,\cc]).$
Moreover, \eqref{v}, \eqref{vx}, the first inequality of \eqref{vxx}, \eqref{vc} are derived from \eqref{vi}, \eqref{vix}, \eqref{vi''} and \eqref{ui_b}, respectively.

Now, we prove the second inequality of \eqref{vxx}. For any fixed $x_1,\;x_2\in \R^+$ and $c=\cc-i_0 2^{-N_{k_0}}(\cc-\uc)$ with $i_0$, $k_0\in \N$. Let $i_{k}=2^{N_{k}-N_{k_0}}i_0$ for $k\geq k_0$. Then $c=\cc-i_k 2^{-N_{k}}(\cc-\uc)$ for any $k\geq k_0$. Thanks to \eqref{vN2} and \eqref{vN1}, we have
\begin{align*}
\frac{v^{N_k}_{ i_k}(x_1)+v^{N_k}_{ i_k}(x_2)}{2} \leq v^{N_k+1}_{{ 2i_{k}}}\Big(\frac{x_1+x_2}{2}\Big) \leq v^{N_{k+1}}_{{ i_{k+1}}}\Big(\frac{x_1+x_2}{2}\Big).
\end{align*}
Letting $k\to \infty$ gives
\begin{align*}
\frac{v(x_1,c)+v(x_2,c)}{2} \leq v\Big(\frac{x_1+x_2}{2},c\Big).
\end{align*}
Since $v$ is continuous in $\Q^{+}_{\uc}$, we conclude $v(\cdot,c)$ is concave for each $c\in[\uc,\cc]$.

In addition, \eqref{v}-\eqref{vxx} imply $v(\cdot,c)\in W^{2,\infty}(\R^+)\subseteq C^1(\R^+)$ for each $c\in [\uc,\cc]$, so we have $v\in \A_{\uc}$.

We come to prove $v$ satisfies
the third and forth properties in \defref{def:solution}.
For each $c\in [\uc,\cc]$, by the construction of $v$, there exists $c^k=\cc-i_k 2^{-N_k}(\cc-\uc)$ such that $c^k\to c$ and
$v^{N_k}_{i_k}(\cdot)\longrightarrow v(\cdot,c)$ in $C[0,L]$ for any $L>0$.
Moreover, from \eqref{vix} we also have
$$v^{N_k}_{i_k}(\cdot)\hbox{(or its subsequence)}\longrightarrow v(\cdot,c)\quad\hbox{ weakly in }W^{1,\infty}([0,L])$$
for any $L>0$.
Letting $k\to \infty$ in the inequality
$\LL_{c^k} v^{N_k}_{i_k}-\TT v^{N_k}_{i_k}+h-c^k \geq 0$, we get \eqref{-lv>=02}.

On the other hand, if $v(x,c)>v(x,s)$ for any $c<s\leq \cc$, then for any $n\in \N$, there exists a sufficiently large $k_n\in \N$ and $s^{k_n}=\cc-j_{k_n} 2^{-N_{k_n}}(\cc-\uc)\in [c,c+1/n]$ such that $v^{N_{k_n}}_{j_{k_n}}(x)> v^{N_{k_n}}_{j_{k_n}-1}(x)$. As a consequence,
$$
\Big(\LL_{s^{k_n}} v^{N_{k_n}}_{j_{k_n}}-\TT v^{N_{k_n}}_{j_{k_n}}+h\Big)(x)-s^{k_n}=0,
$$
i.e.
$$
(v^{N_{k_n}}_{j_{k_n}})'(x)=\frac{1}{\mu-s^{k_n} }\Big((\la+r) v^{N_{k_n}}_{j_{k_n}}-\TT v^{N_{k_n}}_{j_{k_n}}+h\Big)(x)-s^{k_n}.
$$
Denote $K_n=(v^{N_{k_n}}_{j_{k_n}})'(x)$. Since $s^{k_n}\to c$ and $v^{N_{k_n}}_{j_{k_n}}(\cdot)\to v(\cdot,c)$ in $C([0,x])$ when $n\to \infty$, we have
$$
\lim\limits_{n\to \infty} K_n=\frac{1}{\mu-c }\Big((\la+r) v(x,c)-\TT v(x,c)+h(x)\Big)-c.
$$
Moreover, due to \eqref{vi''}, we have
$$
v^{N_{k_n}}_{j_{k_n}}(y)\geq v^{N_{k_n}}_{j_{k_n}}(x)+K_n ( y-x )-\frac{1}{2} \frac{\la\ell}{\mu-\cc} ( y-x )^2,\quad ~ y\in \R^+.
$$
Letting $n\to \infty$ in the above inequality yields
$$
v(y,c)\geq v(x,c)+( \lim\limits_{n\to \infty} K_n) ( y-x )-\frac{1}{2} \frac{\la\ell}{\mu-\cc} ( y-x )^2,\quad ~ y\in \R^+.
$$
This implies
$$
v_x(x,c)=\lim\limits_{n\to \infty} K_n=\frac{1}{\mu-c }\Big((\la+r) v(x,c)-\TT v(x,c)+h(x)\Big)-c,
$$
and \eqref{-lv=02} follows.
The proof of \propref{prop:u} is complete.

Since \propref{prop:u} implies \thmref{thm:u}, we complete the proof of \thmref{thm:u} as well.
From now on we use $v$ to denote the solution given in \thmref{thm:u}.


\section{On the free boundary $\sw(\cdot)$ and the equivalent maximum rate $\swb(\cdot,\cdot)$}\label{sec:FB}
In this section, we prove Proposition \ref{proposition:freeboundary} and \lemref{lemma:swb}.
\subsection{Separated regions $\SS$ and $\NS$}
To this end, we need the following technical result.
\begin{lemma}\label{lem:N}
For any $(x_0,c_0)\in \NS\bigcup(\R^+\times \{\cc\})$, if there exists $d_0<c_0$ such that $v(x_0,d_0)=v(x_0,c_0)$, then we have
\begin{align}\label{vx<=1}
(c_0-c)(1-v_x(x_0,c_0))-[\TT v(x_0,c)-\TT v(x_0,c_0)]\geq 0,\quad ~ c\in [d_0,c_0),
\end{align}
and
\begin{align}\label{SS}
v(x,c)=v(x,c_0),\quad ~ (x,c)\in [x_0,+\infty)\times[d_0,c_0].
\end{align}
\end{lemma}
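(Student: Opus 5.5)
The plan is to prove \eqref{vx<=1} by subtracting the equation at $(x_0,c_0)$ from the inequality at $(x_0,c)$. Then \eqref{SS} follows from a maximum-principle argument on $[x_0,\infty)$, with \eqref{vx<=1} used to absorb the nonlocal contribution from $[0,x_0)$.

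\emph{Step 1: proof of \eqref{vx<=1}.} Since $v$ is non-increasing in $c$ and $v(x_0,d_0)=v(x_0,c_0)$, we have $v(x_0,c)=v(x_0,c_0)$ for all $c\in[d_0,c_0]$. Fix such a $c$ and set $w(\cdot)=v(\cdot,c)-v(\cdot,c_0)\geq 0$. This function lies in $C^1(\R^+)$ and attains its minimum $0$ at $x_0$. If $x_0>0$ this gives $v_x(x_0,c)=v_x(x_0,c_0)=:p_0$; if $x_0=0$ it gives only $v_x(0,c)\geq p_0$, which is the right direction below since $v_x$ enters $\LL_c$ with the factor $-(\mu-c)<0$.

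Because $(x_0,c_0)\in\NS\cup(\R^+\times\{\cc\})$, the equality $(\LL_{c_0}v-\TT v+h-c_0)(x_0,c_0)=0$ holds, by \eqref{NS equation} or by \eqref{g_eq}. On the other hand, $(\LL_c v-\TT v+h-c)(x_0,c)\geq 0$ by the definition of a strong solution. Subtracting, the $(r+\la)v$ terms cancel and $-(\mu-c)p_0+(\mu-c_0)p_0=(c-c_0)p_0$. This leaves exactly $(c_0-c)(1-p_0)-[\TT v(x_0,c)-\TT v(x_0,c_0)]\geq 0$, which is \eqref{vx<=1}.

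\emph{Step 2: proof of \eqref{SS}, setup.} Put $W(x)=\max_{c\in[d_0,c_0]}\big(v(x,c)-v(x,c_0)\big)\geq 0$ for $x\geq x_0$, and note $W(x_0)=0$. Suppose $W>0$ somewhere on $(x_0,\infty)$. Then take a maximum point $x_1>x_0$ of $W$ on $[x_0,\infty)$. If the supremum is only approached at infinity, I would first subtract a small term $\delta x$, or use the decay of $v_x$, and then let $\delta\to0$. Choose $c_1$ attaining the maximum at $x_1$ and let $s_1=\swb(x_1,c_1)$. Since $v(x_1,c_0)<v(x_1,c_1)$, we get $s_1\in[d_0,c_0)$ and $(x_1,s_1)\in\NS$. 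Hence the equation holds at $(x_1,s_1)$, and $w_1:=v(\cdot,s_1)-v(\cdot,c_0)$ also attains its maximum $W(x_1)$ over $[x_0,\infty)$ at the interior point $x_1$. Therefore $v_x(x_1,s_1)=v_x(x_1,c_0)=:p_1$.

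\emph{Step 2: the comparison at $x_1$.} Subtract the equation at $(x_1,c_0)$ from the equation at $(x_1,s_1)$; the $c_0$-equation holds with $\ge$ at worst, which gives the right sign. This yields
\[
0\geq (r+\la)W(x_1)+(c_0-s_1)(1-p_1)-\TT w_1(x_1).
\]
The main obstacle is the nonlocal term $\TT w_1(x_1)$, since $w_1$ is not controlled on $[0,x_0)$. I will split it as
\[
\TT w_1(x_1)=\la\int_{x_0}^{x_1}w_1(z)p(x_1-z)\dz+\Big[\la\int_0^{x_0}w_1(z)p(x_1-z)\dz+\la w_1(0)(1-F(x_1))\Big].
\]
The first piece is at most $\la W(x_1)$. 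For the bracket, the assumption \eqref{C1} that $p$ is non-increasing, together with $w_1\geq0$, gives $p(x_1-z)\leq p(x_0-z)$ and $1-F(x_1)\leq 1-F(x_0)$. Hence the bracket is at most $\TT w_1(x_0)$, which by \eqref{vx<=1} with $c=s_1$ is at most $(c_0-s_1)(1-p_0)$. Finally, concavity of $v(\cdot,c_0)$ gives $p_1\leq p_0$, so $(c_0-s_1)(1-p_1)\geq(c_0-s_1)(1-p_0)$.

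\emph{Conclusion.} Combining the three estimates gives $0\geq rW(x_1)>0$, a contradiction. Hence $W\equiv0$ on $[x_0,\infty)$, which is \eqref{SS}. The delicate points are the attainment of the maximum at infinity and the case $s_1=c_1$ versus $s_1>c_1$; both are covered by working directly with $s_1$.
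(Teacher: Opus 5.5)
Your proof is correct. Step 1 matches the paper's. You are in fact slightly more careful there: the paper asserts $v_x(x_0,c)=v_x(x_0,c_0)$ ``at $x_0>0$'', whereas you check that at $x_0=0$ the one-sided inequality $v_x(0,c)\ge v_x(0,c_0)$ already has the right sign.

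For \eqref{SS}, the two proofs share the same core estimate but package it differently. The common core is to split $\TT$ at $x_0$, use that $p$, $1-F$ and $v_x(\cdot,c_0)$ are non-increasing, and then invoke \eqref{vx<=1}. The paper freezes the nonlocal contribution from $[0,x_0)$ into a source term $f(x,c)$. It then views $v$ on $[x_0,\infty)\times[d_0,c_0]$ as a solution of the localized problem \eqref{v_pb1} with operator $\KK$, and checks that $v(x,c_0)$ is also a solution. It concludes by a uniqueness result for \eqref{v_pb1} that is only asserted to be ``similar to \lemref{lem:CP00}''. You instead run the maximum-principle argument directly, which is essentially that uniqueness proof specialized to this one comparison. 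Your choice $s_1=\swb(x_1,c_1)$ plays the role of taking the largest maximizing $c$ in the proof of \lemref{lem:CP00}, which puts $(x_1,s_1)$ in $\NS$. The fact that $W(x_0)=0$ forces an interior maximizer, so that $v_x(x_1,s_1)=v_x(x_1,c_0)$. This buys a self-contained proof that never needs the auxiliary VI or its uniqueness. (Incidentally, $W(x)=v(x,d_0)-v(x,c_0)$ by monotonicity in $c$, so the choice of $c_1$ is trivial.)

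The price is that you must handle a supremum of $W$ that is not attained yourself. Your closing remark that this is ``covered by working directly with $s_1$'' is not accurate; it is covered by the $\delta$-perturbation, which does work. Maximize $W(x)-\delta(x-x_0)$ at some $x_1^\delta>x_0$ and take $s_1=\swb(x_1^\delta,d_0)$. Then $w_1-\delta(\cdot-x_0)$ is also maximized at $x_1^\delta$, so $v_x(x_1^\delta,s_1)=v_x(x_1^\delta,c_0)+\delta$. The first piece of $\TT w_1(x_1^\delta)$ is still at most $\la W(x_1^\delta)$, because $w_1(z)\le w_1(x_1^\delta)-\delta(x_1^\delta-z)$ on $[x_0,x_1^\delta]$. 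The rest of your computation then gives $0\ge rW(x_1^\delta)-(\mu-d_0)\delta$. Fix any $y$ with $W(y)>0$; then $W(x_1^\delta)\ge W(y)-\delta(y-x_0)\ge W(y)/2$ for small $\delta$, which is a contradiction. The vaguer alternative via decay of $v_x$ is unnecessary.
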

\begin{proof}
For any $c\in [d_0,c_0)$, note that $v(\cdot,c)-v(\cdot,c_0)$ attains its minimum value 0 at $x_0>0$, so we have
$v_x(x_0, c)=v_x(x_0, c_0).$
Combining
$$\LL_{c} v(x_0, c)-\TT v(x_0, c)+h(x_0)-c \geq 0$$
and
$$\LL_{c_0} v(x_0, c_0)-\TT v(x_0, c_0)+h(x_0)-c_0=0$$
we get \eqref{vx<=1}.

Now, we prove \eqref{SS}. Note that $v(x,c)$ satisfies the following problem on $u$ in the domain $[x_0,+\infty)\times[d_0,c_0]$,
\begin{align}\label{v_pb1}
\begin{cases}
\min\{\LL_c u-\KK u+f(x,c)-c, \;-u_c\}=0, & (x,c)\in [x_0,+\infty)\times[d_0,c_0],\medskip\\
u(x,c_0)=v(x,c_0), & x>x_0,
\end{cases}
\end{align}
where
\begin{align*}
f(x,c)&=-\la \int_{x-x_0}^x v(x-y,c)\d F(y)-\la v(0,c) (1-F(x))+h(x),
\end{align*}
and $\KK$ is a linear operator,
defined as
$$
\KK u(x)=\la \int_{0}^{x-x_0} u (x-y,c)\d F(y).
$$

On the other hand, let $w(x,c)=v(x,c_0)$, $(x,c)\in [x_0,+\infty)\times[d_0,c_0]$, then $w_c=0$ in $[x_0,+\infty)\times[d_0,c_0]$. For any $(x,c)\in [x_0,+\infty)\times[d_0,c_0]$ we have,
\begin{align*}
&~\big(\LL_c w-\KK w+f \big)(x,c)-c \medskip\\
=&~ \big(\LL_{c_0} v-\KK v+f \big)(x,c_0)-c_0+(c_0-c)(1-v_x(x,c_0))+(f(x,c)-f(x,c_0))\medskip\\
\geq &~ (c_0-c)(1-v_x(x,c_0))+(f(x,c)-f(x,c_0))\medskip\\
=&~ (c_0-c)(1-v_x(x,c_0))\medskip\\
&~-\la \int_{0}^{x_0} [v(z,c)-v(z,c_0)] p(x-z) \dz-\la [v(0,c)-v(0,c_0)] (1-F(x))\medskip\\
\geq &~ (c_0-c)(1-v_x(x_0,c_0)) \medskip\\
&~-\la \int_{0}^{x_0} [v(z,c)-v(z,c_0)] p(x_0-z) \dz-\la [v(0,c)-v(0,c_0)] (1-F(x_0))\medskip\\
=&~ (c_0-c)(1-v_x(x_0,c_0))-[\TT v(x_0,c)-\TT v(x_0,c_0)]\\
\geq&~ 0,
\end{align*}
where the second inequality is due to $v_x(x,c_0)$, $p(x-z)$, $1-F(x)$ are non-increasing in $x$ and $v(z,c)-v(z,c_0)\geq 0$ for all $z\in [0,x_0]$, and the last inequality is due to \eqref{vx<=1}.
So $w$ also satisfies \eqref{v_pb1}.
Similar to the proof of \lemref{lem:CP00}, we can prove the solution to the problem \eqref{v_pb1} is unique, so we have
$v(x,c)=w(x,c)=v(x,c_0)$, $(x,c)\in [x_0,+\infty)\times[d_0,c_0]$.
\end{proof}

The following result shows that $\SS$ and $\NS$ are separated by $\sw(\cdot)$.
\begin{lemma}\label{lem:N1}
If $(x,c)\in \SS$, then $(y,c)\in \SS$ for all $y\geq x$.
\end{lemma}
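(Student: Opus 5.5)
The plan is to reduce the claim to \lemref{lem:N}. Fix $(x,c)\in\SS$. By definition there is $s\in(c,\cc]$ with $v(x,c)=v(x,s)$. Since $v$ is non-increasing in $c$, this forces $v(x,\cdot)$ to be constant on $[c,s]$. Let
\[
c_0:=\max\{s'\in[c,\cc]\mid v(x,s')=v(x,c)\},
\]
which exists by continuity of $v$, and satisfies $c_0\geq s>c$.

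The next step is to check that $(x,c_0)$ lies in $\NS\cup(\R^+\times\{\cc\})$, so that \lemref{lem:N} can be applied. If $c_0=\cc$ this is immediate. If $c_0<\cc$, the maximality of $c_0$ and monotonicity in $c$ give $v(x,c_0)>v(x,s')$ for every $s'\in(c_0,\cc]$. That is exactly the statement $(x,c_0)\in\NS$.

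We then apply \lemref{lem:N} with $x_0=x$ and $d_0=c<c_0$, using $v(x_0,d_0)=v(x_0,c_0)$. Identity \eqref{SS} gives $v(y,c)=v(y,c_0)$ for all $y\geq x$. Since $c_0\in(c,\cc]$, this means $(y,c)\in\SS$ for every $y\geq x$, which is the claim.

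The only delicate point is the hypothesis $x_0>0$, which the proof of \lemref{lem:N} uses implicitly when it says a minimum at an interior point gives $v_x(x_0,c)=v_x(x_0,c_0)$. If $x=0$ this may fail, and I would handle it in one of two ways. The first is to note that the minimum of $v(\cdot,c)-v(\cdot,c_0)\geq0$ at the boundary point $0$ only gives $v_x(0,c)\geq v_x(0,c_0)$. I would then check that this one-sided inequality still yields \eqref{vx<=1} through the equation and inequality used there, because $\LL_c$ carries $-(\mu-c)v_x$ with $\mu-c>0$; this sign needs to be verified. The second is to argue that the case $x=0$ is harmless: the comparison in the proof of \eqref{SS} on $[x_0,\infty)$ with $x_0=0$ involves no boundary interaction and reduces to the same uniqueness argument. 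Apart from this, the proof is a direct consequence of \lemref{lem:N}.
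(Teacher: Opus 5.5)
Your proposal is correct and follows the paper's proof: you take the largest $c_0\in(c,\cc]$ with $v(x,c_0)=v(x,c)$, observe that $(x,c_0)\in\NS\cup(\R^+\times\{\cc\})$, and invoke \eqref{SS} from \lemref{lem:N} with $x_0=x$ and $d_0=c$. Your caveat about $x=0$, which the paper passes over, is settled by your first fix: subtracting the two relations at $x_0$ gives $(c_0-c)(1-v_x(x_0,c_0))-[\TT v(x_0,c)-\TT v(x_0,c_0)]\geq(\mu-c)[v_x(x_0,c)-v_x(x_0,c_0)]\geq 0$. Your second fix would not work on its own, because the proof of \eqref{SS} itself relies on \eqref{vx<=1}.
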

\begin{proof}
Denote $c_0=\sup\{s\in (c,\cc]\mid v(x,s)=v(x,c)\}$. Since $(x,c)\in \SS$, we know $c_0>c$. Note that $(x,c_0)\in \NS \bigcup(\R^+\times \{\cc\})$, apply \lemref{lem:N} we have $v(y,s)=v(y,c)$ for all $(y,s)\in [x,+\infty)\times [c,c_0]$, which implies $(y,c)\in \SS$ for all $y\geq x$.
\end{proof}

\subsection{Properties of the free boundary $\sw(\cdot)$}
In this section we prove that the curve $\sw(\cdot)$ is continuous on $(-\infty, \cc]$.

\begin{lemma}\label{lem:x=0}
For any $c_0\in(-\infty, \cc)$, the following three claims are equivalent.
\begin{enumerate}
\item $\sw(c_0)=0.$
\item $v_x(0,c_0)\leq 1.$
\item $\sw(c)=0$ for all $c\in (-\infty,c_0]$.
\end{enumerate}
\end{lemma}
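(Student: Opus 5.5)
The plan is to prove the cycle $3\Rightarrow1\Rightarrow2\Rightarrow3$. The implication $3\Rightarrow1$ is trivial, since $c_0\in(-\infty,c_0]$. The other two implications rely on \lemref{lem:N}, on the concavity of $v(\cdot,c)$ with $v\in\A_\infty$ from \thmref{thm:u}, and on a uniqueness argument of the type in \lemref{lem:CP00} applied on a strip.

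\emph{Step 1 ($1\Rightarrow2$).} Assume $\sw(c_0)=0$. By the definition of $\sw$ as an infimum together with \lemref{lem:N1}, we get $(x,c_0)\in\SS$ for every $x>0$. Fix such an $x>0$ and set $c_1:=\sup\{s\in(c_0,\cc]\mid v(x,s)=v(x,c_0)\}$. By continuity $v(x,c_1)=v(x,c_0)$, and $c_1>c_0$ because $(x,c_0)\in\SS$. Moreover $(x,c_1)\in\NS\cup(\R^+\times\{\cc\})$: if $c_1<\cc$ and $v(x,c_1)=v(x,s)$ for some $s>c_1$, this would contradict the definition of $c_1$.
\begin{itemize}
\item Applying \eqref{vx<=1} of \lemref{lem:N} with $(x_0,d_0)=(x,c_0)$ and taking $c=c_0$ gives $(c_1-c_0)(1-v_x(x,c_1))\geq \TT v(x,c_0)-\TT v(x,c_1)$.
\item The right-hand side is $\geq 0$, because $v$ is non-increasing in $c$ and $\TT$ is a positive operator. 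Hence $v_x(x,c_1)\leq 1$.
\item As in the first lines of the proof of \lemref{lem:N}, $v(\cdot,c_0)-v(\cdot,c_1)\geq0$ attains its minimum $0$ at the interior point $x>0$, so $v_x(x,c_0)=v_x(x,c_1)\leq1$.
\end{itemize}
Finally, let $x\downarrow0$ and use $v(\cdot,c_0)\in C^1(\R^+)$ to conclude $v_x(0,c_0)\leq1$.

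\emph{Step 2 ($2\Rightarrow3$).} Assume $v_x(0,c_0)\leq 1$ and fix any $d<c_0$. I will show $v(x,c)=v(x,c_0)$ on $\R^+\times[d,c_0]$. This gives $(x,c)\in\SS$ for every $x\geq0$ and every $c<c_0$, hence $\sw(c)=0$; the case $c=c_0$ is then covered by continuity, or by repeating the argument with a slightly smaller $c_0$. Define the candidate $w(x,c):=v(x,c_0)$ on $\R^+\times[d,c_0]$. Since $w$ does not depend on $c$, $\TT w(x,c)=\TT v(x,c_0)$, and therefore
\[
\LL_c w-\TT w+h-c=(\LL_{c_0}v-\TT v+h-c_0)(x,c_0)+(c_0-c)\big(1-v_x(x,c_0)\big)\geq0 .
\]
The first term is $\geq0$ by property 3 of \defref{def:solution}. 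The second term is $\geq0$ because concavity gives $v_x(x,c_0)\leq v_x(0,c_0)\leq1$. In addition, $w_c=0$, and property 4 holds vacuously on $[d,c_0)$ since $w$ is constant in $c$. Thus $w$ is a strong solution of the strip problem
\[
\min\{\LL_c u-\TT u+h-c,\,-u_c\}=0 \ \text{ on } \R^+\times[d,c_0],\qquad u(\cdot,c_0)=v(\cdot,c_0).
\]

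The restriction of $v$ to this strip is also a strong solution of the same strip problem. The only point to check is property 4: if $v(x,c)>v(x,s)$ for all $s\in(c,c_0]$, then monotonicity in $c$ gives $v(x,s)\leq v(x,c_0)<v(x,c)$ for all $s\in(c_0,\cc]$ as well. Hence \eqref{NS equation} applies, and it yields $\LL_c v-\TT v+h-c=0$ at $(x,c)$.

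\emph{Main obstacle.} The key step is the uniqueness of bounded strong solutions for the strip problem with terminal data prescribed at $c_0$ instead of at $\cc$. I would redo the argument of \lemref{lem:CP00} on $[d,c_0]$, proceeding backward in $c$ with \lemref{lem:CP1} applied with $\JJ=\TT$. This is the same adaptation the paper already uses for \eqref{v_pb1}, so I expect it to go through verbatim. Uniqueness then gives $v=w$ on the strip, which completes $2\Rightarrow3$.
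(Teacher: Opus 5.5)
Your proposal follows the paper's proof. It uses the same cycle $3\Rightarrow1\Rightarrow2\Rightarrow3$. For $1\Rightarrow2$ it uses the bound $v_x\le1$ on $\SS$ extracted from \eqref{vx<=1}; you make explicit the maximal rate $c_1$ and the limit $x\downarrow0$, which the paper compresses into ``$v_x(\sw(c),c)\le1$''. For $2\Rightarrow3$ it uses the $c$-independent candidate $w=v(\cdot,c_0)$ together with uniqueness on the strip. Your check that $v$ itself satisfies property~4 on the strip is a detail the paper leaves implicit.

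The one step that does not go through as written is the endpoint $c=c_0$ of claim~3, which in your cycle is exactly $2\Rightarrow1$. The identity $v(\cdot,c)=v(\cdot,c_0)$ for $c\le c_0$ puts $(x,c)$ in $\SS$ only for $c<c_0$, because the witness rate is $c_0$ itself. The paper's ``which implies the third claim'' passes over the same point. Neither of your patches works:
\begin{itemize}
\item Continuity of $\sw$ is \lemref{thm:xcccontinuous}, and its proof invokes the present lemma, so appealing to it is circular.
\item Rerunning Step~2 at a level $c_0'<c_0$ only gives information about rates $c<c_0'$.
\end{itemize}
A direct fix is available. Suppose $\sw(c_0)>0$. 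Then $(0,c_0)\in\NS$, so $v(0,c_0)>g(0)$ and \eqref{NS equation} holds at $(0,c_0)$. Subtract \eqref{g_eq} at $x=0$ and use $\TT v(0,c)=\la v(0,c)$ to get
\[
(\mu-\cc)\,g'(0)=(\mu-c_0)\,v_x(0,c_0)-(\cc-c_0)-r\big(v(0,c_0)-g(0)\big)<\mu-\cc .
\]
Hence $g'(0)<1$, and by \eqref{g''} $g'\le g'(0)<1$ on $\R^+$. Now run your Step~2 with $c_0$ replaced by $\cc$; the uniqueness needed there is just \lemref{lem:CP00}. This gives $v(\cdot,c)=g$ for all $c\le\cc$, which contradicts $(0,c_0)\in\NS$.
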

\begin{proof}
Trivially, the third claim implies the first one.
Noting that \eqref{vx<=1} implies
$v_x\leq 1$ in $\SS$
and thus for each $c\in (-\infty, \cc),$ we have
$v_x(\sw(c),c)\leq 1.$
Hence, the first claim implies the second one as well.

Now, suppose the second claims holds, then we only need to show the third claim. From \eqref{vxx} we know $v_x(\cdot,c_0)$ is non-increasing, so $v_x(x,c_0)\leq 1$ for all $x\in\R^+$. Let $u(x,c)=v(x,c_0)$. Then $u$ satisfies
$u_c=0$ and
$$
\big(\LL_c u-\TT u+h\big)(x,c)-c=\big(\LL_{c_0} v-\TT v+h\big)(x,c_0)-c_0+(c_0-c)(1-v_x(x,c_0))\geq 0,
$$
for any $(x,c)\in \R^+\times(-\infty,c_0]$,
so $u$ is a strong solution to
\begin{align}\label{u_pb1}
\begin{cases}
\min\{\LL_c u-\TT u+h-c, \;-u_c\}=0, & x\in\R^+,\; c\in (-\infty,c_0],\medskip\\
u(x,c_0)=v(x,c_0), & x\in\R^+.
\end{cases}
\end{align}
On the other hand, $v(x,c)$ also satisfies this problem, by the uniqueness of solution, we prove $v(x,c)=v(x,c_0)$ for all $(x,c)\in \R^+\times(-\infty,c_0]$, which implies the third claim.
\end{proof}

In the following we show that $\sw(\cdot)$ is continuous on $(-\infty, \cc]$.
To prove the continuity, we need the following estimate.
\begin{lemma}\label{lem:ucL}
For $ c_*<c_*+\Dc_*\leq c^*-\Dc^*<c^*\leq \cc$, we have
\begin{align}\label{ucL}
\frac{v(x,c^*-\Dc^*)-v(x,c^*)}{\Dc^*}\leq \frac{v(x,c_*)-v(x,c_*+\Dc_*)}{\Dc_*}+B(c^*-c_*),
\end{align}
where $B$ is given in \lemref{lem:uci_b}.
\end{lemma}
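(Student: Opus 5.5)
We derive \eqref{ucL} from the discrete estimate \eqref{ui_b2} of \lemref{lem:uci_b} by telescoping and then pass to the limit along the dyadic approximations $v^{N_k}\to v$. Since both sides of \eqref{ucL} are continuous in $c_*,\Dc_*,c^*,\Dc^*$ (recall that $v$ is continuous on $\Q^{+}_{\uc}$ for any $\uc$), it is enough to prove \eqref{ucL} when all four points $c_*$, $c_*+\Dc_*$, $c^*-\Dc^*$, $c^*$ are dyadic. In other words, we take them of the form $\cc-m2^{-N_{k_0}}(\cc-\uc)$ for some $k_0$, with $\uc\le c_*$. The general case then follows by density. For every $k\ge k_0$ these points are grid points of the $N_k$-th discretization, so we may write $c^*=c_{a}$, $c^*-\Dc^*=c_{a+p}$, $c_*+\Dc_*=c_{b-q}$ and $c_*=c_{b}$, with $a+p\le b-q$ and step $\Dc=2^{-N_k}(\cc-\uc)$. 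We index so that $c_i$ decreases in $i$ and $u_i=(v_i-v_{i-1})/\Dc$.

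\textbf{Discrete inequality.} The left side of \eqref{ucL} at level $k$ is the average $\frac1p\sum_{i=a+1}^{a+p}u_i$, and the first term on the right is the average $\frac1q\sum_{i=b-q+1}^{b}u_i$. Iterating \eqref{ui_b2} gives $u_i\le u_j+B(j-i)\Dc$ for all $i\le j$. For any $i$ in the first block and any $j$ in the second block we have $i<j$ and $(j-i)\Dc\le c^*-c_*$. Hence
\[
u_i\le u_j+B(c^*-c_*).
\]
Averaging this over $i$ in the first block and $j$ in the second block yields
\[
\frac{v_{a+p}-v_a}{p\Dc}\le \frac{v_b-v_{b-q}}{q\Dc}+B(c^*-c_*).
\]
This is exactly \eqref{ucL} with $v^{N_k}$ in place of $v$, since $p\Dc=\Dc^*$ and $q\Dc=\Dc_*$.

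\textbf{Passage to the limit.} Let $k\to\infty$. Because $v^{N_k}\to v$ pointwise (indeed locally uniformly), and the grid points are fixed with fixed denominators $\Dc^*,\Dc_*$, the inequality passes to $v$. The constant $B$ does not depend on $\uc$ or $n$, so this is harmless.

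\textbf{Main obstacle.} The only delicate point is bookkeeping. We must check that \lemref{lem:uci_b} applies for all indices $i\ge2$ and that the blocks do not overlap; the overlap is ruled out by $c_*+\Dc_*\le c^*-\Dc^*$. We must also justify the reduction to dyadic points. This reduction uses continuity of $v$ in $c$, which follows from \eqref{vc}, and the fact that the solution is independent of $\uc$, which lets us choose $\uc\le c_*$ freely.
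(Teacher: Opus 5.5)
Your proof is correct and follows essentially the same route as the paper: you iterate \lemref{lem:uci_b} to get $u_i\le u_j+B(j-i)\Dc$, average over the two index blocks, and pass to the limit along the dyadic approximations. The only difference is in handling non-grid endpoints: you first reduce to dyadic endpoints using continuity of $v$ in $c$, whereas the paper brackets arbitrary endpoints between grid points using monotonicity of $v^N$ in $c$, which introduces correction factors such as $n/(n-2)$ and $(m+2)/m$ that tend to $1$ as $N\to\infty$.
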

\begin{proof}
Denote $$u^N_i=\frac{v^N_i-v^N_{i-1}}{\Dc^N},$$
where $v^N_i(x),\;i=1,\cdots,2^N$ is the solution to \eqref{vi_pb} and $\Dc^N=(\cc-\underline{c})/(2^N)$ for a fixed $\underline{c}<c_*$. \lemref{lem:uci_b} implies
\begin{align*}
u^N_{i-k}\leq u^N_{i+j+l+1}+(k+j+l+1) B \Dc^N
\end{align*}
for $k,l,j\in \N$, so
\begin{align*}
\frac{1}{n}\sum_{k=0}^{n-1} u^N_{i-k}\leq \frac{1}{m}\sum_{l=0}^{m-1} u^N_{i+j+l+1}+(n+m+j+1) B \Dc^N,
\end{align*}
for $n,m\in \N$, i.e.
\begin{align*}
\frac{v^N_{i}-v^N_{i-n}}{n \Dc^N}\leq \frac{v^N_{i+j+m}-v^N_{i+j}}{m \Dc^N}+(n+m+j+1) B \Dc^N.
\end{align*}
Denote $c^N_i=\cc-i \Dc^N$ and suppose
$$c_*\in [c^N_{i+j+m+1},c^N_{i+j+m}],\; c_*+\Dc_*\in [c^N_{i+j},c^N_{i+j-1}],\; c^*-\Dc^*\in [c^N_{i},c^N_{i-1}],
\; c^*\in [c^N_{i-n+1},c^N_{i-n}],$$
for suitable $j,n,m\in \N$. 
Since $v^N(x,c)$(the linear interpolation function of $v^N_i(x)$) is non-increasing w.r.t. $c$, we have
\begin{align*}
&\frac{v^N(x,c^*-\Dc^*)-v^N(x,c^*)}{\Dc^*}
\leq\frac{v^N(x,c^N_{i})- v^N(x,c^N_{i-n})}{(n-2) \Dc^N}\medskip\\
\leq& \frac{n}{n-2}\Bigg[\frac{v^N(x,c^N_{i+j+m})-v^N(x, c^N_{i+j})}{m \Dc^N}+(n+m+j+1) B \Dc^N\Bigg]\medskip\\
\leq& \frac{n}{n-2}\Bigg[\frac{m+2}{m}\cdot\frac{v^N(x,c_*)-v^N(x, c_*+\Dc_*)}{\Dc_*}+\frac{n+m+j+1}{n+m+j-1}B (c^*-c_*)\Bigg].
\end{align*}
Sending $N\to \infty$, and observing that $n$, $m$, $i$, and $j$ also tend to infinity simultaneously, we deduce \eqref{ucL} and complete the proof.
\end{proof}

\begin{lemma}\label{thm:xcccontinuous}
The curve $\sw(\cdot)$ is continuous in $(-\infty, \cc)$.
\end{lemma}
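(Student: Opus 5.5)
The plan is to prove that $\sw$ is both upper and lower semicontinuous at every $c_0<\cc$. Throughout I will use the structure of $\SS$ and $\NS$ from \lemref{lem:N} and \lemref{lem:N1}, the fact that $v_x\le 1$ on $\SS$ (a consequence of \eqref{vx<=1}), the concavity and $C^1$ regularity of $v(\cdot,c)$, and the quantitative one-sided control in $c$ from \lemref{lem:ucL}.

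\emph{Step 1 (upper semicontinuity, $\limsup_{c\to c_0}\sw(c)\le \sw(c_0)$).} Fix $x>\sw(c_0)$, so that $(x,c_0)\in\SS$ by \lemref{lem:N1}. Let $c_1=\sup\{s: v(x,s)=v(x,c_0)\}>c_0$. \lemref{lem:N} gives $v(y,s)=v(y,c_1)$ on $[x,\infty)\times[c_0,c_1]$, so $\sw(c)\le x$ for all $c\in[c_0,c_1)$; this handles the right limit. For the left limit, $c\uparrow c_0$, I will apply \lemref{lem:ucL} with $c^*=c_0+\delta$, $c^*-\Dc^*=c_0$ and $c_*=c$, and let the increments tend to zero. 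This bounds the discrete derivative $-v_c$ near $c_0$ in terms of its values near $c$, up to $B(c_0-c)$. Combined with $v_x(x,c_0)\le 1$ and the strict drop $v(x,c)>v(x,c_0)$ that would hold if $(x,c)\in\NS$, I will try to reach a contradiction with the equation \eqref{NS equation} at $(x,c)$ once $c$ is close to $c_0$. Concretely, subtracting the equation at $(x,c)$ from the inequality at $(x,c_0)$ should force $(c_0-c)(1-v_x)$ and the nonlocal difference $\TT v(x,c)-\TT v(x,c_0)$ to have incompatible signs. The conclusion would be that $(x',c)\in\SS$ for $x'$ slightly larger than $x$.

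\emph{Step 2 (lower semicontinuity, $\liminf_{c\to c_0}\sw(c)\ge \sw(c_0)$).} If $\sw(c_0)=0$ there is nothing to prove. Otherwise take $x<\sw(c_0)$, so $(x,c_0)\in\NS$ and hence $v(x,c_0)>v(x,s)$ for every $s>c_0$. I would argue by contradiction: suppose $c_n\to c_0$ with $(x,c_n)\in\SS$. Then \lemref{lem:N} makes $v(\cdot,\cdot)$ constant in $c$ on some rectangle $[x,\infty)\times[c_n,c_n']$. I will use \lemref{lem:ucL} (with $c_*=c_n$, whose difference quotient vanishes, and $c^*$ slightly above $c_0$) to show that the right difference quotient of $v(x,\cdot)$ at $c_0$ is at most $B|c_0-c_n|+o(1)$. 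In the limit this gives $v_c(x,c_0)=0$ in the appropriate one-sided sense. Then, using the equation on $\NS$, the continuity of $v$ and $v_x$, and \eqref{vx<=1} passed to the limit, I would obtain $v_x(x,c_0)\le1$. Finally, an argument like the one in \lemref{lem:N} (comparison on the domain $[x,\infty)\times[c_0,c_0+\delta]$ via uniqueness for \eqref{v_pb1}) should show that $(x,c_0)\in\SS$, which is the desired contradiction.

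\emph{Main obstacle.} The delicate part is Step 2, and the corresponding left-limit part of Step 1. Non-switching is an open-ended, strict-inequality condition in $c$, so ruling out a jump of $\sw$ requires the second-order control in \lemref{lem:ucL} to be sharp enough that a vanishing quotient at nearby $c$ propagates to a vanishing derivative at $c_0$. I expect the careful bookkeeping of the increments $\Dc_*,\Dc^*\to0$ against $B(c^*-c_*)$, together with checking the monotonicity in $x$ needed to reapply \lemref{lem:N}, to be where most of the work lies.
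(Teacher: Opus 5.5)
The right-limit half of your Step 1 is correct and matches the last paragraph of the paper's proof: \lemref{lem:N} at $(x,\swb(x,c_0))$ gives $\sw(c)\le x$ on $[c_0,\swb(x,c_0))$. Splitting continuity into four one-sided semicontinuity statements is also fine. The other three one-sided statements are not established, however. The tool you rely on, \lemref{lem:ucL}, only transfers smallness of $c$-difference quotients \emph{upward}, from lower levels of $c$ to higher ones.

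In the left half of Step 1, the quotient at $c_0$ is already $0$ because $(x,c_0)\in\SS$, so the inequality you invoke is vacuous. Subtracting the equation at a single point $(x,c)\in\NS$ from the inequality at $(x,c_0)$ gives no sign contradiction. Besides $(c_0-c)(1-v_x(x,c_0))$ and $\TT v(x,c)-\TT v(x,c_0)$, you also get $(r+\la)[v(x,c)-v(x,c_0)]>0$ and $-(\mu-c)[v_x(x,c)-v_x(x,c_0)]$, whose sign is unknown. The paper works on a whole interval instead. If $\limsup_{c\to c_0-}\sw(c)>x^*>x_*>\sw(c_0)$, passing to the limit gives $\LL_{c_0}v-\TT v+h-c_0=0$ on $[x_*,x^*]$, while $v(\cdot,c_0)=v(\cdot,c_1)$ on $[x_*,\infty)$ with $c_1=\swb(x_*,c_0)$. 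Then \eqref{vx<=1} at $x_*$, combined with the \emph{strict} decrease of $\TT v(x,c_0)-\TT v(x,c_1)$ in $x\ge x_*$, makes the operator strictly positive on $(x_*,x^*]$, a contradiction. That strict decrease uses $v(0,c_0)>v(0,c_1)$ (from \lemref{lem:x=0}), $p$ non-increasing, and $F'=p>0$.

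Step 2 has the more serious gap. For $c_n\downarrow c_0$, \lemref{lem:ucL} points the wrong way and yields nothing. For $c_n\uparrow c_0$ it yields at most $\p_c^+v(x,c_0)=0$, which is compatible with $(x,c_0)\in\NS$. The closing ``argument like \lemref{lem:N}'' is circular: what makes that comparison work is \eqref{vx<=1}, which is derived from already knowing $v(x_0,d_0)=v(x_0,c_0)$. To get $v(x,c_0)=v(x,c_0+\delta)$ you would need $\delta(1-v_x(x,c_0+\delta))\ge\TT v(x,c_0)-\TT v(x,c_0+\delta)$. The right side involves the drops at all non-switching points $y<x$, where you have no information. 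The left side is $o(\delta)$ when $v_x(x,c_0)=1$.

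What is missing is the non-degeneracy argument at the heart of the paper's proof. Suppose $\sw$ oscillates to the right of $c_0$: there are non-switching levels $s_n\downarrow c_0$ with $\sw(s_n)>x^*$, interleaved with switching levels where $\sw<x_*$. The paper forms $u^n=(v(\cdot,s_n)-v(\cdot,s_{n-1}))/(s_{n-1}-s_n)$, which solves $\LL_{s_n}u^n-\TT u^n=v_x(\cdot,s_{n-1})-1$ on $[0,x^*]$. Then:
\begin{itemize}
\item comparison on the differentiated equation gives $(u^n)'\le 0$;
\item \lemref{lem:ucL}, fed by the switching levels below $s_n$, gives $u^n\to0$ on $[x_*,x^*]$;
\item in the limit, $v_{xx}\le 0$ and $p>0$ force $u\equiv 0$, hence $v_x(\cdot,c_0)\equiv 1$ on $(0,x^*)$;
\item inserting this into the equation gives $\la(\ell-1)p\equiv0$, contradicting \eqref{C1} and $\ell>1$.
\end{itemize}
The same device, with the switching levels below $c_0$, gives $\liminf_{c\to c_0-}\sw(c)\ge\limsup_{c\to c_0+}\sw(c)$.

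Your plan never uses $p>0$ or $\ell>1$, yet some such non-degeneracy is exactly what rules out $v_x(\cdot,c_0)\equiv 1$ on an interval. Without excluding oscillation, continuity of $v$ only gives $\sw(c_0)\le\limsup_{c\to c_0+}\sw(c)$, not the $\liminf$ bound you need.
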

\begin{proof}
We fix an arbitrary $c\in (-\infty, \cc)$.
We first prove
\begin{align}\label{x_lim}
\liminf\limits_{s\to c+}\sw(s)=\limsup\limits_{s\to c+}\sw(s).
\end{align}

Suppose on the contrary, there exist $x_*$, $x^*$ such that
$$\liminf\limits_{s\to c+}\sw(s)<x_*<x^*<\limsup\limits_{s\to c+}\sw(s).$$
Note there exist a sequence $(c_n, \Dc_n) \to (c+, 0+)$ such that
\begin{align}\label{cn}
v(x,c_n+\Dc_n)=v(x,c_n),\quad x\geq x_*,
\end{align}
and a sequence $s_1>s_2>\cdots>s_n\to c$ such that
\begin{align}\label{lv=0}
\LL_{s_n} v(x,s_n)-\TT v(x,s_n)+h(x)-s_n=0,\quad x< x^*.
\end{align}
Let $$u^n(x)=\frac{v(x,s_n)-v(x,s_{n-1})}{s_{n-1}-s_n}\geq 0.$$
Due to \eqref{vc}, $u^n$ is bounded.
Note $u^n(x)$ satisfies
\begin{align}\label{uDsn}
\LL_{s_n} u^n(x)-\TT u^n(x)=v_x (x,s_{n-1})-1,\quad x\in[0,x^*],
\end{align}
so we further know $u^n$ is bounded in $W^{1,\infty}([0,x^*])$. Therefore, it has a subsequence converges to some $u\in W^{1,\infty}([0,x^*])$ weekly in $W^{1,\infty}([0,x^*])$ and uniformly in $C([0,x^*])$.

Differentiating both sides of \eqref{uDsn} w.r.t. $x$ gives
\begin{align*}
\LL_{s_n} (u^n)'(x)-\II (u^n)'(x)=v_{xx} (x,s_{n-1})\leq 0,\quad x\in(0,x^*),
\end{align*}
and note that $ (u^n)'(x)=0$ for $x\geq x^*$, by \lemref{lem:CP1} we have $ (u^n)'\leq 0$ in $\R^+$. It hence follows $u'\leq 0$ in $(0,x^*)$.

On the other hand, applying \lemref{lem:ucL} and \eqref{cn} yields
$0\leq u^n(x)\leq B(s_n-c)\longrightarrow 0$ for $x>x_*,$
so $u(x)=0$ for all $x\in [x_*,x^*]$.
Moreover, \eqref{vxx} implies $v (\cdot,s_{n-1})$ is bounded in $W^{2,\infty}([0,x^*])$. Since $v (\cdot,s_{n-1})$ converges to $v (\cdot,c)$ in $C([0,x^*])$, it has a subsequence converges to $v(\cdot,c)$ weekly in $W^{2,\infty}([0,x^*])$ and uniformly in $C^1([0,x^*])$.
Then letting $n\to \infty$ (along a suitable subsequence) in \eqref{uDsn}, we get
\begin{align}\label{express1}
v_x (x,c)=-\la \int_0^{x} u(x-z) p(z) \dz-\la u(0) (1-F(x))+1,\quad x\in(0,x^*).
\end{align}
Differentiating both sides w.r.t. $x$ gives
\begin{align*}
v_{xx}(x,c)=-\la \int_0^{x} u'(x-z) p(z) \dz,\quad x\in(x_*,x^*).
\end{align*}
Since $u'\leq 0$, $p>0$ and $v_{xx}\leq 0$, we conclude $u'=0$ in $(0,x^*)$.
Combining $u=0$ in $(x_*,x^*)$, we get $u=0$ in $(0,x^*)$.
It implies $v_x(\cdot,c)=1$ by \eqref{express1}, so $v(\cdot,c)=x+a_0$ in $(0,x^*)$ for some constant $a_0$.
Sending $n\to\infty$ in \eqref{lv=0} and using the above two expressions, we obtain
$$-(\mu-c)+(r+\la)(x+a_0)-\la \int_0^x (x-y)p(y)\dy-\la a_0+h(x)-c=0,\quad x\in(0,x^*).$$
Differentiating both sides w.r.t. $x$ twice gives
$ \la(\ell-1)p(x)=0$ for $x\in(0,x^*),$
which contracts \eqref{C1}. So the claim \eqref{x_lim} follows.

In a similar way, we can prove $\liminf\limits_{s\to c-}\sw(s)=\limsup\limits_{s\to c-}\sw(s)$ and $\liminf\limits_{s\to c-}\sw(s)\geq \limsup\limits_{s\to c+}\sw(s).$
Thus
$$\lim\limits_{s\to c-}\sw(s)\geq \lim\limits_{s\to c+}\sw(s).$$
To prove the reverse inequality, we suppose, on the contrary, there exist $x_*, x^*$ such that $$\lim\limits_{s\to c+}\sw(s)<x_*<x^*<\lim\limits_{s\to c-}\sw(s).$$
Since $\sw(\cdot)\geq 0$, we have $x_*, x^*>0$. Notice
\begin{align}\label{between2x}
\LL_{c} v(x,c)-\TT v(x,c)+h(x)-c=0,\quad x\in [x_*, x^*].
\end{align}
Since $\lim\limits_{s\to c+}\sw(s)<x_*$, there exists $\ep\in (0,\cc-c)$ such that $\sw(s)<x_*$ for all $s\in (c,c+\ep)$ and thus $[x_*,\infty)\times(c,c+\ep)\subset \SS$, by the continuity of $v$ we have $(x_*,c)\in \SS$. 
Let $$c_0=\sup\{c'\in[c,\cc]\mid v(x_*,c)=v(x_*,c')\}.$$
Then we have either $c<c_0<\cc$, $(x_*,c_0)\in \NS$ or $c_0=\cc$.
Due to \lemref{lem:N} we have
\begin{align}\label{ineq1}
(c_0-c)(1-v_x(x_*,c_0))-[\TT v(x_*,c)-\TT v(x_*,c_0)]\geq 0,~~~ v(x,c)=v(x,c_0),~x\geq x_*.
\end{align}
Thus, for $x\geq x_*$,
\begin{align*}
&\TT v(x,c)-\TT v(x,c_0)\medskip\\
=&~\la \int_0^x [v(z,c)-v(z,c_0)]p(x-z) \dz+\la [v(0,c)-v(0,c_0)] (1-F(x))\medskip\\
=&~\la \int_0^{x_*} [v(z,c)-v(z,c_0)]p(x-z) \dz+\la [v(0,c)-v(0,c_0)] (1-F(x)).
\end{align*}
If $v(0,c)-v(0,c_0)=0$, then $\sw(c)=0$. By \lemref{lem:x=0}, it follows $\sw(s)=0$ for all $s\in (-\infty,c]$, which contradicts $\lim\limits_{s\to c-}\sw(s)>x^*>0$. So we have $v(0,c)-v(0,c_0)>0$. Combining $F'>0$ and $p'\leq0$, we have $\TT v(x,c)-\TT v(x,c_0)$ is strictly decreasing in $x\geq x_*$.
Hence, by \eqref{ineq1}, we have
\begin{align*}
&~\big(\LL_{c} v(x,c)-\TT v(x,c)+h(x)-c\big)\\
&~-\big(\LL_{c_0} v(x,c_0)-\TT v(x,c_0)+h(x)-c_0\big)\\
=&~ (c-c_0) v_x(x,c_0)+\TT v(x,c_0)-\TT v(x,c)+c_0-c>0, \quad x> x_*,
\end{align*}
contradicting to \eqref{v_pb} and \eqref{between2x}.
We now conclude
$\lim\limits_{s\to c-}\sw(s)=\lim\limits_{s\to c+}\sw(s).$

It is only left to prove
$\lim\limits_{s\to c+}\sw(s)=\sw(c).$
Firstly, for any $x>\sw(c),$ by the definition of $\sw(c)$ and \lemref{lem:N}, there exists $\os>c$ such that $$v(y,s)=v(y,c),~~~ (y,s)\in [x,+\infty)\times[c,\os].$$
This implies $\sw(s)\leq x$ for $s\in [c,\os)$. Consequently,
it follows $\lim\limits_{s\to c+}\sw(s)\leq \sw(c)$.
On the other hand, for each $x>\lim\limits_{s\to c+}\sw(s),$ there exists $\os>c$ such that $x>\sw(s)$ for $s\in (c,\os]$. Hence, $[x,+\infty)\times(c,\os]\subseteq\SS$, and $v(x,s)=v(x,\os)$ for $s\in (c,\os].$ By the continuity of $v$, it follows $v(x,c)=v(x,\os)$, which implies $\sw(c)\leq x$. Hence, $\sw(c)\leq\lim\limits_{s\to c+}\sw(s).$ The proof is complete.
\end{proof}

\begin{lemma}\label{thm:xccbounded}
The curve $\sw(c)$ is bounded near $\cc$.
\end{lemma}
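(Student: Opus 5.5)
The plan is to argue by contradiction, using a blow-up of the difference quotient in $c$ at $\cc$, in the spirit of the proof of \lemref{thm:xcccontinuous}. Suppose $\sw(c)$ is unbounded near $\cc$. Then there are $c_n\uparrow\cc$ with $X_n:=\sw(c_n)\to+\infty$. By \propref{proposition:freeboundary} (the inclusion \eqref{propertysw1}, whose proof is Lemma~\ref{lem:N1}), we have $[0,X_n)\times\{c_n\}\subseteq\NS$. Hence \eqref{NS equation} gives $\LL_{c_n}v(\cdot,c_n)-\TT v(\cdot,c_n)+h-c_n=0$ on $[0,X_n)$.

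Define $u_n:=(v(\cdot,c_n)-g)/(\cc-c_n)$. By \eqref{vc} it satisfies $0\le u_n\le(\ell-1)/r$. Subtracting \eqref{g_eq} from the equation for $v(\cdot,c_n)$ gives
\begin{align*}
\LL_{\cc}u_n-\TT u_n=1-v_x(\cdot,c_n)\quad\text{on }[0,X_n).
\end{align*}
Together with \eqref{vx}, this shows $u_n$ is bounded in $W^{1,\infty}([0,L])$ for every $L$. Using Arzel\`a--Ascoli and a diagonal argument, I extract $u_n\to u$ locally uniformly and weakly in $W^{1,\infty}_{loc}$. By \eqref{vxx} we also have $v(\cdot,c_n)\to g$ in $C^1_{loc}$. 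Passing to the limit, the bounded function $u\ge0$ satisfies $\LL_{\cc}u-\TT u=1-g'$ on all of $\R^+$.

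Next I extract two pieces of information from this limit equation. First, monotonicity: differentiating gives $\LL_{\cc}u'-\II u'=-g''\ge0$ by \eqref{g''}. Applying \lemref{lem:CP1} with $\D=\R^+$, $\psi_1=0$, $\psi_2=u'$ gives $u'\ge0$. Hence $u$ is non-decreasing and bounded, with a limit $u(+\infty)=:L$. Second, the value of $L$: letting $x\to\infty$ in the equation, and using $\TT u(x)\to\la L$ (dominated convergence) together with $g'(+\infty)=0$ from \eqref{g5}, I get $(\mu-\cc)u'(x)\to rL-1$. Since $u$ is bounded, this forces $rL=1$, i.e.\ $L=1/r>0$.

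The contradiction comes from showing $u(+\infty)=0$ via \lemref{lem:ucL}. Fix a small $\delta>0$ and apply \eqref{ucL} with $c^*=\cc$, $\Dc^*=\cc-c_n$ (for $n$ large), $c_*=\cc-2\delta$, $\Dc_*=\delta$. This yields
\begin{align*}
u_n(x)\le\frac{v(x,\cc-2\delta)-v(x,\cc-\delta)}{\delta}+2B\delta\le\frac{\cc/r-g(x)}{\delta}+2B\delta,
\end{align*}
where the second inequality uses $v\le\cc/r$ from \eqref{v} and $v(x,\cc-\delta)\ge v(x,\cc)=g(x)$. Letting $n\to\infty$ and then $x\to\infty$, with \eqref{g4}, gives $L\le2B\delta$. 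Since $\delta$ is arbitrary, $L=0$, contradicting $L=1/r$.

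The main obstacle is the passage to the limit equation on the whole half-line. It needs $X_n\to\infty$, so that each compact set eventually lies inside the non-switching interval, together with uniform local $W^{1,\infty}$ bounds on $u_n$. I expect both to follow from \eqref{vx} and \eqref{vc}. The rest is a careful application of \lemref{lem:CP1} to $u'$, which is only $W^{1,\infty}$-regular; if needed I would perform that step at the level of $u_n$ on $[0,X_n)$, as in the proof of \lemref{thm:xcccontinuous}.
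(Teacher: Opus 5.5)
There is a sign error in your equation for $u_n$, and it propagates. Your overall strategy is the paper's: blow up $(v(\cdot,c_n)-g)/(\cc-c_n)$, pass to a limit equation on all of $\R^+$, apply \lemref{lem:CP1} to $u'$, and read off $u(+\infty)$ from the equation.

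Using $\LL_{\cc}w=\LL_{c_n}w+(\cc-c_n)w_x$, subtract \eqref{g_eq} from $\LL_{c_n}v(\cdot,c_n)-\TT v(\cdot,c_n)+h-c_n=0$. This gives
\begin{align*}
\LL_{\cc}u_n-\TT u_n=v_x(\cdot,c_n)-1\quad\text{on }[0,X_n),
\end{align*}
not $1-v_x(\cdot,c_n)$. The paper writes the equivalent form $\LL_{c_n}u_n-\TT u_n=g'-1$, whose right-hand side does not depend on $n$, so it needs no $C^1_{loc}$ convergence of $v(\cdot,c_n)$.

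With the correct sign, the limit satisfies $\LL_{\cc}u-\TT u=g'-1$. Differentiating gives $\LL_{\cc}u'-\II u'=g''\le0$, so \lemref{lem:CP1} yields $u'\le0$, not $u'\ge0$. Evaluating the equation at infinity then forces $u(+\infty)=-1/r$, not $1/r$. Both of these intermediate claims in your write-up are therefore false as stated.

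The repair shortens your proof. Since $u(+\infty)=-1/r<0$ contradicts $u_n\ge0$ directly, which is exactly how the paper concludes, the final appeal to \lemref{lem:ucL} is not needed.

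That step of yours is still correct, and it makes the argument insensitive both to the sign and to the monotonicity step. Once \eqref{ucL} gives $u(x)\to0$ as $x\to\infty$, dominated convergence gives $\TT u(x)\to0$. Together with \eqref{g5}, the correct limit equation then yields $(\mu-\cc)u'(x)\to1$. This contradicts the boundedness of $u$, without ever applying \lemref{lem:CP1} to $u'$.
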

\begin{proof}
Suppose $\sw$, on the contrary, is not bounded near $\cc$. Since it is continuous, there exists a strictly increasing sequence $c_n\to \cc$ such that $\sw(c_n)\to+\infty$.

Let
$$u^n(x)=\frac{v(x,c_n)-v(x,\cc)}{\cc-c_n}\geq 0.$$
Since $v(\cdot,\cc)=g(\cdot)$ satisfies
$\LL_{\cc} g(\cdot)-\TT g(\cdot)+h-\cc=0 $ in $\R^+,$
and
\begin{align*}
\LL_{c_n} v(\cdot,c_n)-\TT v(\cdot,c_n)+h-c_n=0\quad\hbox{in}~~ [0,\sw(c_n)],
\end{align*}
we have
\begin{align}\label{un}
\LL_{c_n} u^n-\TT u^n=g'-1\quad\hbox{in}~~ [0,\sw(c_n)].
\end{align}
By \eqref{vc} and \eqref{un}, $u^n$ is uniformly bounded for all $n$, and $u^n$ is bounded in $W^{1,\infty}([0,L])$ for any $L>0$ when $\sw(c_n)>L$, so it has a subsequence converges to a $u\in W^{1,\infty}(\R^+)$ weekly in $W^{1,\infty}([0,L])$ and uniformly in $C([0,L])$ for any $L>0$ which satisfies
\begin{align}\label{ug}
\LL_{\cc} u-\TT u=g'-1 \quad\hbox{in}~~ \R^+.
\end{align}
Differentiating it leads to
$\LL_{\cc} u'-\II u'=g''\leq 0$ in $\R^+$.
Applying \lemref{lem:CP1} yields $u'\leq 0$.
Since $u$ is bounded, the following limits exist
$$
u(+\infty)=\lim\limits_{x\to+\infty}u(x),\quad \limsup\limits_{x\to+\infty}u'(x)=0.
$$
So there exists a sequence $x_n\to+\infty$ such that $-\frac{1}{n}\leq u'(x_n)\leq 0$.
Noticing \eqref{g5}, by
taking $x=x_n$ in \eqref{ug} and sending $n\to \infty$, we conclude $u(+\infty)=-1/r$. But this contradicts $0\leq u^n\to u$.
\end{proof}

\subsection{Proof of \lemref{lemma:swb}}\label{proof:lemma:swb}
First, the monotonicity of $x\mapsto\swb(x,c)$ follows from \lemref{lem:N} immediately.
We now show the right-continuity property. Since $\swb(x,c)$ is non-decreasing w.r.t. $x$, we only need to prove $$\swb(x,c)\geq c^*:=\limsup\limits_{y\to x+} \swb(y,c).$$
Let $\{y_{n}\}$ be a sequence such that $y_n\to x+$ and $\swb(y_n,c)\to c^*$ as $n\to\infty$. By definition, we have $v(y_n,\swb(y_n,c))=v(y_n,c)$, so it follows from the continuity of $v$ that $v(x,c^*)=v(x,c)$ which by definition gives $ \swb(x,c)\geq c^*$.
\par
We now prove the equation \eqref{v_c=0}. Suppose $c<s:=\swb(x,c)<\cc$.
Thanks to \lemref{lem:ucL} and monotonicity, we have
\begin{align*}
0\leq \frac{v(x,s)-v(x,s+\Ds)}{\Ds}\leq \frac{v(x,s-\Ds)-v(x,s)}{\Ds}+2B\Ds=2B\Ds.
\end{align*}
for $0<\Ds<\min\{s-c,\cc-s\},$ since $v(x,\xi)=v(x,c)$ for all $\xi\in [c, s]$.
Sending $\Ds\to 0+$, we get $v_c(x,s)=0$.

\begin{appendix}

\section{Appendix}\label{sec:proofs}
\subsection{Proof of \lemref{lem:lipchitz}}
\label{proof:lem:lipchitz}
The monotonicity in $c$ and $x$ is due to $\Pi_{x,c}\subseteq \Pi_{x,c'}$ for $c'<c\leq \cc$
and $\Pi_{x,c}\subseteq \Pi_{y,c}$ for $x\leq y$.

Suppose $x\leq y$.
For any $\{(C_t,D_t)\}_{t\geq0}\in \Pi_{y,c}$, let
$D^*_t=(D_t+(y-x))\mathbf{1}_{t\geq 0}$, then $\{(C_t,D^*_t)\}_{t\geq0}\in \Pi_{x,c}$, and 
\begin{align*}
&~\E \Bigg[\int_0^\infty e^{-r t} C_t \dt-\ell \int_0^\infty e^{-r t} \d D_t \Bigg]\\
=&~\E \Bigg[\int_0^\infty e^{-r t} C_t \dt-\ell \int_0^\infty e^{-r t} \d D^*_t
+\ell(y-x) \Bigg]\\
\leq &~V(x,c)+\ell(y-x),
\end{align*}
which implies $V(y,c)\leq V(x,c)+\ell(y-x).$
When $x< 0$, we have that $\{(C_t,D_t)\}_{t\geq0}\in \Pi_{0,c}$ if and only if $\{(C_t,D_t-x)\}_{t\geq0}\in \Pi_{x,c}$, which hence implies a strategy $\{(C_t,D_t)\}_{t\geq0}$ is an optimal for $V(0,c)$ if and only if $\{(C_t,D_t-x)\}_{t\geq0}$ is optimal for $V(x,c)$. This together with the above argument leads to $V(x,c)=V(0,c)+\ell x$ for $x\leq 0$.

To show the concavity w.r.t $x$, suppose $\{(C^x_t,D^x_t)\}_{t\geq0}\in \Pi_{x,c}$
and $\{(C^y_t,D^y_t)\}_{t\geq0}\in \Pi_{y,c}$.
Then one can check $\{(C^*_t,D^*_t)\}_{t\geq0}\in \Pi_{\frac{x+y}{2},c}$ where
$C^*_t=\frac{1}{2}(C^x_t+C^y_t)$ and $D^*_t=\frac{1}{2}(D^x_t+D^y_t)$, so
\begin{align*}
&~\frac{1}{2}\E \Bigg[\int_0^\infty e^{-r t} C^x_t \dt-\ell \int_0^\infty e^{-r t} \d D^x_t \Bigg]\\
&~+\frac{1}{2}\E \Bigg[\int_0^\infty e^{-r t} C^y_t \dt-\ell \int_0^\infty e^{-r t} \d D^y_t \Bigg]\\
=&~\E \Bigg[\int_0^\infty e^{-r t} C^*_t \dt-\ell \int_0^\infty e^{-r t} \d D^*_t \Bigg]
\leq ~ V\Big(\frac{x+y}{2},c\Big).
\end{align*}
By taking supremes over all $\{(C^x_t,D^x_t)\}_{t\geq0}\in \Pi_{x,c}$ and $\{(C^y_t,D^y_t)\}_{t\geq0}\in \Pi_{y,c}$ in order, we get the desired concavity.

Since $C_t\leq \cc$ and $D_t\geq 0$ is non-decreasing for any admissible $\{(C_t,D_t)\}_{t\geq0}\in \Pi_{x,c}$, it follows
\begin{align*}
V(x,c) &\leq
\sup\limits_{\{(C_t,D_t)\}_{t\geq0}\in \Pi_{x,c}}\E \Bigg[\int_0^\infty e^{-r t} \cc \dt \Bigg]=\frac{\cc}{r},
\end{align*}
which gives a global bound for $V$ on $\R\times (-\infty,\cc]$.

To establish a lower bound for $V$ on $\R^+\times(-\infty,\cc]$, we let $D^*_t=\sum_{i=1}^{N_t} Z_i$, then $\E[D^*_t]=\la\ga t$. By integration by parts,
\begin{align*}
\E \Bigg[\int_{0}^\infty e^{-r t} \d D^*_t \Bigg]
=&~\E \Bigg[e^{-r t} D^*_t\Big|_{0-}^{\infty}+r\int_0^\infty e^{-r t} D^*_t \dt\Bigg]=\frac{\la\ga}{r}.
\end{align*}
When $x\geq 0$, one can check that $\{(\cc,D^*_t)\}_{t\geq0}\in \Pi_{x,c}$, so
\begin{align*}
V(x,c) &\geq
\E \Bigg[\int_0^\infty e^{-r t} \cc \dt-\ell\int_{0}^\infty e^{-r t} \d D^*_t \Bigg]= \frac{ \cc-\la\ell\ga}{r}.
\end{align*}
This establishes the boundedness of $V$ on $\R^+\times(-\infty,\cc]$ and
completes the proof of \lemref{lem:lipchitz}.

\subsection{Proof of \lemref{lem:CP1}}
\label{proof:lem:CP1}
We assume the first case holds, that is, $\psi_1,\;\psi_2\in W^{1,\infty}(\R^+)$ satisfy
\begin{align*}
\begin{cases}
\LL_c \psi_1-\JJ \psi_1+H(x,\psi_1)\leq\LL_c \psi_2-\JJ \psi_2+H(x,\psi_2) {\rm\quad a.e.\;in}~~ \D,\medskip\\
\psi_1\leq \psi_2{\rm\quad in}~~ \R^+\setminus\D.
\end{cases}
\end{align*}
Let $\phi(x)=e^{\frac{r}{\mu-c}x}.$ Then
\begin{align}\label{Lphi1}
\LL_c \phi(x)-\JJ \phi(x) \geq\LL_c \phi(x)-\la\phi(x)=0,\quad x\in\R^+.
\end{align}
The claim will follow if we can show, for any $\ep>0$,
\begin{align*}
\psi_1 \leq \psi_2+\ep \phi \quad \text{in}~~ \mathbb{R}^+.
\end{align*}
We now establish the above by contradiction. Assume, on the contrary,
$$
M:=\sup_{x\in \mathbb{R}^+}\big(\psi_1-\psi_2-\ep \phi\big)(x)>0
$$
for some $\ep>0$. Clearly, it implies
$
M=\sup_{x\in \mathbb{R}^+}\big(\psi_1-\psi_2-\ep \phi\big)^+(x).
$

Note that $\psi_1-\psi_2-\ep \phi$ tends to $-\infty$ as $x\to+\infty$, so it attains its maximum $M$ at some point $x_0\in \R^+$. By continuity and $\frac{r }{r+\la}M<M$, there exists a sufficiently small $\delta>0$ such that
\begin{align}\label{CP12}
\psi_1-\psi_2-\ep \phi >\frac{r }{r+\la}M \quad \text{in}~~ [x_0,x_0+\delta].
\end{align}
This implies $\psi_1> \psi_2$, so $[x_0,x_0+\delta] \subset \D$, whence
\begin{align}\label{CP_eq}
\LL_c \psi_1-\JJ \psi_1+H(x,\psi_1)\leq\LL_c \psi_2-\JJ \psi_2+H(x,\psi_2) \quad \text{a.e. in}~~ [x_0,x_0+\delta].
\end{align}
Moreover, since $H(x,y)$ is non-decreasing in $y$ and $\psi_1> \psi_2$, we have
\begin{align}\label{CP_H}
H(x,\psi_1(x))\geq H(x,\psi_2(x)), \quad x\in [x_0,x_0+\delta].
\end{align}
Since both $\LL_c$ and $\JJ$ are linear operators,
we deduce from \eqref{Lphi1}, \eqref{CP_eq}, \eqref{CP_H} and \eqref{J} that
$$
\LL_c (\psi_1-\psi_2-\ep \phi) \leq \JJ (\psi_1-\psi_2-\ep \phi)\leq
\la \sup_{y\in \mathbb{R}^+} \big(\psi_1-\psi_2-\ep \phi\big)^+(y)=
\la M \quad \text{a.e. in}~~ [x_0,x_0+\delta].
$$
It follows
$$
-(\mu-c) (\psi_1-\psi_2-\ep \phi)' \leq-(r+\la)(\psi_1-\psi_2-\ep \phi)+\la M <0 \quad \text{a.e. in}~~ [x_0,x_0+\delta],
$$
where the last inequality is due to \eqref{CP12}.
Thus, $(\psi_1-\psi_2-\ep \phi)'>0$ a.e. in $[x_0,x_0+\delta]$, contradicting that $x_0$ is a maximizer of $(\psi_1-\psi_2-\ep \phi)$. This completes the proof for the first case.

We now consider the second case:
\begin{align*}
\min\big\{\LL_c \psi_1-\JJ \psi_1+H(x,\psi_1), \; \psi_1-\eta\big\}\leq
\min\big\{\LL_c \psi_2-\JJ \psi_2+H(x,\psi_2), \; \psi_2-\eta\big\}~~{\rm a.e.\;in}~~ \R^+.
\end{align*}
Suppose
$\D=\{\psi_1> \psi_2\}$ is not empty. Then the above leads to
\begin{align*}
\LL_{c} \psi_1-\JJ \psi_1+H(x,\psi_1)\leq\LL_{c} \psi_2-\JJ \psi_2+H(x,\psi_2) {\rm\quad a.e.\;in}~~ \D.
\end{align*}
This means $\psi_1$ and $\psi_2$ satisfy the first case, so by the previous result, we have $\psi_1\leq \psi_2$ in $\R^+$. But this clearly contradicts to that $\D=\{\psi_1> \psi_2\}$ is not empty. This completes the proof of \lemref{lem:CP1}.

\subsection{Proof of \lemref{lem:g}}
\label{proof:lem:g}
The existence of a $C^1$-smooth bounded solution to the equation \eqref{g_eq} can be proved by constructing the approximation problem in bounded interval and applying the Leray-Schauder fixed point theorem (see \cite{Ev16} Theorem 4 on page 539), we leave it to the interested readers.
The uniqueness of the solution is a consequence of \lemref{lem:CP1}.

Next, we prove \eqref{g_b}.
Let $\phi=\frac{\cc-\la\ell\ga}{r}$ and $\Phi=\frac{\cc}{r}$.
Since
$$0\leq h(x)=\la\ell \int_x^\infty (1-F(y))\dy \leq \la\ell \int_0^\infty (1-F(y))\dy=\la\ell\ga,$$
it follows
\begin{align*}
\LL_{\cc} \phi-\TT \phi+h-\cc=h-\la\ell\ga\leq 0 \leq h=\LL_{\cc} \Phi-\TT \Phi+h-\cc \quad {\rm in}~~ \R^+.
\end{align*}
Applying \lemref{lem:CP1}, we obtain \eqref{g_b}.

We come to prove \eqref{g'}. Differentiating \eqref{g_eq} we have
\begin{align}\label{g'_eq}
\LL_{\cc} \big(g'\big)-\II \big(g'\big)=\la\ell (1-F) \quad {\rm in}~~ \R^+,
\end{align}
Let $\psi=0$ and $\Psi=\ell$, then
\begin{align*}
\LL_{\cc} \psi-\II \psi=0 \leq \la\ell (1-F) \leq r \ell+\la\ell (1-F)
=\LL_{\cc} \Psi-\II \Psi \quad {\rm in}~~ \R^+.
\end{align*}
Applying \lemref{lem:CP1} to \eqref{g'_eq}, we have \eqref{g'}.

We now prove \eqref{g''}.
Differentiating \eqref{g'_eq} yields
\begin{align*}
\LL_{\cc} \big(g''\big)-\II \big(g''\big)=\la ( g'(0)-\ell )p \quad {\rm a.e.\; in}~~ \R^+.
\end{align*}
Since $g'$, $g''$ and $p$ are bounded in $\R^+$, it follows $g''\in W^{1,\infty}(\R^+)$. Also, since $g'\leq \ell$ and $\la p\geq 0$, we have
\begin{align*}
\LL_{\cc} \big(g''\big)-\II \big(g''\big) \leq 0 \quad {\rm a.e.\; in}~~ \R^+.
\end{align*}
Applying \lemref{lem:CP1}, we obtain \eqref{g''}.

Finally, we prove \eqref{g4} and \eqref{g5}. From \eqref{g_b}-\eqref{g''} we know the limits $g(+\infty)$ and $g'(+\infty)$ exist and \eqref{g5} holds. Let $x\to+\infty$ in \eqref{g_eq} and using the dominated convergence theorem we get \eqref{g4} and complete the proof of \lemref{lem:g}.

\subsection{Proof of \thmref{theorem:boundaryvalue}}
\label{proof:theorem:boundaryvalue}

The proof consists of two steps.
\paragraph{Step 1 (Upper bound: $g(x)\geq V(x,\cc)$).}

Let $x\in\R^+$ and $\{(C_t,D_t)\}_{t\geq0}\in \Pi_{x,\cc}$ be any admissible control.
In the boundary case, we have $C_t\equiv \cc$.
For any subset $\mathbb{K}$ of $\R$ with zero Lebesgue measure,
one can show through a change of measure argument that $\int_{0}^{T}\P(X_t\in \mathbb{K})\dt=0$ for any $T>0$.
Applying the general It\^o's formula (see \cite{Pr90} Theorem 33 on page 81) to $e^{-r t}g(X_t)$ and then taking expectation yields
\begin{align}
g(x)
=&~
\E \Bigg[e^{-c T}g(X_{T})\Bigg]-\E \Bigg[\sum_{0\leq t\leq T}e^{-r t}\Big[g (X_t)-g(X_{t-})\Big]\Bigg]\nn\\
&~-\E \Bigg[\int_0^{T}e^{-r t}\Big[(\mu-\cc) g' -r g \Big](X_{t-}) \dt\Bigg]-\E \Bigg[\int_0^{T}e^{-r t} g'(X_{t-})\d D^c_t\Bigg].\label{gE}
\end{align}
Since $\Delta D_t=X_t-X_{t-}+Z_{N_t} \Delta N_t$ and $X_{t}$, $\Delta D_t\geq 0$,
we get
$$\Delta D_t\geq \Delta' D_t:=(Z_{N_t} \Delta N_t-X_{t-})^+=(Z_{N_t} -X_{t-})^+ \Delta N_t,$$
where the second equality holds since $\Delta N_t \in \{0,1\}$ and $X_{t-}\geq 0$.
As a consequence, we have
$$X_t-(\Delta D_t-\Delta' D_t)=X_{t-} - Z_{N_t} \Delta N_t + \Delta' D_t
=X_{t-}(1-\Delta N_t)+(X_{t-}-Z_{N_t})^+\Delta N_t.$$
The second expectation on the right-hand side of \eqref{gE} can then be decomposed as
\begin{align*}
&\E \Bigg[\sum_{0\leq t\leq T}e^{-r t}\Big[g(X_t)
-g\Big(X_t-(\Delta D_t-\Delta' D_t)\Big)\Big]\Bigg]\\
&+\E \Bigg[\sum_{0\leq t\leq T}e^{-r t}\Big[g\Big(X_{t-}(1-\Delta N_t)+(X_{t-}-Z_{N_t})^+\Delta N_t\Big)-g(X_{t-})\Big]\Bigg].
\end{align*}
Since $\Delta N_t \in \{0,1\}$, the second term in above can be rewritten as
\begin{align*}
&~\E \Bigg[\sum_{0\leq t\leq T}e^{-r t}\Big[g\Big(X_{t-}(1-\Delta N_t)+(X_{t-}-Z_{N_t})^+\Delta N_t\Big)- g(X_{t-})\Big]\Bigg]\\[2mm]
=&~\E \Bigg[\sum_{0\leq t\leq T}e^{-r t}\Big[g\Big((X_{t-}-Z_{N_t})^+\Big)- g(X_{t-})\Big] \Delta N_t\Bigg]\\
=&~\E \Bigg[\int_0^{T} e^{-r t}\Big[g\Big((X_{t-}-Z_{N_t})^+\Big)-g(X_{t-})\Big] \la \dt\Bigg]\\
=&~\E \Bigg[\int_0^{T} e^{-r t}\Big(\E_{t,X_{t-}}\Big[g\Big((X_{t-}-Z_{N_t})^+\Big)\Big]-g(X_{t-})\Big) \la \dt\Bigg]\\
=&~\E \Bigg[\int_0^{T}e^{-r t}\Big[\TT g(X_{t-} ) - \la g(X_{t-})\Big] \dt\Bigg],
\end{align*}
where the second equality follows from the martingale property of the compensated Poisson process $\{N_t- \la t\}_{t\geq 0}$ (see \cite{Sh04}), and the third equality follows from the law of iterated expectations. Substituting these expressions into \eqref{gE} yields
\begin{align}\nonumber
g(x)
=&~
\E \Bigg[e^{-r T}g(X_{T})\Bigg]
-\E \Bigg[\sum_{0\leq t\leq T}e^{-r t}\left[g(X_t)-g\Big(X_t-(\Delta D_t-\Delta' D_t)\Big)\right]\Bigg]\\\label{gE1}
& + \E \Bigg[\int_0^{T}e^{-r t} \Big(\LL_{\cc} g - \TT g\Big)(X_{t-},t) \dt\Bigg]
-\E \Bigg[\int_0^{T}e^{-r t} g'(X_{t-})\d D^c_t\Bigg].
\end{align}
Observe that
\begin{align*}
\ell\E \Bigg[\sum_{0\leq t\leq T}e^{-r t}\Delta' D_t \Bigg]
=&~\ell \E \Bigg[\sum_{0\leq t\leq T}e^{-r t} (Z_{N_t}-X_{t-})^+ \Delta N_t\Bigg]\\
=&~\ell \E \Bigg[\int_0^T e^{-r t} (Z_{N_t}-X_{t-})^+ \la \dt\Bigg]\\
=&~\ell \la \E \Bigg[\int_0^T e^{-r t} \E_{t,X_{t-}}\Big[(Z_{N_t}-X_{t-})^+\Big] \dt\Bigg]= \E\Bigg[\int_0^T e^{-r t} h(X_{t-})\dt\Bigg].
\end{align*}
Then \eqref{gE1} becomes 
\begin{align}\nonumber
g(x)= &~\E \Bigg[e^{-r T}g(X_{T})\Bigg]-\E \Bigg[\sum_{0\leq t\leq T}e^{-r t}\left[g(X_t)-g\Big(X_t-(\Delta D_t-\Delta' D_t)\Big)\right]\Bigg] \\\nonumber
&~+ \E \Bigg[\int_0^{T}e^{-r t} \Big(\LL_{\cc} g - \TT g + h - \cc \Big)(X_{t-}) \dt\Bigg]-\E \Bigg[\int_0^{T}e^{-r t} g'(X_{t-})\d D^c_t\Bigg]\\\label{gE2}
&~- \ell \E \Bigg[\sum_{0\leq t\leq T}e^{-r t}\Delta' D_t \Bigg]
+ \E\Bigg[\int_0^T e^{-r t} \cc \dt\Bigg].
\end{align}
The definition of $g$ implies that the third expectation vanishes. Combining $0\leq g' \leq \ell$ and $\Delta D_t - \Delta' D_t \geq 0$, we obtain
\begin{align*}
g(x)\geq \E \Bigg[e^{-r T}g(X_{T})\Bigg] + \E\Bigg[\int_0^T e^{-r t} \cc \dt -\int_0^T e^{-r t} \ell \d D_t\Bigg]
\end{align*}
Since $g$ is bounded in $\R^+$, by the dominated convergence theorem and monotone convergence theorem, sending $T \to +\infty$ yields
\begin{align*}
g(x)
\geq \E\Bigg[\int_0^\infty e^{-r t} \cc \dt -\int_0^\infty e^{-r t} \ell \d D_t\Bigg].
\end{align*}
Therefore, we conclude $g(x)\geq V(x,\cc)$.

\paragraph{Step 2 (Lower bound and optimality: $g(x)\leq V(x,\cc)$).}
We now prove the opposite inequality and verify that the candidate capital injection strategy \begin{align*}
\overline{D}_t=\sup\limits_{0\leq s\leq t}\Big(\sum_{i=1}^{N_s} Z_i-x-(\mu-\cc)s\Big)^+,~~t\geq 0.
\end{align*}
that proposed in \thmref{theorem:boundaryvalue} is indeed optimal.
Since $\overline{D}_t$ is non-decreasing and the surplus
$$\overline{X}_t=x + (\mu-\cc)t - \sum_{i=1}^{N_t} Z_i + \overline{D}_t \geq 0, ~~ t\geq 0, $$
we see $\{(\cc, \overline{D}_t)\}_{t\geq0}\in \Pi_{x,c}$.
We next show
\begin{align}\label{DD}
\Delta \overline{D}_t = \Delta' \overline{D}_t,~~ t \geq 0.
\end{align}
Indeed, there are two cases.
\begin{itemize}
\item $\Delta \overline{D}_t > 0$.
In this case, we have
$\overline{D}_t = \sum_{i=1}^{N_t} Z_i - x - (\mu - \bar{c})t$, so
\begin{align*}
\Delta \overline{D}_t =&~ (\overline{D}_t - \overline{D}_{t-})^+ \\
=&~ \Bigg[\Big(\sum_{i=1}^{N_t} Z_i-x-(\mu-\cc) t \Big)- \Big(\overline{X}_{t-} - x - (\mu-\cc) t + \sum_{i=1}^{N_{t-}} Z_i\Big) \Bigg]^+ \\
=&~ (Z_{N_t}\Delta N_t-\overline{X}_{t-})^+ = \Delta' \overline{D}_t.
\end{align*}

\item $\Delta \overline{D}_t =0$. In this case, we have $\overline{X}_t - \overline{X}_{t-} = -Z_{N_t} \Delta N_t$, so
$$
\Delta' \overline{D}_t = (Z_{N_t} \Delta N_t - \overline{X}_{t-})^+ = (-\overline{X}_t)^+ = 0 = \Delta \overline{D}_t.
$$
\end{itemize}
Thus, we proved \eqref{DD}.
Moreover, it is straightforward to verify that $\overline{D}_t^c \equiv 0$.
Taking these into \eqref{gE2} yields
\[
g(x) = \E \left[ e^{-rT} g(\overline{X}_T) \right] + \E \left[ \int_0^T e^{-rt} \bar{c} \dt - \int_0^T e^{-rt} \ell \d \overline{D}_t \right].
\]
By sending $T \to \infty$, we obtain
\[
g(x) = \E \left[ \int_0^\infty e^{-rt} \bar{c} \dt - \int_0^\infty e^{-rt} \ell \, \d \overline{D}_t \right].
\]
Now we conclude $\{(\cc, \overline{D}_t)\}_{t\geq0}$ is an optimal strategy and $g$ is the value function for the boundary case. This completes the proof of \thmref{theorem:boundaryvalue}.

\subsection{Proof of \lemref{lem:CP00}}
\label{proof:lem:CP00}
Suppose $v,\;w\in \A_{\infty}$ are two bounded strong solutions to the VI \eqref{v_pb00},
and $v>w$ at some point in $\Q^{+}_{\infty}$.
Then by continuity, there exist $\ep>0$ and $\uc<\cc$ such that
$$M:=\sup\limits_{(x,c)\in \R^+\times [\uc,\cc]}\big((1+\ep)v-w-\ep \phi\big)(x,c)>0,$$
where $\phi(x)=x+a$ with $a>(\cc+\mu-\uc)/r$.
Clearly,
$$M=\sup\limits_{(x,c)\in \R^+\times [\uc,\cc]}\big((1+\ep)v-w-\ep \phi\big)^+(x,c).$$

Note that $(1+\ep)v-w-\ep \phi$ is continuous and tends to $-\infty$ as $x\to+\infty$, and
$\big((1+\ep)v-w-\ep \phi\big)(x,\cc)=\ep (g(x)- \phi(x))\leq \ep (\cc/r- a)<0$ for $x\in\R^+$,
so $(1+\ep)v-w-\ep \phi$ attains its maximum value at some point $(x_0,c_0)\in \R^+\times[\uc,\cc)$. It hence follows
\begin{align}\label{px<=0}
\p_x \big((1+\ep)v-w-\ep \phi\big)(x_0,c_0) \leq 0.
\end{align}
This together with $\mu\geq c$ and
$$\TT \big((1+\ep)v-w-\ep \phi\big)(x_0,c_0)\leq \la \big((1+\ep)v-w-\ep \phi\big)^+(x_0,c_0)=\la M,$$
implies
\begin{multline}\label{estimate3}
\qquad\LL_{c_0} \big((1+\ep)v-w-\ep \phi\big)(x_0,c_0)-\TT \big((1+\ep)v-w-\ep \phi\big)(x_0,c_0)\\
\geq (r+\la)\big((1+\ep)v-w-\ep \phi\big)(x_0,c_0)-\la M=rM >0.\qquad
\end{multline}
On the other hand, we may assume $c_0$ satisfies
\begin{align*}
\big((1+\ep)v-w-\ep \phi\big)(x_0,c)<M=\big((1+\ep)v-w-\ep \phi\big)(x_0,c_0), \quad c_0<c\leq \cc.
\end{align*}
Since $w+\ep \phi$ is non-increasing w.r.t. $c$, it follows
\begin{align*}
v(x_0,c_0)> v(x_0,c),\quad c_0<c\leq \cc.
\end{align*}
Then by \defref{def:solution}, we have
$$\LL_{c_0} v(x_0,c_0)-\TT v(x_0,c_0)+h(x_0)-c_0=0\;\hbox{ or }\; v_x(x_0,c_0)=\ell.$$
Notice
\begin{align*}
\LL_{c_0} \phi(x_0)-\TT \phi(x_0) &\geq\LL_{c_0} \phi(x_0)-\la\phi(x_0)\nn\\
&=rx_0+ra-\mu+c_0\geq ra-\mu+c_0> \cc.
\end{align*}
If $\LL_{c_0} v(x_0,c_0)-\TT v(x_0,c_0)+h(x_0)-c_0=0$,
using $\LL_c w-\TT w+h-c\geq 0$ and above, we get
$$
\LL_{c_0} \big((1+\ep)v-w-\ep \phi\big)(x_0,c_0)-\TT \big((1+\ep)v-w-\ep \phi\big)(x_0,c_0)<-\ep (h(x_0)-c_0+\cc)< 0,
$$
which contradicts \eqref{estimate3}.
If $v_x(x_0,c_0)=\ell $, combining $w_x(x_0,c_0) \leq \ell$ and $\phi_x(x_0,c_0)=1$, we have
$\p_x\big((1+\ep)v-w-\ep \phi\big)(x_0,c_0)\geq \ep (\ell-1)>0$, which contradicts \eqref{px<=0}. This completes the proof of \lemref{lem:CP00}.

\end{appendix}

 \bibliographystyle{plainnat}

\renewcommand{\baselinestretch}{1.2}

\end{document}